\newcommand{\ip}[2]{\langle #1 , #2 \rangle}    
\newcommand{\rad}{\mathfrak{r}}
\renewcommand{\v}{\mathfrak{v}}
\newcommand{\f}{\mathfrak{f}}
\renewcommand{\d}{\mathfrak{d}}
\def\aa{{\mathbf{a}}}
\def\bb{{\mathbf{b}}}
\def\dd{{\mathbf{d}}}
\def\ee{{\mathbf{e}}}
\def\gg{{\mathbf{g}}}
\def\ss{{\mathbf{s}}}
\def\uu{{\mathbf{u}}}
\def\vv{{\mathbf{v}}}
\def\ww{{\mathbf{w}}}
\def\xx{{\mathbf{x}}}
\def\yy{{\mathbf{y}}}
\def\zz{{\mathbf{z}}}
\def\aalpha{\boldsymbol{\alpha}}
\def\ssig{{\boldsymbol{\sigma}}}
\def\llambda{{\boldsymbol{\lambda}}}
\def\0{\mathbf{0}}
\def\1{\mathbf{1}}
\def\SS{{\mathbf{S}}}
\newcommand{\N}{\mathbb{N}}
\newcommand{\R}{\mathbb{R}}
\newcommand\cC{\mathcal{C}}
\newcommand\cO{{\ensuremath{\mathcal{O}}}}
\newcommand\cV{{\ensuremath{\mathcal{V}}}}
\DeclareMathOperator{\vertices}{vert}
\DeclareMathOperator{\relint}{ri}
\DeclareMathOperator{\relbnd}{rbd}
\DeclareMathOperator{\dist}{dist}
\DeclareMathOperator{\faces}{faces}
\DeclareMathOperator{\argmax}{arg\,max}
\DeclareMathOperator{\argmin}{arg\,min}
\DeclareMathOperator{\conv}{conv}
\DeclareMathOperator{\diam}{diam}
\DeclareMathOperator{\rank}{rank}
\theoremstyle{plain} \numberwithin{equation}{section}
\newtheorem{theorem}{Theorem}[section]
\numberwithin{theorem}{section}
\newtheorem{lemma}[theorem]{Lemma}
\newtheorem{corollary}[theorem]{Corollary}
\newtheorem{proposition}[theorem]{Proposition}
\newtheorem{definition}[theorem]{Definition}
\newcommand{\hrulealg}[0]{\vspace{1mm} \hrule \vspace{1mm}}
\begin{document}

\title{Fast convergence of Frank-Wolfe algorithms on polytopes }

\author{ Elias Wirth\thanks{Technical University of Berlin} \and Javier Pe\~na\thanks{Carnegie Mellon University} \and Sebastian Pokutta\thanks{Technical University of Berlin}}

\maketitle

\abstract{
We provide a template to derive  affine-invariant convergence rates for the following popular versions of the Frank-Wolfe algorithm on polytopes: vanilla Frank-Wolfe, Frank-Wolfe with away steps, Frank-Wolfe with blended pairwise steps, and Frank-Wolfe with in-face directions. Our template shows how the convergence rates follow from two affine-invariant properties of the problem, namely, {\em error bound} and {\em extended curvature}.  These properties depend solely on the polytope and objective function but not on any affine-dependent object like norms.  For each one of the above algorithms, we derive rates of convergence ranging from sublinear to linear depending on the degree of the error bound.
}

\section{Introduction}
We consider several popular versions of the {\em Frank-Wolfe algorithm}, also known as conditional gradient method, for optimization problems of the form
\begin{equation}\label{eq.opt}
    \min_{\xx\in\cC} f(\xx),
\end{equation}
where $\cC\subseteq \R^n$ is a polytope  and $f\colon \cC\to \R$ is a convex function differentiable in an open set containing $\cC$.  
At iteration $t$,  the {\em vanilla} Frank-Wolfe algorithm updates the current iterate $\xx^{(t)}\in \cC$ to
\[
\xx^{(t+1)} = \xx^{(t)} + \eta_t (\vv^{(t)}-\xx^{(t)})
\]
for $\vv^{(t)} = \argmin_{\yy\in \cC} \ip{\nabla f(\xx^{(t)})}{\yy}$ and some stepsize $\eta_t \in [0,1]$.  
This algorithmic scheme is attractive when projections onto $\cC$ are computationally expensive but a linear minimization oracle for the set $\cC$ is available.

When the pair $(\cC,f)$ satisfies a mild {\em curvature} property, the vanilla Frank-Wolfe algorithm has convergence rate $\cO(t^{-1})$, that is, 
\[f(\xx^{(t)}) - \min_{\xx\in\cC} f(\xx) = \cO(t^{-1}),\] 
see, for example,~\cite{jaggi2013revisiting}.  There are various settings that yield faster convergence rates~\cite{guelat1986some, levitin1966constrained, demianov1970approximate, dunn1979rates, garber2015faster, kerdreux2021projection, wirth2023acceleration, pena2023affine, wirth2023accelerated}.
However, in the setting often referred to as {\em Wolfe's lower bound}~\cite{wolfe1970convergence}, the convergence rate of the Frank-Wolfe algorithm cannot be faster than $\Omega(t^{-1-\epsilon})$ for any $\epsilon > 0$. The Wolfe's setting concerns the case when $\cC$ is a polytope, the optimal solution set 
lies in the relative interior of some proper face of $\cC$, and the stepsizes are chosen via exact line-search or short-step.  
 
Since optimization over polytopes is central to Frank-Wolfe research \cite{carderera2021cindy,tibshirani1996regression,joulin2014efficient}, 
this limitation has motivated  the development of linearly convergent variants for settings similar to Wolfe's \cite{wolfe1970convergence, holloway1974extension, lacoste2015global, garber2016linear, tsuji2022pairwise}. In contrast to the update in the vanilla Frank-Wolfe algorithm, these variants perform an update of the form
\[
\xx^{(t+1)} = \xx^{(t)} + \eta_t \dd^{(t)}
\]
for a more flexible choice of search direction $\dd^{(t)}$ that relies on additional information about the current iterate $\xx^{(t)}$.  This flexibility facilitates linear convergence rates at the cost of some computational overhead. 

Some popular variants are the Frank-Wolfe with away steps~\cite{beck2017linearly,lacoste2015global}, Frank-Wolfe with blended pairwise steps~\cite{tsuji2022pairwise}, and Frank-Wolfe with in-face directions~\cite{freund2017extended,garber2020revisiting,garber2016linear}.  These variants have stronger convergence properties than the vanilla  Frank-Wolfe algorithm.  In particular, under suitable smoothness and strong convexity assumptions, they have linear convergence, even in Wolfe's setting.  However, the linear convergence results derived in~\cite{beck2017linearly,lacoste2015global,tsuji2022pairwise,garber2016linear,garber2020revisiting} depend on the dimension of $\cC$, rely on additional difficult-to-verify assumptions, or are affine-dependent.  Furthermore, all of these papers consider only two regimes of convergence: a linear rate when suitable favorable assumptions hold, or the iconic rate $\cO(t^{-1})$ of first-order methods when only milder assumptions hold.

This paper develops a framework to derive convergence rates 
for the above variants of the Frank-Wolfe algorithm that circumvents the main limitations of previous approaches.  Our framework is founded on two key properties of the pair $(\cC,f)$, namely an {\em extended curvature property} and an {\em error bound property}.  We derive rates of convergence ranging from sublinear to linear depending on the degree of the error bound property.  Our approach yields the following main contributions.  First, we provide a common affine-invariant template to derive convergence rates for vanilla Frank-Wolfe, as well as the Frank-Wolfe variants that include away, blended pairwise, and in-face directions.  Second, we establish new convergence rates that interpolate between the $\cO(t^{-1})$ and linear rates considered in~\cite{beck2017linearly,lacoste2015global,tsuji2022pairwise,garber2016linear}.   The specific rate of convergence is determined by the degree of the error bound property in the same spirit developed in the recent articles~\cite{pena2023affine,wirth2023accelerated} for vanilla Frank-Wolfe.
Third, we sharpen the results in~\cite{lacoste2015global} by providing convergence results in terms of the local {\em facial distance}, in contrast to the dependence on the pyramidal width.  The former depends only on local geometry of the optimal face of $\cC$ whereas the latter depends on the global geometry of the entire polytope $\cC$.  Fourth, we establish dimension-independent rates for the aforementioned variants of Frank-Wolfe without requiring any additional difficult-to-verify assumptions such as strict complementarity~\cite{garber2020revisiting,wolfe1970convergence}.  Fifth, our developments enables us to subsume and extend the work of Garber and Meshi~\cite{garber2016linear} for simplex-like polytopes.

\subsection{Outline}\label{sec.outline}
The paper is structured as follows. In Section~\ref{sec.prelims}, we introduce the preliminaries and notation used throughout the paper.   In particular, we introduce the two key building blocks underlying our approach, namely, the extended curvature and error bound properties.
Section~\ref{sec.algorithms} presents our main developments, that is, the convergence results for vanilla Frank-Wolfe and Frank-Wolfe variants. The main convergence results are Theorem~\ref{thm.vanilla} for vanilla Frank-Wolfe, Theorem~\ref{thm.variants} for away-step and blended pairwise Frank-Wolfe, and Theorem~\ref{thm.in_face_variants} for in-face Frank-Wolfe.  
We deliberately use a similar format in each of the subsections of Section~\ref{sec.algorithms} to highlight how the convergence rates in all cases are a consequence of the {\em curvature} and {\em error bound} properties.

When the polytope $\cC$ has a standard form description of the form 
$\{\xx\in\R^n \mid A\xx = \bb, \xx\geq 0\}$, Theorem~\ref{thm.in_face_variants_polytope} gives a stronger version of Theorem~\ref{thm.in_face_variants}.   When the polytope $\cC$ is a simplex-like polytope, we provide a specialized version of the in-face Frank-Wolfe and establish a much stronger convergence result, namely Theorem~\ref{thm.in_face_variants.slp}.

In Section~\ref{sec.facial_geometry}, we develop some geometric properties of the facial structure of polytopes.  These properties are the crux of sufficient conditions for the error bound property to hold.

The proofs of  Theorem~\ref{thm.vanilla}, Theorem~\ref{thm.variants},  Theorem~\ref{thm.in_face_variants}, Theorem~\ref{thm.in_face_variants_polytope}, and Theorem~\ref{thm.in_face_variants.slp} are all instantiations of the following common template:
\[
\text{$L$-curvature and } (\mu,\theta)\text{-error bound} \Rightarrow \cO(t^{1/(2\theta-1)}) \text{ rate of convergence.}
\]
The expression $\cO(t^{1/(2\theta-1)})$ in the above template is to be interpreted as linear rate of convergence in the special case $\theta = 1/2$. 
The degree $\theta\in[0,1/2]$ of the error bound property yields a rate of convergence ranging from the sublinear rate $\cO(t^{-1})$ when $\theta = 0$ to a linear rate when $\theta = 1/2$.  The main constant hidden in the $\cO(t^{1/(2\theta-1)})$ expression is the ratio $L/\mu^{2\theta}$ where $L$ and $\mu$ are parameters associated to the curvature and error bound properties.  
As we detail in Section~\ref{sec.prelims}, the error bound property is defined in terms of a {\em distance function} $\d:\cC\times \cC\rightarrow [0,1]$.  The suitable distance function for each version of the Frank-Wolfe algorithm is as summarized in Table~\ref{table.thetable}.  The radial $\rad$, vertex $\v$, and face $\f$ distance functions are defined in Section~\ref{sec.algorithms}.

\begin{table}[!t]
\captionsetup[table]{justification=centering}
\centering
\caption{Frank-Wolfe version, suitable distance, and convergence result}
\begin{tabular}{c|c|c}
\hline
Frank-Wolfe version & Distance function $\d$ & Convergence result\\
\hline
vanilla & radial distance $\rad$ & Theorem~\ref{thm.vanilla} \\
away-step and blended pairwise & vertex distance $\v$& Theorem~\ref{thm.variants}\\
in-face & face distance $\f$& Theorem~\ref{thm.in_face_variants}\\
in-face and $\cC$ in standard form & face distance $\f$& Theorem~\ref{thm.in_face_variants_polytope}\\
in-face and $\cC$ simplex-like & face distance $\f$& Theorem~\ref{thm.in_face_variants.slp}\\
\hline
\end{tabular}\label{table.thetable}
\end{table}

\section{Preliminaries}\label{sec.prelims}
Throughout $\cC\subseteq \R^n$ denotes a nonempty polytope equipped with a {\em linear oracle} that computes the mapping
\[
\gg \mapsto \argmin_{\yy\in\cC} \ip{\gg}{\yy}.
\]
Furthermore, we shall assume that the minimizer returned by the above linear oracle is a vertex of $\cC$.

We shall also assume that $f\colon \cC\to \R$ is a convex function differentiable in an open set containing $\cC$.
For the optimization problem \eqref{eq.opt}, we denote the \emph{optimal value} by
\[
    f^* = \min_{\xx\in\cC} f(\xx)
\]
and the \emph{set of optimal solutions}
by
\[
    X^* : = \{\xx\in\cC \mid f(\xx) = f^*\}.
\]

Recall the following popular smoothness and H\"olderian error bound properties. These concepts are defined in terms of a norm on $\R^n$ and hence are typically  affine-dependent.
\begin{definition}[Smoothness]\label{def.smoothness} Let $\R^n$ be endowed with a norm $\|\cdot\|$, let $\cC \subseteq \R^n$ be a polytope, and let $f\colon \cC \to \R$ be convex and differentiable in an open set containing $\cC$. For $L>0$ we say that $f$ is \emph{$L$-smooth} over $\cC$ with respect to $\|\cdot\|$ if for all $\xx, \yy\in \cC$, it holds that
\[
        f(\yy) \leq f(\xx) + \langle \nabla f(\xx), \yy - \xx\rangle + \frac{L}{2}\|\xx-\yy\|^2.
\]
\end{definition}

For $\|\cdot\|,\cC,f$ as in Definition~\ref{def.smoothness} it is known and easy to show that $f$ is $L$-smooth over $\cC$ with respect to $\|\cdot\|$ if $\nabla f$ is $L$-Lipschitz continuous on $\cC$, that is, if
\[
\|\nabla f(\yy)-\nabla f(\xx)\|^* \le L \|\yy-\xx\| \text{ for all } \xx,\yy\in \cC
\]
where $\|\cdot\|^*$ denotes the dual norm of $\|\cdot\|$, that is, the norm on $\R^n$ defined as
\begin{equation}\label{eq.dual.norm}
\|\uu\|^*:=\max_{\|\xx\|\le 1} \ip{\uu}{\xx}.
\end{equation}

\begin{definition}[Hölderian error bound]\label{def.heb}
Let $\R^n$ be endowed with a norm $\|\cdot\|$, let $\cC \subseteq \R^n$ be a polytope, and let $f\colon \cC \to \R$ be convex and differentiable in an open set containing $\cC$.  For $\mu> 0$ and $\theta \in (0, 1/2]$ we say that $(\cC,f)$ satisfies the \emph{$(\mu, \theta)$-Hölderian error bound} over $\cC$ with respect to $\|\cdot\|$ if for all $\xx\in \cC$ it holds that
\[
        \left(\frac{f(\xx)-f^*}{\mu}\right)^\theta \geq \min_{\xx^*\in X^*} \|\xx^*-\xx\|.
\]
\end{definition}

For $\|\cdot\|,\cC,f$ as in Definition~\ref{def.heb}, and $\mu > 0$ and $\theta \in (0,1/2]$, it is easy to see that $(\cC,f)$ satisfies the $(\mu,\theta)$-Hölderian error bound over $\cC$ with respect to $\|\cdot\|$ if $f$ satisfies the following \emph{uniform convexity} property on $\cC$:
$$
f(\lambda \xx + (1-\lambda)\yy) 
\le \lambda f(\xx) + (1-\lambda)f(\yy) - \mu\lambda(1-\lambda)\|\xx-\yy\|^{1/\theta} 
\text{ for all } \xx,\yy\in\cC \text{ and } \lambda \in [0,1].
$$

In order to derive affine-invariant accelerated convergence rates for FW variants, we rely on the following generalizations of smoothness and H\"olderian error bound.  As we detail in Section~\ref{sec.algorithms}, the affine-invariant analysis of the methods presented in this paper relies on these concepts.

The following concepts of curvature and extended curvature were introduced by
Jaggi~\cite{jaggi2013revisiting} and by Lacoste-Julien and Jaggi~\cite{lacoste2015global} respectively.
\begin{definition}[Curvature]\label{def.error_bound_curvature}
    Let $\cC \subseteq \R^n$ be a polytope  and $f\colon \cC \to \R$ be convex and differentiable in an open set containing $\cC$.    \begin{enumerate}
        \item[(a)]
              For $L > 0$ we say that $(\cC,f)$ has \emph{$L$-curvature} if for all $\xx\in \cC, \; \dd \in \cC-\xx$, and $\eta \in [0,1]$ it holds that
              \begin{equation}\label{eq.step_size_motivator}
                  f(\xx + \eta \dd) \le  f(\xx) + \eta \ip{\nabla f(\xx)}{\dd} + \frac{L\eta^2}{2}.
              \end{equation}
        \item[(b)]
                      For $L > 0$ we say that $(\cC,f)$ has \emph{extended $L$-curvature} if for all $\xx\in \cC, \; \dd \in \cC-\cC$, and $\eta \ge 0$ such that $\xx + \eta \dd\in \cC$ it holds that
\[
                  f(\xx + \eta \dd) \le  f(\xx) + \eta \ip{\nabla f(\xx)}{\dd} + \frac{L\eta^2}{2}.
\]
    \end{enumerate}
\end{definition}

It is easy to see that the above $L$-curvature and extended $L$-curvature properties are affine invariant.  It is also evident that $(\cC,f)$ has
$L$-curvature whenever $(\cC,f)$ has extended
$L$-curvature. In addition, as noted by Jaggi~\cite{jaggi2013revisiting} and by Lacoste-Julien and Jaggi~\cite{lacoste2015global}, if $f$ is $L$-smooth over $\cC$ with respect to a norm $\|\cdot\|$ then $(\cC,f)$ has extended $L\cdot \diam(\cC)^2$-curvature, where $\diam(\cC)$ is the following {\em diameter} of the set $\cC$
\[
\diam(\cC) = \max_{\xx,\yy\in \cC} \|\xx-\yy\|.
\]

In addition to the above curvature concepts, we will rely on the following extended error bound concept.

\begin{definition}[Error bound]\label{def.error.bound}
    Let $\cC \subseteq \R^n$ be a polytope, $f\colon \cC \to \R$ be convex and differentiable in an open set containing $\cC$, and $\d:\cC\times \cC \rightarrow [0,1]$.
              For $\mu > 0$ and $\theta\in[0,1/2]$, we say that $(\cC,f)$ satisfies the \emph{$(\mu,\theta)$-error bound relative to $\d$} if for all $\xx\in \cC$  it holds that
\[
                  \left(\frac{f(\xx) - f^*}{\mu}\right)^{\theta} \ge \min_{\xx^*\in X^*} \d(\xx^*, \xx).
\]
For ease of notation, we will write $\d(X^*,\xx)$ for $\displaystyle\min_{\xx^*\in X^*} 
\d(\xx^*, \xx).$
\end{definition}

For $\cC, f, \d$ as in Definition~\ref{def.error.bound}, the pair
$(\cC,f)$ automatically satisfies the $(\mu,0)$-error bound relative to $\d$ for any $\mu > 0$ because $\d:\cC\times \cC\rightarrow [0,1]$.  The more interesting convergence results hold when $(\cC,f)$ satisfies the $(\mu,\theta)$-error bound relative $\d$ for some $\mu>0$, $\theta \in (0,1/2]$ and some judiciously chosen $\d$.

It is easy to see that the above error bound property is affine invariant provided the function $\d:\cC\times \cC \rightarrow [0,1]$ is affine-invariant.  Section~\ref{sec.algorithms} will feature the following instances of affine-invariant distance functions $\d:\cC\times \cC \rightarrow [0,1]$ for each of the algorithms we analyze: the {\em radial distance} for vanilla Frank-Wolfe, the {\em vertex distance}  for away-step Frank-Wolfe and blended pairwise Frank-Wolfe, and the {\em face distance}  for the in-face Frank-Wolfe.

We will also provide sufficient conditions for the error bound property to hold.  To that end, we will rely on the following geometric features of a polytope that were introduced in~\cite{pena2019polytope}.

Let $\cC\subseteq \R^n$ be a polytope.  Denote the sets of vertices and nonempty faces of $\cC$ by $\vertices(\cC)$ and $\faces(\cC)$ respectively.  
Suppose $\|\cdot\|$ is a norm on $\R^n$. 
For closed and nonempty $F\subseteq \cC, \, G \subseteq \cC$ let 
\[
    \dist(F, G) = \min_{\xx\in F, \yy\in G} \|\xx-\yy\|.
\]

\begin{definition}[Inner and outer facial distances]\label{def.facial_distance}
Let $\R^n$ be endowed with a norm $\|\cdot\|$ and let $\cC \subseteq \R^n$ be a polytope. For $F\in \faces(\cC)$ the \emph{inner facial distance} from $F$ is defined as
\[
        \Phi(F,\cC):=\min_{G\in\faces(F)\atop G\subsetneq \cC} \dist(G, \conv(\vertices(\cC)\setminus G)),
\]
and the  \emph{outer facial distance} from $F$  is defined as
\[
        \bar\Phi(F,\cC):=\min_{G\in\faces(F),H\in\faces(\cC)\atop G\cap H = \emptyset} \dist(G,H).
\]
\end{definition}
It is easy to see that $\bar \Phi(F,\cC) \ge \Phi(F,\cC) \ge \Phi(\cC,\cC) > 0$ for all $F\in \faces(\cC)$.  Furthermore, in~\cite{pena2019polytope} it was shown that $\Phi(\cC,\cC)$ coincides with the \emph{pyramidal width} of $\cC$ introduced by Lacoste-Julien and Jaggi~\cite{lacoste2015global}.

\medskip

Figure~\ref{fig:innerfacial} and Figure~\ref{fig:outerfacial} respectively illustrate the inner facial distance $\Phi(F,\cC)$ and the outer facial distance $\bar\Phi(F,\cC)$ for $F=\{(0,0)\}, \cC = [0,1]\times [0,1]$ and for 
$F=\{(0,0)\}, \cC = [0,2]\times [0,1]$ when $\R^2$ is endowed with the Euclidean norm.   In both cases  $F:=\{(0,0)\}\in \faces(\cC)$ and the relevant distance is depicted via the dotted segment(s).  In Figure~\ref{fig:innerfacial}(a) the dotted segment has length $\Phi(F,\cC)=\sqrt{2}/2$, in Figure~\ref{fig:innerfacial}(b) the dotted segment has length $\Phi(F,\cC)=2\sqrt{5}/5$.
In Figure~\ref{fig:outerfacial} all of the dotted segments have length $\bar \Phi(F,\cC)=1$. Observe that the outer distance $\bar \Phi(F,\cC)$ is attained at two different segments for $F=\{(0,0)\}, \cC = [0,1]\times [0,1]$.

\begin{figure}[!t]
\captionsetup[subfigure]{justification=centering}
 \hspace{-.6in}
\begin{tabular}{c c} 
\begin{subfigure}{.55\textwidth}
    \centering
        \includegraphics[width=.55\textwidth]{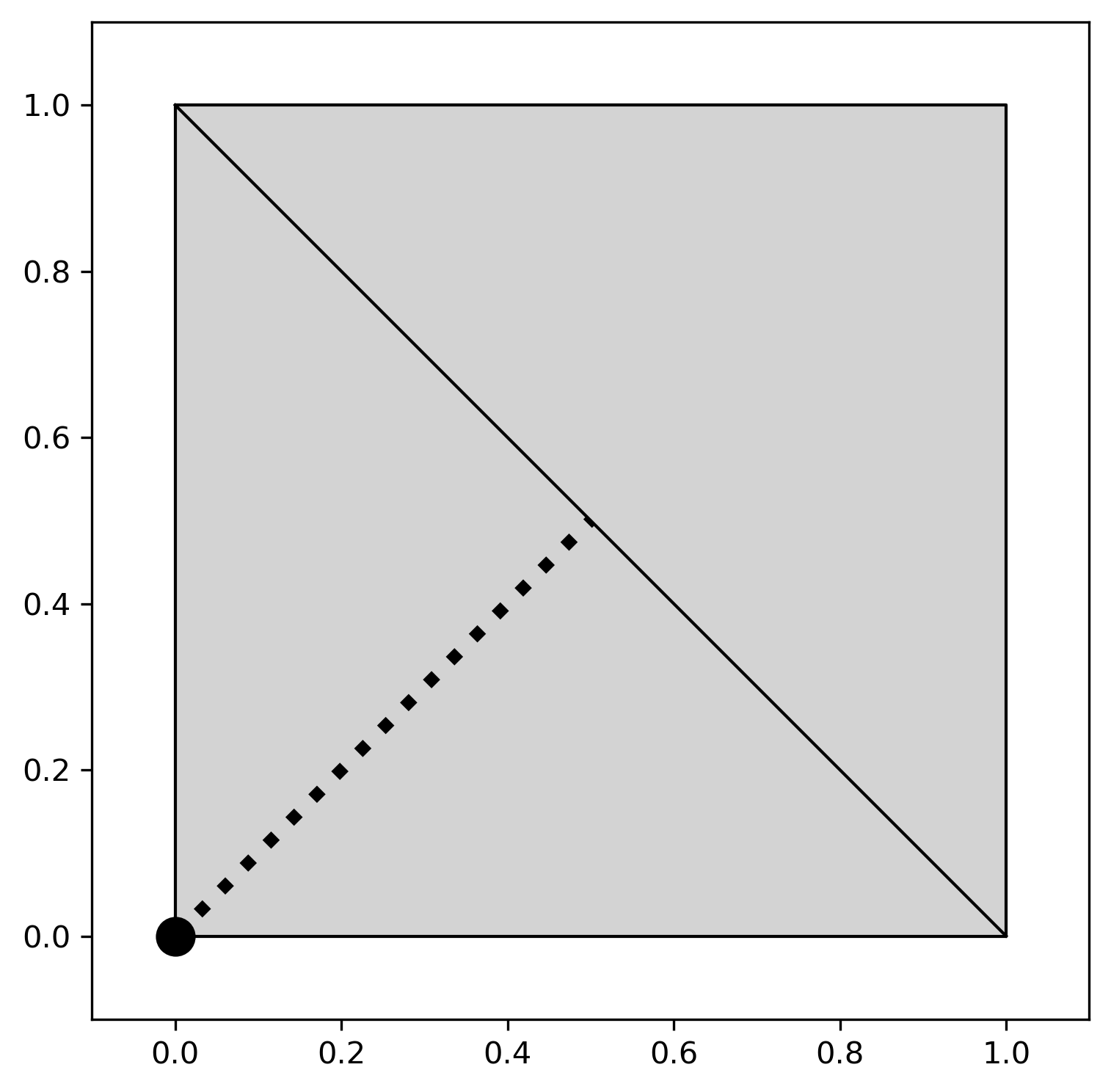}
        \caption{$F = \{(0,0)\},\;\cC = [0,1]\times [0,1], \; \Phi(F,\cC)=\sqrt{2}/2.$}
    \end{subfigure} & \hspace{-.5in}
    \begin{subfigure}{.55\textwidth}
    \centering
        \includegraphics[width=1\textwidth]{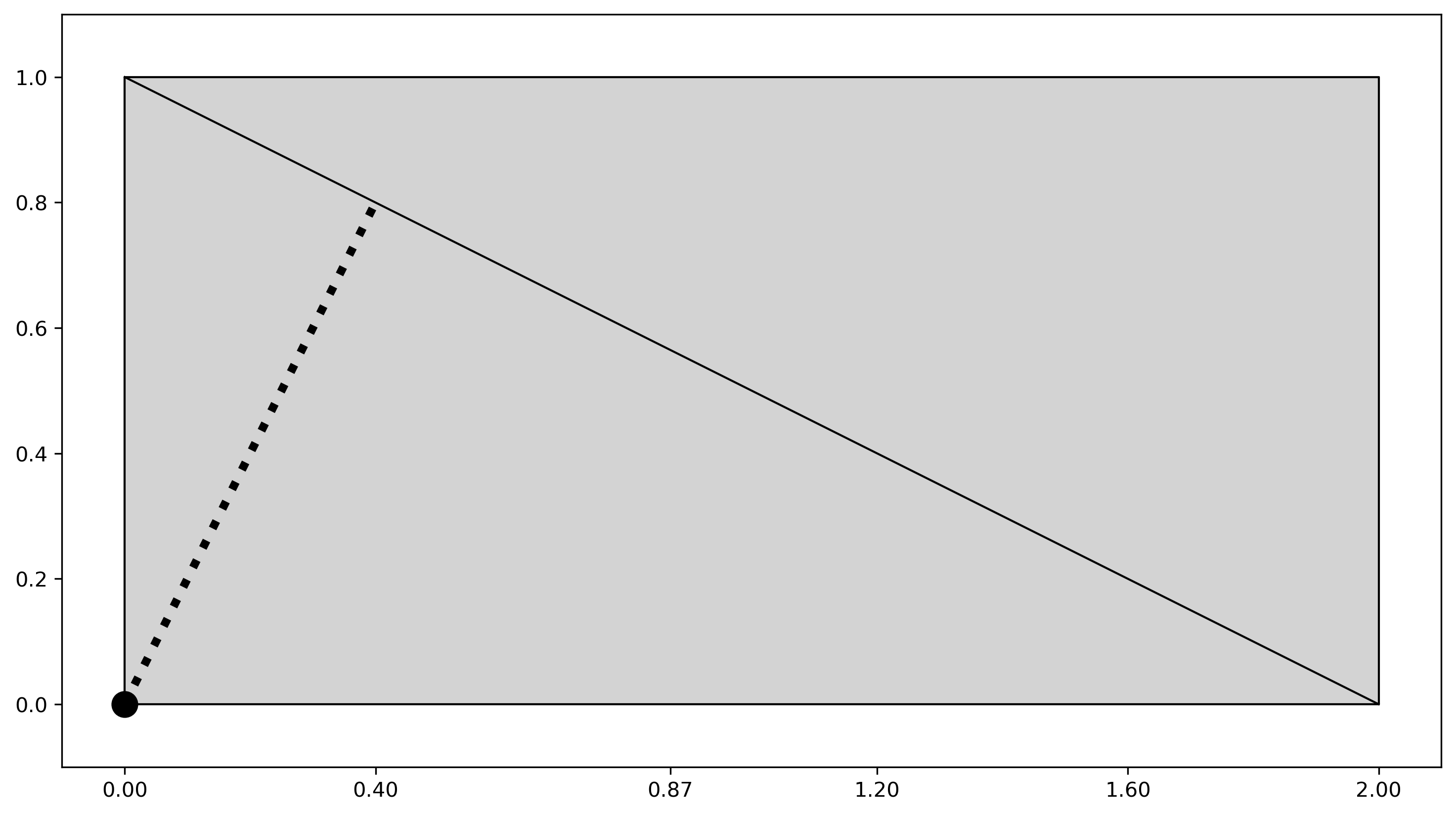}
        \caption{$F = \{(0,0)\},\;\cC = [0,2]\times [0,1], \; \Phi(F,\cC)=2\sqrt{5}/5$.}
    \end{subfigure} 
\end{tabular}
\caption{Inner facial distance $\Phi(F,\cC)$ equals the length of the dotted line.}\label{fig:innerfacial}
\end{figure}

\begin{figure}[!t]
\captionsetup[subfigure]{justification=centering}
 \hspace{-.6in}
\begin{tabular}{c c} 
\begin{subfigure}{.55\textwidth}
    \centering
        \includegraphics[width=.55\textwidth]{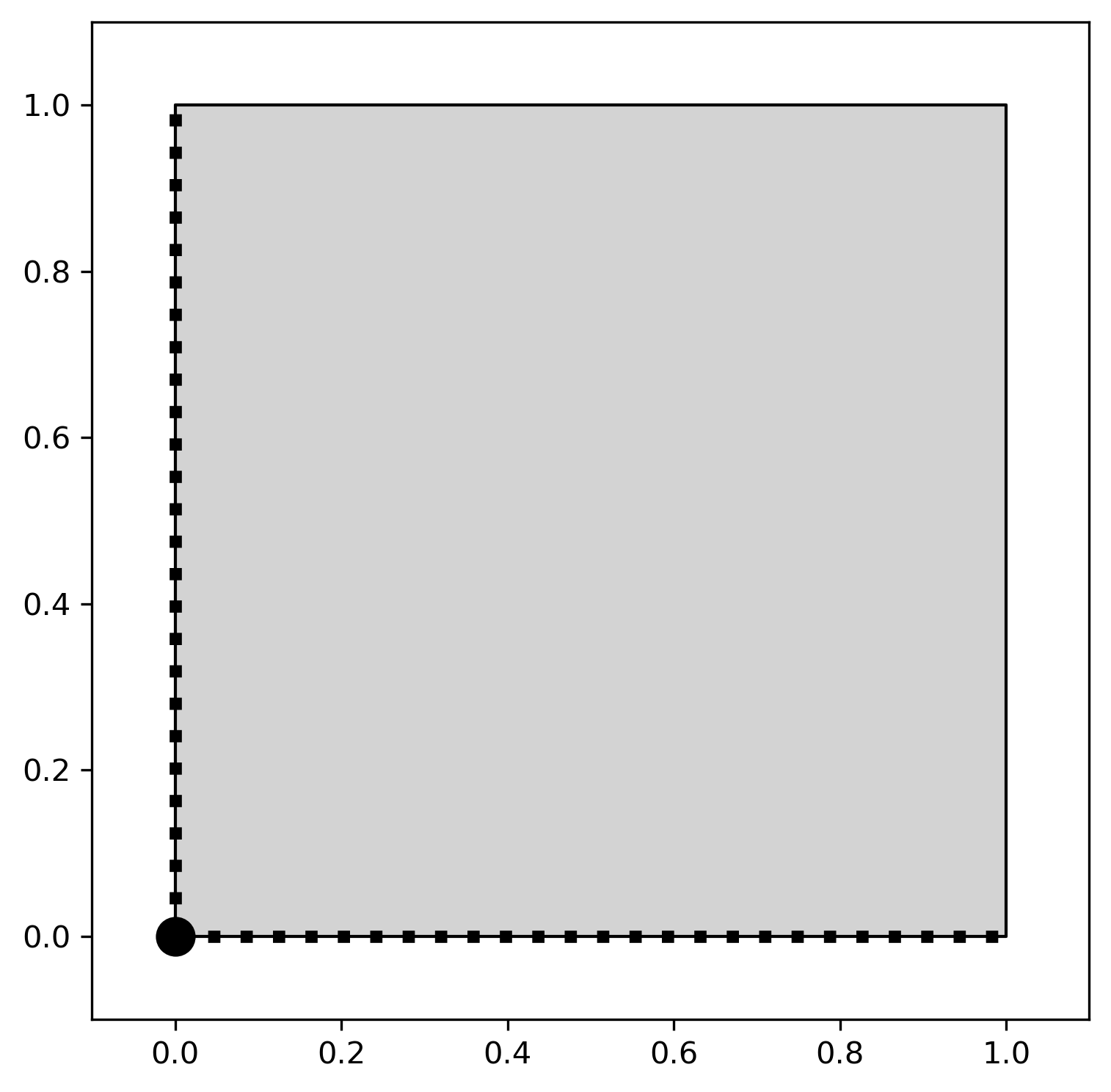}
        \caption{$F = \{(0,0)\},\;\cC = [0,1]\times [0,1], \; \bar \Phi(F,\cC)=1$.}
    \end{subfigure} & \hspace{-.5in}
    \begin{subfigure}{.55\textwidth}
    \centering
        \includegraphics[width=1\textwidth]{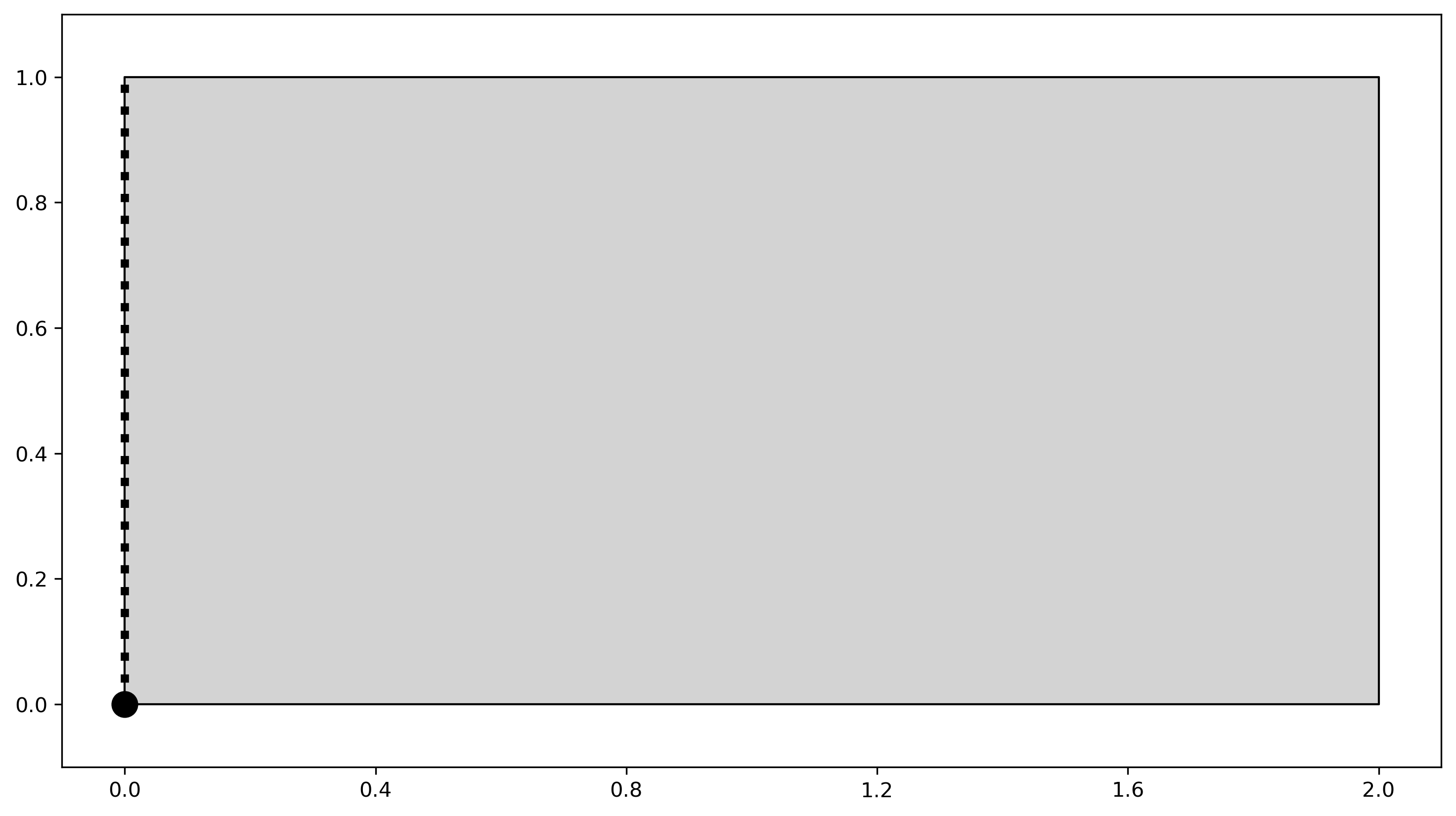}
        \caption{$F = \{(0,0)\},\;\cC = [0,2]\times [0,1], \; \bar \Phi(F,\cC)=1$.}
    \end{subfigure} 
\end{tabular}
\caption{Outer facial distance $\bar\Phi(F,\cC)$
equals the length of the dotted line(s).
}\label{fig:outerfacial}
\end{figure}

\bigskip

To derive sublinear convergence rates for FW variants, we will rely on the following lemma from~\cite{polyak1987,borwein2014analysis}:
\begin{lemma}\label{lemma:borwein}
    Suppose that $p > 0$ and $(\beta_t)_{t\in\N}, (\sigma_t)_{t\in\N}$ are such that $\beta_t, \sigma_t \geq 0$ and $\beta_{t+1}\leq (1-\sigma_t\beta_t^p)\beta_t$ for $t \in \N$. Then, for all $t\in \N$, it holds that
    \begin{align*}
        \beta_t \leq \left(\beta_0^{-p} + p \sum_{i=0}^{t-1}\sigma_i \right)^{-\frac{1}{p}}.
    \end{align*}
\end{lemma}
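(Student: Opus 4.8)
The plan is to pass to the reciprocals. Set $u_t := \beta_t^{-p}$ with the convention $0^{-p} = +\infty$, so that the asserted bound is equivalent to $u_t \ge \beta_0^{-p} + p\sum_{i=0}^{t-1}\sigma_i$ (the map $x \mapsto x^{-1/p}$ is decreasing on $(0,+\infty]$, and the inequality is vacuously true whenever some $\beta_t = 0$). I would prove this reformulated statement by induction on $t$. The base case $t = 0$ is the trivial identity $u_0 \ge u_0$ with an empty sum, so everything reduces to the one-step claim $\beta_{t+1}^{-p} \ge \beta_t^{-p} + p\sigma_t$.

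To prove the one-step claim, write $x := \sigma_t\beta_t^p \ge 0$, so the hypothesis $\beta_{t+1} \le (1-\sigma_t\beta_t^p)\beta_t$ becomes $\beta_{t+1} \le (1-x)\beta_t$. If $\beta_t = 0$ or $x \ge 1$, then $\beta_{t+1} \le 0$, forcing $\beta_{t+1} = 0$ (and hence $\beta_s = 0$ for all $s > t$); the inequality then holds because its left side is $+\infty$ under our convention. In the remaining case $\beta_t > 0$ and $x \in [0,1)$, monotonicity of $z \mapsto z^{-p}$ on $(0,\infty)$ gives $\beta_{t+1}^{-p} \ge (1-x)^{-p}\beta_t^{-p}$, so it suffices to invoke the elementary inequality $(1-x)^{-p} \ge 1 + px$ for $p > 0$, $x \in [0,1)$; multiplying by $\beta_t^{-p}$ and using $x\beta_t^{-p} = \sigma_t\beta_t^p\beta_t^{-p} = \sigma_t$ yields precisely $\beta_{t+1}^{-p} \ge \beta_t^{-p} + p\sigma_t$. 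Combining with the inductive hypothesis $\beta_t^{-p} \ge \beta_0^{-p} + p\sum_{i=0}^{t-1}\sigma_i$ completes the induction.

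The auxiliary inequality $(1-x)^{-p} \ge 1 + px$ is itself handled by a one-line argument: the function $g(x) = (1-x)^{-p} - 1 - px$ satisfies $g(0) = 0$ and $g'(x) = p\bigl((1-x)^{-p-1} - 1\bigr) \ge 0$ on $[0,1)$, since $(1-x)^{-p-1} \ge 1$ there, whence $g \ge 0$.

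I do not expect a substantive obstacle; the entire content is the monotonicity inequality above plus the induction. The only point demanding care is the bookkeeping for the degenerate cases ($\beta_t = 0$, or $\sigma_t\beta_t^p \ge 1$ which forces $\beta_{t+1} = 0$), and these are absorbed cleanly by the $0^{-p} = +\infty$ convention.
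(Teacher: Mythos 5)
Your argument is correct and complete. The paper itself gives no proof of this lemma --- it is quoted from the cited references (Polyak; Borwein--Li--Yao) --- and your reciprocal-plus-Bernoulli argument, i.e.\ passing to $u_t=\beta_t^{-p}$, establishing the one-step bound $\beta_{t+1}^{-p}\ge \beta_t^{-p}+p\sigma_t$ via $(1-x)^{-p}\ge 1+px$, and telescoping by induction, is exactly the standard proof found in those sources. Your handling of the degenerate cases ($\beta_t=0$ or $\sigma_t\beta_t^p\ge 1$, which force all subsequent $\beta_s=0$ so that the bound holds trivially since its right-hand side is nonnegative) is also sound.
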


\section{Algorithms}\label{sec.algorithms}

In this section, we present affine-invariant convergence rates for the vanilla Frank-Wolfe algorithm (FW) in Section~\ref{sec.vanilla}, the two active set variants away-step Frank-Wolfe algorithm (AFW) and blended-pairwise Frank-Wolfe algorithm (BPFW) in Section~\ref{sec.away_blended}, and the in-face Frank-Wolfe algorithm (IFW) in Section~\ref{sec.in_face}.  Although our discussion focuses on the case when $\cC\subseteq \R^n$ is a polytope, we note that our developments for both FW and IFW apply in the more general context when $\cC\subseteq \R^n$ is a compact convex set.

\subsection{Vanilla Frank-Wolfe algorithm.}\label{sec.vanilla}
\begin{algorithm}[!t]
    \caption{Frank-Wolfe algorithm (FW)}\label{alg:vfw}
	\begin{algorithmic}        
    \State{\bf Input:} {$\xx^{(0)}\in\cC$.
    }
    \State{\bf Output:} {$\xx^{(t)}\in\cC$ for $t\in\N$.}
    \hrulealg
    \For{$t\in\N$}
    \State {$\vv^{(t)} \gets \argmin_{\vv\in \cC} \langle \nabla f(\xx^{(t)}), \vv \rangle$ }
    \State {$\xx^{(t+1)} \gets \xx^{(t)} + \eta_t (\vv^{(t)}- \xx^{(t)})$
    for some $\eta_t \in [0,1]$}
    \EndFor 
\end{algorithmic}    
\end{algorithm}
As a preamble to our main developments, we first discuss the conceptually more approachable case of the vanilla Frank-Wolfe algorithm as described in Algorithm~\ref{alg:vfw}. This case provides a blueprint for our subsequent discussion of Frank-Wolfe variants.

The \emph{radial distance} introduced by Gutman and Pe\~na~\cite{gutman2021condition} is the key ingredient to formalize the affine-invariant convergence rate of FW. 

\begin{definition}[Radial distance]\label{def.radial_distance}
    Let $\cC\subseteq \R^n$ be a polytope. Define the \emph{radial distance} $\rad\colon \cC\times\cC\to[0,1]$ as follows. For $\yy,\xx\in \cC$ let the radial distance between $\yy$ and $\xx$ be
\[
        \rad(\yy,\xx)  = \min\{\rho \geq 0 \mid \yy-\xx = \rho (\vv-\xx)  \ \text{for some } \vv\in \cC\}.
\]
\end{definition}
Note that $\rad(\yy,\xx)= 0$ if and only if $\yy = \xx$.  Figure~\ref{fig:radial} displays visualizations of the function $\rad(\cdot,\xx)$ on $\cC$ for $\cC=[0,1]\times [0,1],\; \xx = (0.2,0.2)$ and also for $\cC=[0,2]\times [0,1],\; \xx = (0.4,0.2)$.  Figure~\ref{fig:radial}(b) is a stretched version of Figure~\ref{fig:radial}(a) due of the affine invariance of $\rad$.  

\begin{figure}[!t]
\captionsetup[subfigure]{justification=centering}
 \hspace{-.6in}
\begin{tabular}{c c} 
\begin{subfigure}{.55\textwidth}
    \centering
        \includegraphics[width=.62\textwidth]{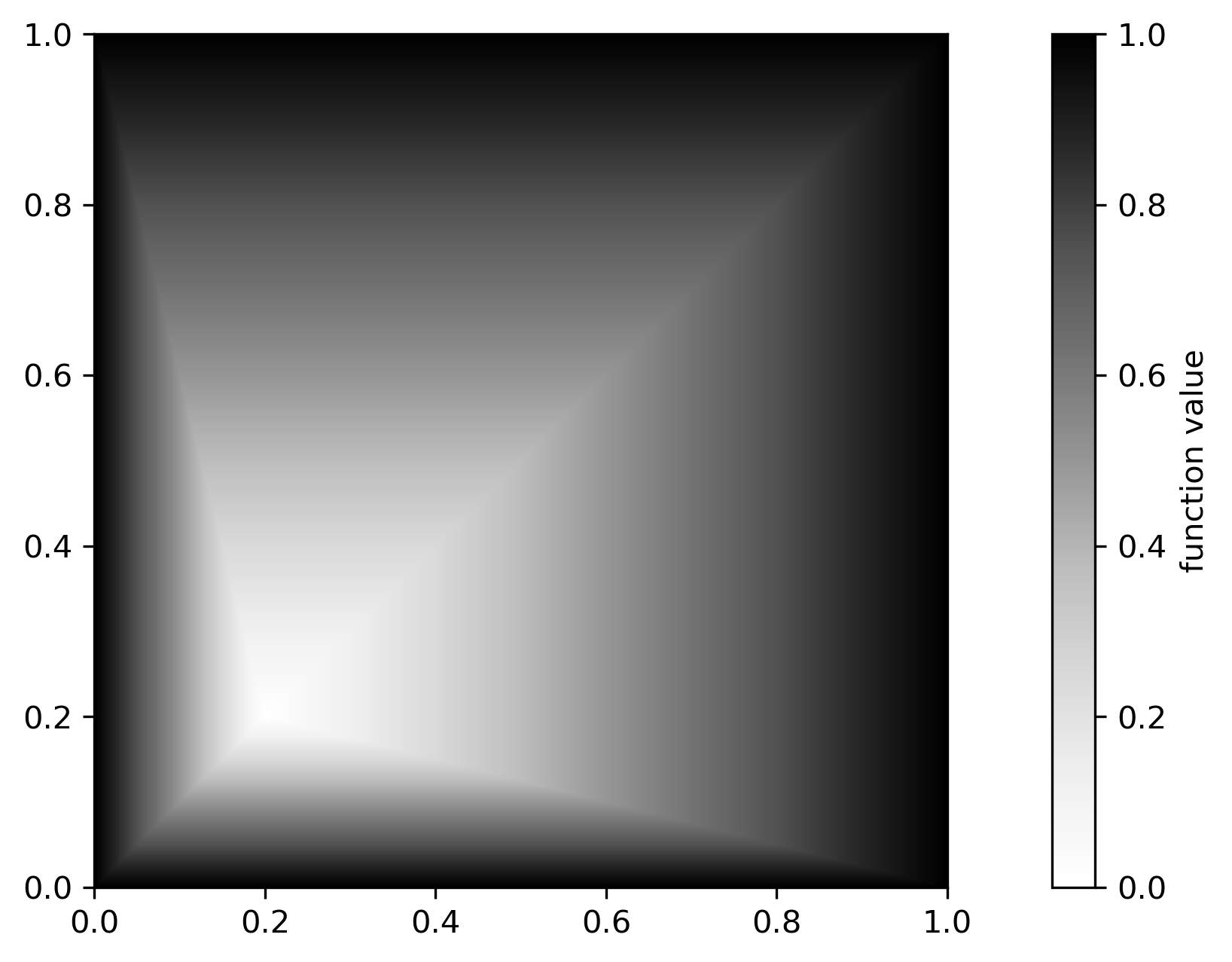}
        \caption{$\cC = [0,1]\times [0,1], \xx = (0.2,0.2)$.}
    \end{subfigure} & \hspace{-.5in}
    \begin{subfigure}{.575\textwidth}
    \centering
        \includegraphics[width=1\textwidth]{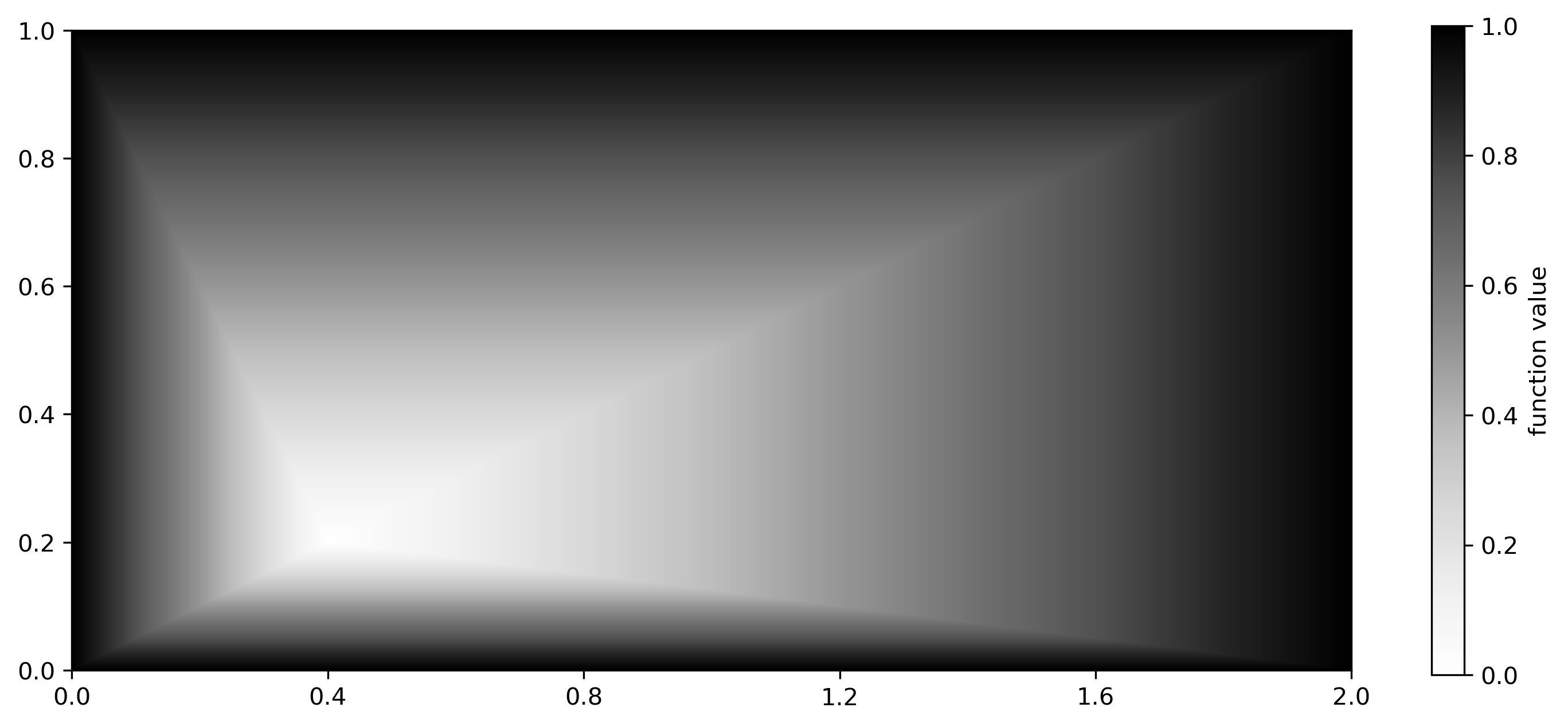}
        \caption{$\cC = [0,2]\times [0,1], \xx = (0.4,0.2)$.}
    \end{subfigure} 
\end{tabular}
\caption{Visualization of the function $\rad(\cdot,\xx)$ on $\cC$ for the radial distance $\rad$ of $\cC$.  The shade of gray indicates the function value. Points $\yy\in \cC$ with $\rad(\yy,\xx) = 0$ are in white and points $\yy\in \cC$ with $\rad(\yy,\xx)=1$ are in black.}\label{fig:radial}
\end{figure}

Functions of the form $\rad(\cdot,\xx)$ are somewhat intuitive and easy to visualize as they correspond to the gauge function of the set $\cC-\xx$.  However, for our analysis functions of the form $\rad(\yy,\cdot)$ are more relevant.  Figure~\ref{fig:radialrev} displays visualizations of the function $\rad(\yy,\cdot)$ on $\cC$ for $\cC=[0,1]\times [0,1]$ and various choices of $\yy\in \cC$.  Figure~\ref{fig:radialrev}(c) shows that when $\yy$ is a vertex of $\cC$, it holds that $\rad(\yy,\xx) = 1$ for every $\xx\in \cC$ with $\xx\ne \yy$.  Figure~\ref{fig:radialrev}(a) and 
Figure~\ref{fig:radialrev}(b) show that a similar phenomenon occurs as $\yy$ approaches a vertex of $\cC$.  Similar phenomena occur when $\yy$ lies on, or approaches, the relative boundary of $\cC$.

\begin{figure}[ht]
\captionsetup[subfigure]{justification=centering}
 \hspace{-.6in}
\begin{tabular}{ccc} 
\begin{subfigure}{.5\textwidth}
    \centering
        \includegraphics[width=.6\textwidth]{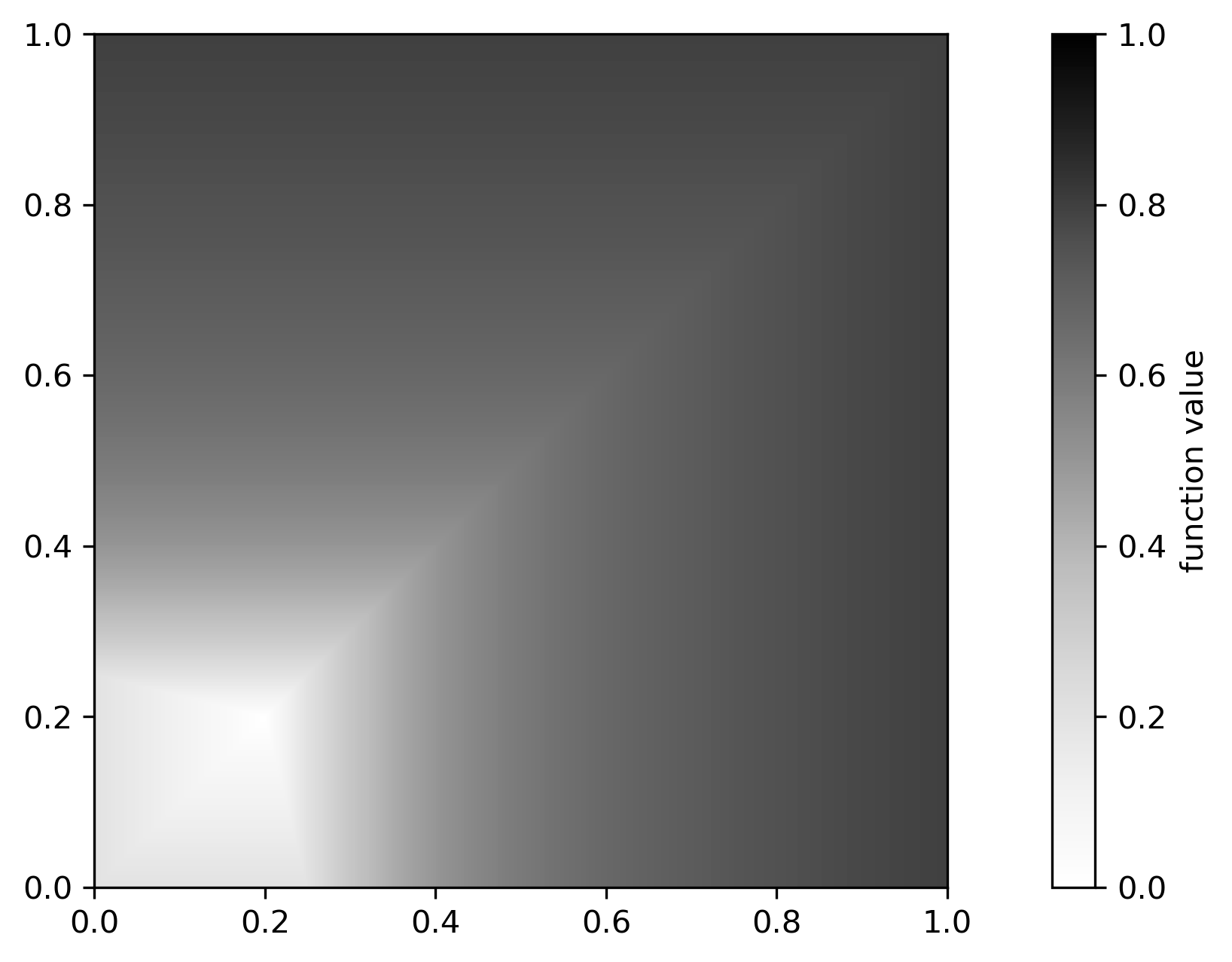}
        \caption{$\cC = [0,1]\times [0,1], \yy = (0.2,0.2)$.}
    \end{subfigure} & \hspace{-.5in}
 \hspace{-.7in}    
    \begin{subfigure}{.5\textwidth}
    \centering
        \includegraphics[width=.6\textwidth]{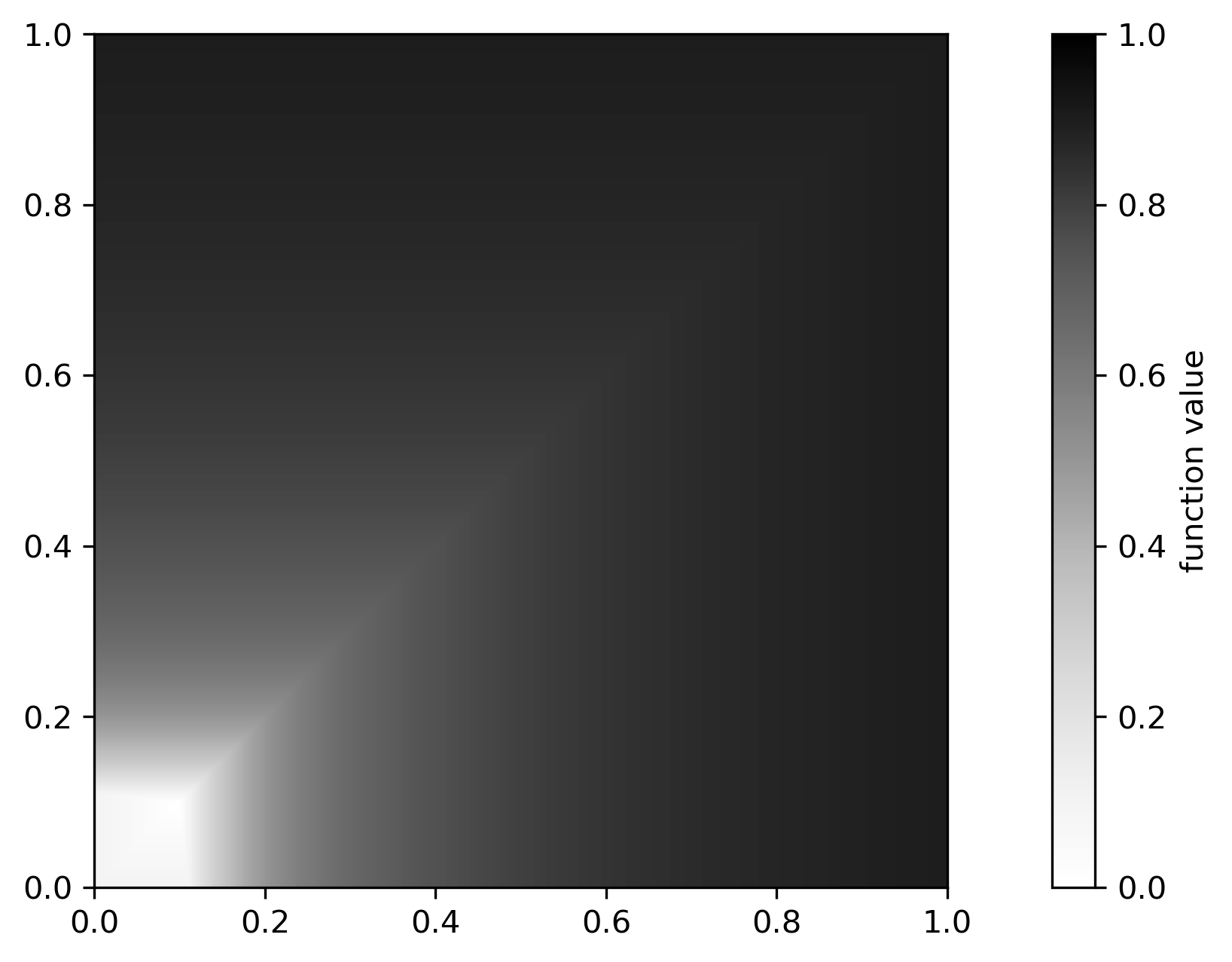}
        \caption{$\cC = [0,1]\times [0,1], \yy = (0.1,0.1)$.}
    \end{subfigure} 
 \hspace{-1.05in}    
    \begin{subfigure}{.5\textwidth}
    \centering
        \includegraphics[width=.6\textwidth]{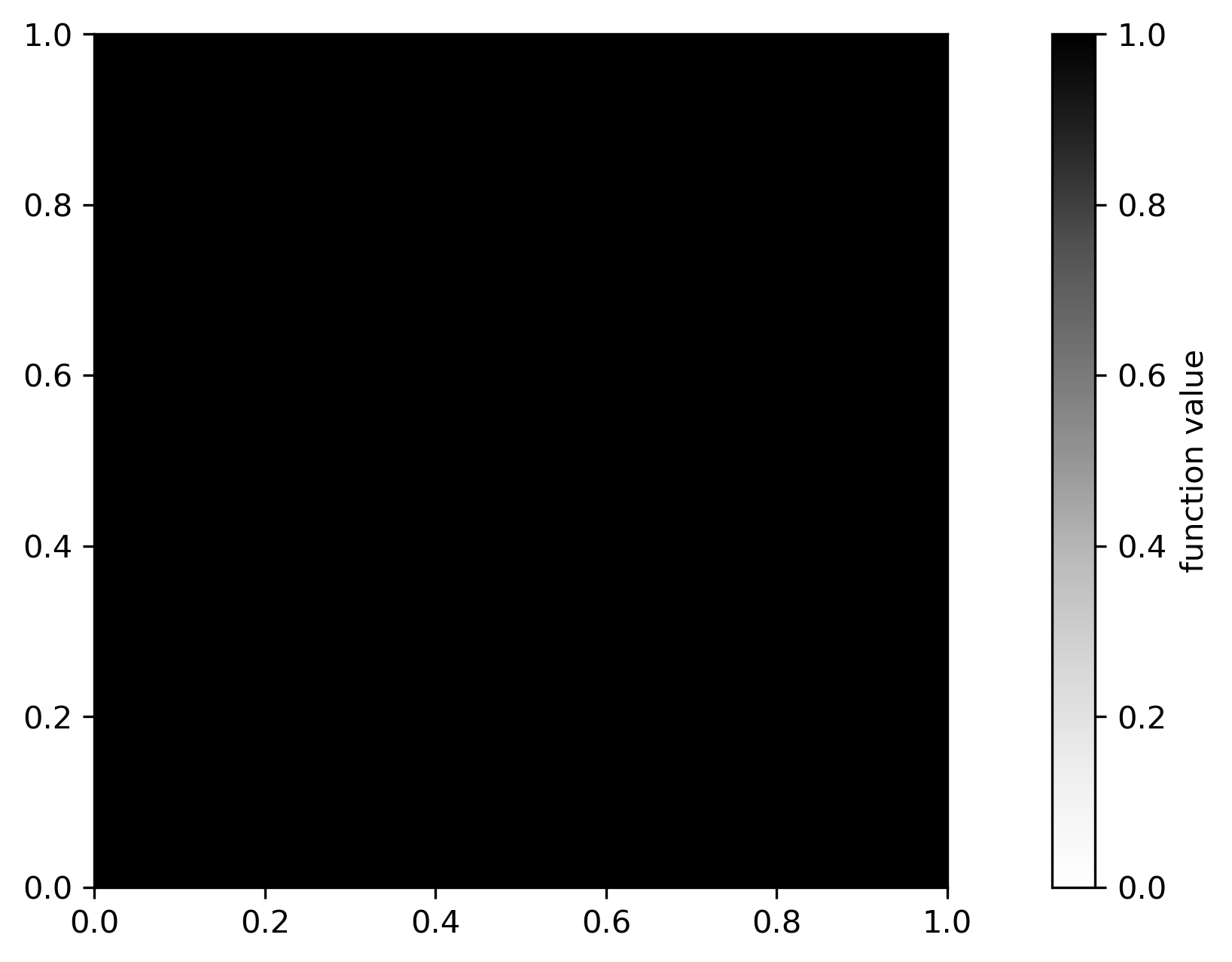}
        \caption{$\cC = [0,1]\times [0,1], \yy = (0,0)$.}
    \end{subfigure} 
\end{tabular}
\caption{Visualization of the function
 $\rad(\yy,\cdot)$ on $\cC$ for the radial distance 
$\rad$ of $\cC$.  The shade of gray indicates the function value. Points $\xx\in \cC$ with $\rad(\yy,\xx) = 0$ are in white and points $\xx\in \cC$ with $\rad(\yy,\xx)=1$ are in black.}
\label{fig:radialrev}
\end{figure}

As detailed in Theorem~\ref{thm.vanilla} below, the convergence rate of Algorithm~\ref{alg:vfw} follows from the curvature of $(\cC,f)$ and error bound of $(\cC,f)$ relative to $\rad$.
Suppose that $(\cC,f)$ has  $L$-curvature. We can minimize the left-hand-side of \eqref{eq.step_size_motivator} for $\dd := \vv^{(t)}- \xx^{(t)}$.  This step-size rule is referred to as \emph{line-search}:
\[
    \eta_t = \argmin_{\eta\in [0,1]} f(\xx^{(t)}+\eta (\vv^{(t)}- \xx^{(t)})).
\]
Alternatively, note that the right-hand-side of \eqref{eq.step_size_motivator} is a majorant for $f(\xx+\eta\dd)$. Thus, another natural step-size rule minimizes the right-hand-side of \eqref{eq.step_size_motivator} over $\eta\in [0,1]$. The resulting majorant-minimization step is referred to as \emph{short-step}:
\[
    \eta_t = \argmin_{\eta\in [0,1]}\left\{f(\xx^{(t)}) + \eta \ip{ \nabla f(\xx^{(t)})}{\vv^{(t)} - \xx^{(t)}} + \frac{L\eta^2}{2}\right\}.
\]
For either of the line-search or short-step rules, the following progress bound holds 
when $\eta_t < 1$
\begin{align}\label{eq.short.step}
    f(\xx^{(t+1)}) \le f(\xx^{(t)}) - \frac{\ip{\nabla f(\xx^{(t)})}{\xx^{(t)}-\vv^{(t)}}^2}{2L},
\end{align}
and the following progress bound holds 
when $\eta_t = 1$
\begin{align}\label{eq.short.step.2}
    f(\xx^{(t+1)}) \le f(\xx^{(t)}) - \frac{\ip{\nabla f(\xx^{(t)})}{\xx^{(t)}-\vv^{(t)}}}{2}.
\end{align}

Our convergence analysis of FW relies on the progress bounds~\eqref{eq.short.step} and \eqref{eq.short.step.2}, and thus yields identical convergence rates for line-search and short-step.

\begin{theorem}[FW]\label{thm.vanilla}
    Let $\cC \subseteq \R^n$ be a polytope, let $f\colon \cC \to \R$ be convex and differentiable in an open set containing $\cC$, and suppose that $(\cC,f)$ has $L$-curvature for some $L >0$, and satisfies the $(\mu,\theta)$-error bound relative to $\rad$ for some $\mu > 0$ and $\theta \in[0,1/2]$. Then the iterates of Algorithm~\ref{alg:vfw} with line-search or short-step satisfy the following convergence rates.
    \begin{itemize}
    \item     When $\theta = 1/2$, for all $t\in\N$ the following linear convergence rate holds:
    \begin{equation}\label{eq.linear.vfw}
        f(\xx^{(t)})-f^* \leq (f(\xx^{(0)}) -f^*)\left(1-\min\left\{\frac{\mu}{2L},\frac{1}{2}\right\}\right)^{t}.
    \end{equation}
    \item 
    When $\theta \in [0,1/2)$ the following initial linear convergence rate holds for $t\in \{0,1,\dots,t_0\}$
    \begin{equation}\label{eq.linearregime.vfw}
        f(\xx^{(t)})-f^* \leq \frac{f(\xx^{(0)}) -f^*}{2^t},
    \end{equation}
 where $t_0\in\N$ is the smallest $t$ such that
\[
        \frac{\mu^{2\theta}(f(\xx^{(t)}) - f^*)^{1-2\theta}}{2L} \le \frac{1}{2}.
\]
Then for $t\in \N$ with $t >  t_0$, the following sublinear convergence rate holds:
    \begin{equation}\label{eq.sublinear.vfw}
        f(\xx^{(t)})-f^*  \leq\left((f(\xx^{(t_0)}) -f^*)^{(2\theta-1)} + \frac{(1-2\theta)\mu^{2\theta}(t-t_0)}{2L}\right)^{\frac{1}{2\theta-1}}.
    \end{equation}
    \end{itemize}
\end{theorem}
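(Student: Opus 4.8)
The plan is to track only the scalar primal gap $h_t := f(\xx^{(t)}) - f^*$ together with the Frank--Wolfe gap $g_t := \ip{\nabla f(\xx^{(t)})}{\xx^{(t)} - \vv^{(t)}}$, derive a one-step recursion for $h_t$, and feed it into Lemma~\ref{lemma:borwein}. The heart of the argument is a lower bound on $g_t$ in terms of $h_t$. Fix $t$ with $h_t > 0$ (if $h_t = 0$ the iterate is already optimal and every claimed bound holds trivially for all subsequent indices). Choose $\xx^* \in X^*$ attaining $\rad(X^*, \xx^{(t)})$ and set $\rho_t := \rad(\xx^*, \xx^{(t)}) \in (0, 1]$; by Definition~\ref{def.radial_distance} there is $\vv \in \cC$ with $\xx^* - \xx^{(t)} = \rho_t(\vv - \xx^{(t)})$. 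Convexity of $f$ gives $\ip{\nabla f(\xx^{(t)})}{\xx^* - \xx^{(t)}} \le f^* - f(\xx^{(t)}) = -h_t$, hence $\ip{\nabla f(\xx^{(t)})}{\xx^{(t)} - \vv} \ge h_t/\rho_t$; since $\vv^{(t)}$ minimizes $\ip{\nabla f(\xx^{(t)})}{\cdot}$ over $\cC$ we conclude $g_t \ge h_t/\rho_t$. The $(\mu,\theta)$-error bound relative to $\rad$ yields $\rho_t \le (h_t/\mu)^\theta$, so $g_t \ge \mu^\theta h_t^{1-\theta}$; and directly, $g_t \ge \ip{\nabla f(\xx^{(t)})}{\xx^{(t)} - \xx^*} \ge h_t$.

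Combining these estimates with the progress bounds \eqref{eq.short.step} and \eqref{eq.short.step.2} gives the one-step recursion: if $\eta_t = 1$ then $h_{t+1} \le h_t - g_t/2 \le h_t/2$, and if $\eta_t < 1$ then $h_{t+1} \le h_t - g_t^2/(2L) \le h_t\bigl(1 - \tfrac{\mu^{2\theta}}{2L}h_t^{1-2\theta}\bigr)$. In particular $h_t$ is non-increasing. When $\theta = 1/2$ the factor $h_t^{1-2\theta}$ equals $1$, so the two cases read $h_{t+1} \le h_t(1 - \mu/(2L))$ and $h_{t+1} \le h_t/2$; either way $h_{t+1} \le h_t\bigl(1 - \min\{\mu/(2L),\,1/2\}\bigr)$, and \eqref{eq.linear.vfw} follows by induction on $t$.

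When $\theta \in [0,1/2)$, let $t_0$ be the smallest index with $\tfrac{\mu^{2\theta}}{2L}h_{t_0}^{1-2\theta} \le 1/2$, as in the statement; $t_0$ is finite, since otherwise the recursions above would force $h_{t+1} \le h_t/2$ for every $t$, so $h_t \to 0$, contradicting the standing lower bound $h_t^{1-2\theta} > L/\mu^{2\theta}$ (valid because $1 - 2\theta > 0$). For $t < t_0$ we have $\tfrac{\mu^{2\theta}}{2L}h_t^{1-2\theta} > 1/2$, so both recursions yield $h_{t+1} \le h_t/2$, giving \eqref{eq.linearregime.vfw} for $t \in \{0,\dots,t_0\}$. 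For $t \ge t_0$, monotonicity of $h_t$ and $1 - 2\theta > 0$ make $\tfrac{\mu^{2\theta}}{2L}h_t^{1-2\theta} \le 1/2$ persist, which is equivalent to $h_t/2 \le h_t - \tfrac{\mu^{2\theta}}{2L}h_t^{2-2\theta}$; hence the $\eta_t = 1$ branch also satisfies $h_{t+1} \le h_t\bigl(1 - \tfrac{\mu^{2\theta}}{2L}h_t^{1-2\theta}\bigr)$, so this recursion holds for all $t \ge t_0$. Applying Lemma~\ref{lemma:borwein} to $\beta_s := h_{t_0+s}$ with $p = 1 - 2\theta$ and $\sigma_s \equiv \mu^{2\theta}/(2L)$ gives, for $t > t_0$, $h_t \le \bigl(h_{t_0}^{2\theta-1} + \tfrac{(1-2\theta)\mu^{2\theta}(t-t_0)}{2L}\bigr)^{1/(2\theta-1)}$, which is \eqref{eq.sublinear.vfw}.

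The only genuinely new ingredient is the gap bound $g_t \ge \mu^\theta h_t^{1-\theta}$ of the first paragraph: this is exactly where the radial distance and the error bound enter, and it is what keeps the whole analysis affine-invariant. The rest is bookkeeping — handling the two step-size regimes uniformly (for which the auxiliary bound $g_t \ge h_t$ is the key), checking that the phase boundary $t_0$ is finite and that the favorable regime is absorbing, and matching the index shift when invoking Lemma~\ref{lemma:borwein}. The point requiring the most care is the transition at $t_0$, where one must verify that the "$\eta_t = 1$" branch does not break the recursion needed by Lemma~\ref{lemma:borwein}.
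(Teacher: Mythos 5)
Your proof is correct and follows essentially the same route as the paper: you re-derive the scaling bound $g_t \ge \mu^\theta h_t^{1-\theta}$ (this is exactly Lemma~\ref{lem.scaling.radial}, which you inline rather than cite), arrive at the same one-step recursion \eqref{eq.decrease.vfw}, and conclude via Lemma~\ref{lemma:borwein} with the same parameters. Your extra care at the phase transition --- verifying that $t_0$ is finite and that the $\eta_t=1$ branch still satisfies the recursion form required by Lemma~\ref{lemma:borwein} for $t \ge t_0$ --- is a point the paper's proof passes over silently, and it checks out.
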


The rates in Theorem~\ref{thm.vanilla} interpolate between the iconic rate $\cO(t^{-1})$ \cite{jaggi2013revisiting} when $\theta = 0$ and the linear rate~\eqref{eq.linear.vfw} when $\theta = 1/2$.
The proof of Theorem~\ref{thm.vanilla} relies on the following \emph{scaling lemma.}

\begin{lemma}\label{lem.scaling.radial}
    Let $\cC \subseteq \R^n$ be a polytope  and let $f\colon \cC \to \R$ be convex and differentiable in an open set containing $\cC$. Then, for all $\xx\in \cC\setminus X^*$, it holds that
    \begin{equation}\label{eq.scaling.rad}
        \max_{\vv\in \cC} \ip{\nabla f(\xx)}{\xx-\vv} \geq \frac{f(\xx) - f^*}{\rad(X^*,\xx)}.
    \end{equation}
    In particular, if $(\cC,f)$ satisfies a $(\mu,\theta)$-error bound relative to $\rad$ for some $\mu > 0$ and $\theta \in [0, 1/2]$, then for all $\xx\in \cC$ it holds that
    \begin{equation}\label{eq.decrease}
        \max_{\vv\in \cC} \ip{\nabla f(\xx)}{\xx-\vv} \geq \mu^\theta(f(\xx) - f^*)^{1-\theta}.
    \end{equation}
\end{lemma}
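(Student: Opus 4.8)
The plan is to prove the key inequality \eqref{eq.scaling.rad} directly from the definition of the radial distance together with convexity of $f$, and then derive \eqref{eq.decrease} by plugging in the error bound. Fix $\xx \in \cC \setminus X^*$ and choose $\xx^* \in X^*$ achieving $\rad(X^*,\xx) = \rad(\xx^*,\xx) =: \rho$. By definition of the radial distance, there exists $\vv \in \cC$ with $\xx^* - \xx = \rho(\vv - \xx)$; note $\rho > 0$ since $\xx \notin X^*$, and in fact $\rho \le 1$ because $\xx^* \in \cC$ and $\rad$ maps into $[0,1]$. First I would write the gradient inequality for the convex function $f$ at the point $\xx$ in the direction of $\xx^*$: convexity gives $f^* = f(\xx^*) \ge f(\xx) + \ip{\nabla f(\xx)}{\xx^* - \xx}$, hence $\ip{\nabla f(\xx)}{\xx - \xx^*} \ge f(\xx) - f^*$.

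Next I would substitute $\xx^* - \xx = \rho(\vv - \xx)$, i.e. $\xx - \xx^* = \rho(\xx - \vv)$, into the left-hand side. This yields $\rho\,\ip{\nabla f(\xx)}{\xx - \vv} = \ip{\nabla f(\xx)}{\xx - \xx^*} \ge f(\xx) - f^*$, so $\ip{\nabla f(\xx)}{\xx - \vv} \ge (f(\xx) - f^*)/\rho$. Since this particular $\vv$ lies in $\cC$, we conclude $\max_{\vv \in \cC}\ip{\nabla f(\xx)}{\xx - \vv} \ge (f(\xx) - f^*)/\rho = (f(\xx)-f^*)/\rad(X^*,\xx)$, which is \eqref{eq.scaling.rad}.

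For the second claim \eqref{eq.decrease}, I would first dispose of the trivial case $\xx \in X^*$: then $f(\xx) - f^* = 0$ and the right-hand side $\mu^\theta(f(\xx)-f^*)^{1-\theta}$ is $0$ (using $\theta \in [0,1/2]$ so $1-\theta > 0$), while the left-hand side is $\ge 0$ by taking $\vv = \xx$. For $\xx \in \cC \setminus X^*$, the $(\mu,\theta)$-error bound gives $\rad(X^*,\xx) \le \bigl((f(\xx)-f^*)/\mu\bigr)^\theta$. Combining with \eqref{eq.scaling.rad},
\[
\max_{\vv \in \cC}\ip{\nabla f(\xx)}{\xx - \vv} \ge \frac{f(\xx)-f^*}{\rad(X^*,\xx)} \ge \frac{f(\xx)-f^*}{\bigl((f(\xx)-f^*)/\mu\bigr)^\theta} = \mu^\theta (f(\xx)-f^*)^{1-\theta},
\]
as desired.

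I do not anticipate a serious obstacle here; this lemma is essentially a one-line consequence of the gradient inequality once the radial scaling identity $\xx^* - \xx = \rho(\vv-\xx)$ is unwound. The only points requiring a little care are: checking that the minimizing $\vv$ in the definition of $\rad(X^*,\xx)$ and the minimizing $\xx^*$ both exist (compactness of $\cC$, closedness of $X^*$) so that the $\min$ is attained; confirming $\rad(X^*,\xx) > 0$ for $\xx \notin X^*$ so that division is legitimate; and handling the boundary case $\xx \in X^*$ in the second statement separately, as done above.
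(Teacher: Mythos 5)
Your proposal is correct and follows essentially the same argument as the paper: pick $\xx^*$ attaining $\rad(X^*,\xx)$, use the defining identity $\xx^*-\xx=\rho(\vv-\xx)$ together with the gradient inequality for convex $f$, and then plug in the error bound for the second claim (with the trivial case $\xx\in X^*$ handled separately). The extra care you take about attainment of the minima and positivity of $\rho$ is fine but not a point of divergence.
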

\begin{proof}{Proof.} Suppose that $\xx\in \cC\setminus X^*$. Let $\xx^*\in X^*$ be such that $\rad(\xx^*,\xx) = \rad(X^*,\xx)$. The definition of $\rad$ implies that there exists $\ww\in\cC$ such that $\xx^*-\xx = \rad(X^*,\xx)\cdot(\ww-\xx)$. Thus,
\[
        \max_{\vv\in \cC} \ip{\nabla f(\xx)}{\xx-\vv} \ge \ip{\nabla f(\xx)}{\xx-\ww} = \frac{\ip{\nabla f(\xx)}{\xx-\xx^*}}{\rad(X^*,\xx)}  \ge  \frac{f(\xx) - f^*}{\rad(X^*,\xx)},
\]
    where the last inequality follows from convexity of $f$.  Thus~\eqref{eq.scaling.rad} follows.  To show~\eqref{eq.decrease}, suppose $\xx \in \cC$.  If $\xx\in X^*$ then~\eqref{eq.decrease}  trivially holds.  Otherwise, when $\xx \in \cC\setminus X^*$, observe that~\eqref{eq.scaling.rad} and the $(\mu,\theta)$-radial error bound imply that
\[
        \max_{\vv\in \cC} \ip{\nabla f(\xx)}{\xx-\vv}  \ge  \frac{f(\xx) - f^*}{\rad(X^*,\xx)}   =
        \frac{(f(\xx) - f^*)^{1-\theta}(f(\xx) - f^*)^\theta}{\rad(X^*,\xx)} \ge \mu^\theta(f(\xx) - f^*)^{1-\theta}.
\]
\end{proof}
\begin{proof}{Proof of Theorem~\ref{thm.vanilla}.}
    The $L$-curvature and the choice of step-size imply that for $t\in\N$ either~\eqref{eq.short.step} holds when $\eta_t<1$ or~\eqref{eq.short.step.2} holds when $\eta_t=1$. Thus, Lemma~\ref{lem.scaling.radial} implies that for each $t\in\N$ one of the following two bounds holds. If $\eta_t <1$ then
    \[
        f(\xx^{(t+1)})  \le f(\xx^{(t)}) - \frac{\ip{\nabla f(\xx^{(t)})}{\xx^{(t)}-\vv^{(t)}}^2}{2L}   
                        \le f(\xx^{(t)}) - \frac{\mu^{2\theta}(f(\xx^{(t)}) - f^*)^{2(1-\theta)}}{2 L},
    \]
and if $\eta_t=1$ then
    \[
    f(\xx^{(t+1)})  \le f(\xx^{(t)}) - \frac{\ip{\nabla f(\xx^{(t)})}{\xx^{(t)}-\vv^{(t)}}}{2} \le  f(\xx^{(t)}) - \frac{f(\xx^{(t)})-f^*}{2}.
    \]    
    Hence for all $t\in \N$,
              \begin{equation}\label{eq.decrease.vfw}
        f(\xx^{(t+1)})-f^* \le (f(\xx^{(t)})-f^*)\left(1-\min\left\{\frac{\mu^{2\theta}}{2L}(f(\xx^{(t)})-f^*)^{1-2\theta},\frac{1}{2}\right\}\right) .
    \end{equation}
 Thus~\eqref{eq.linear.vfw} readily follows when $\theta = 1/2$.  On the other hand, when $\theta\in[0,1/2)$, Inequality~\eqref{eq.decrease.vfw} yields~\eqref{eq.linearregime.vfw}
 for $t=0,1,\dots,t_0$. 
Finally for $t\in \N$ with $t>t_0$, Inequality~\eqref{eq.sublinear.vfw} follows from~\eqref{eq.decrease.vfw} by applying
    Lemma~\ref{lemma:borwein} with $p=1-2\theta$, $\sigma_t= \frac{\mu^{2\theta}}{2L}$, and $\beta_t = f(\xx^{(t)})-f^*$.
\end{proof}

We next discuss common sufficient conditions for the assumptions of Theorem~\ref{thm.vanilla} to hold.  First, as noted in Section~\ref{sec.prelims}, the pair $(\cC,f)$ has $L$-curvature for some $L > 0$ whenever $f$ is smooth with respect to a norm on $\cC$.  Let $\relint(\cC)$ and $\relbnd(\cC)$ denote respectively the relative interior and relative boundary of $\cC$. 
The following proposition shows that when $X^*\subseteq\relint(\cC)$, an error bound relative to $\rad$ holds provided $f$ satisfies a Hölderian error bound over $\cC$ relative to some norm. 

\begin{proposition}[Radial error bound]\label{prop:int_aff_err}
Let $\R^n$ be endowed with a norm $\|\cdot\|$, let $\cC \subseteq \R^n$ be a polytope, and let $f\colon \cC \to \R$ be convex.
Suppose that $X^* \subseteq \relint(\cC)$ and $f$ satisfies a $(\mu, \theta)$-Hölderian error bound over $\cC$ with respect to the norm $\|\cdot\|$ for some $\mu>0$ and $\theta \in(0,1/2]$. Then $(\cC,f)$ satisfies a $(\tilde \mu, \theta)$-error bound relative to $\rad$ for 
    \[
    \tilde \mu = \mu\cdot \dist(X^*,\relbnd(\cC))^{1/\theta}.
    \]
\end{proposition}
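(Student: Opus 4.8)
The plan is to relate the radial distance $\rad(X^*,\xx)$ to the norm distance $\min_{\xx^*\in X^*}\|\xx^*-\xx\|$ and then invoke the Hölderian error bound. The key geometric observation is this: if $\xx^*\in\relint(\cC)$, then a ray from $\xx$ through $\xx^*$ can be extended a definite amount past $\xx^*$ before leaving $\cC$, and the amount it can be extended is controlled by $\dist(\xx^*,\relbnd(\cC))$. Concretely, first I would fix $\xx\in\cC$, and let $\xx^*\in X^*$ attain $\min_{\xx^*\in X^*}\|\xx^*-\xx\| =: \|\xx^*-\xx\|$. If $\xx=\xx^*$ there is nothing to prove, so assume $\xx\ne\xx^*$. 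I would then exhibit a point $\ww\in\cC$ of the form $\ww = \xx^* + s(\xx^*-\xx)$ for a suitable $s>0$, so that $\xx^* - \xx = \frac{1}{1+s}(\ww-\xx)$, which by definition of the radial distance gives $\rad(X^*,\xx)\le \rad(\xx^*,\xx)\le \frac{1}{1+s}$.

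The crux is choosing $s$. Set $\rho := \dist(X^*,\relbnd(\cC))\ge \dist(\xx^*,\relbnd(\cC))\ge\rho$ — more precisely I only need that any point within norm distance $\rho$ of $\xx^*$ inside $\aff(\cC)$ stays in $\cC$; since $\xx^*\in\relint(\cC)$ and $\dist(\xx^*,\relbnd(\cC))\ge\rho$ (as $\xx^*\in X^*$), the relative ball of radius $\rho$ around $\xx^*$ in $\aff(\cC)$ is contained in $\cC$. Now the point $\ww = \xx^* + s(\xx^*-\xx)$ lies in $\aff(\cC)$ (it is an affine combination of $\xx^*,\xx\in\cC$) and satisfies $\|\ww-\xx^*\| = s\|\xx^*-\xx\|$, so choosing $s = \rho/\|\xx^*-\xx\|$ ensures $\ww\in\cC$. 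With this choice, $\frac{1}{1+s} = \frac{\|\xx^*-\xx\|}{\|\xx^*-\xx\|+\rho}\le \frac{\|\xx^*-\xx\|}{\rho}$, hence
\[
\rad(X^*,\xx) \le \frac{\|\xx^*-\xx\|}{\rho} = \frac{\min_{\xx^*\in X^*}\|\xx^*-\xx\|}{\dist(X^*,\relbnd(\cC))}.
\]

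To finish, I would apply the $(\mu,\theta)$-Hölderian error bound, which gives $\min_{\xx^*\in X^*}\|\xx^*-\xx\| \le \left(\frac{f(\xx)-f^*}{\mu}\right)^\theta$. Combining,
\[
\rad(X^*,\xx) \le \frac{1}{\dist(X^*,\relbnd(\cC))}\left(\frac{f(\xx)-f^*}{\mu}\right)^\theta = \left(\frac{f(\xx)-f^*}{\mu\cdot\dist(X^*,\relbnd(\cC))^{1/\theta}}\right)^\theta = \left(\frac{f(\xx)-f^*}{\tilde\mu}\right)^\theta,
\]
which is exactly the $(\tilde\mu,\theta)$-error bound relative to $\rad$. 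One should also double-check the edge case: when $\xx\in\relbnd(\cC)$ the argument still works since we only used $\xx\in\cC$; and the value $\rad(X^*,\xx)$ is automatically in $[0,1]$ as required. The main obstacle — really the only nontrivial point — is the geometric claim that $\ww$ as constructed lies in $\cC$; this needs the fact that $\relint(\cC)$ contains a relative ball of radius $\dist(\xx^*,\relbnd(\cC))$ around $\xx^*$ inside $\aff(\cC)$, together with $\ww\in\aff(\cC)$, which follows because $\ww$ is an affine (not convex) combination of the two points $\xx^*,\xx\in\cC\subseteq\aff(\cC)$. Everything else is a direct substitution.
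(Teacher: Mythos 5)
Your proof is correct and follows essentially the same route as the paper: both establish the key inequality $\min_{\xx^*\in X^*}\|\xx^*-\xx\| \ge \rad(X^*,\xx)\cdot\dist(X^*,\relbnd(\cC))$ and then substitute the H\"olderian error bound; the paper lower-bounds via the optimal $\uu\in\relbnd(\cC)$ attaining $\rad(\xx^*,\xx)$, while you upper-bound $\rad(\xx^*,\xx)$ by exhibiting the feasible point $\ww=\xx^*+s(\xx^*-\xx)$, which is just the other direction of the same geometric fact. (Only cosmetic issue: the chain ``$\rho:=\dist(X^*,\relbnd(\cC))\ge\dist(\xx^*,\relbnd(\cC))\ge\rho$'' has the first inequality reversed; what you actually need and later use, namely $\dist(\xx^*,\relbnd(\cC))\ge\rho$, is correct.)
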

\begin{proof}{Proof.}
The construction of $\rad$ implies that for all $\xx^*\in X^*$ and $\xx\in \cC$
\[
\|\xx^*-\xx\| = \rad(\xx^*,\xx) \cdot \|\xx-\uu\| \ge \rad(\xx^*,\xx) \cdot \|\xx^*-\uu\|
\]
 for some $\uu \in \relbnd(\cC)$. Therefore for all $\xx\in \cC$
\[
\min_{\xx^*\in X^*} \|\xx^*-\xx\| \ge \rad(X^*,\xx) \cdot  \dist(X^*,\relbnd(\cC)).
\]
Since $f$ satisfies a $(\mu, \theta)$-Hölderian error bound over $\cC$ with respect to the norm $\|\cdot\|$, it follows that for all $\xx\in \cC$
\[
\left(\frac{f(\xx)-f^*}{\mu}\right)^\theta \geq \min_{\xx^*\in X^*} \|\xx^*-\xx\| \ge \rad(X^*,\xx) \cdot  \dist(X^*,\relbnd(\cC)),
\]
and thus 
\[
\left(\frac{f(\xx)-f^*}{\tilde \mu}\right)^\theta \geq \rad(X^*,\xx).
\]
\end{proof}

The lower bound in Proposition~\ref{prop:int_aff_err} is zero when $X^* \cap \relbnd(\cC) \ne \emptyset$. Thus in this case Theorem~\ref{thm.vanilla} fails to show linear convergence even when $f$ is smooth and $(\cC,f)$ satisfy the $(\mu,1/2)$ H\"olderian error bound relative to some norm in $\R^n$.
 This is consistent with the shortcoming of FW discovered and documented by Wolfe~\cite{wolfe1970convergence}.
Indeed, Wolfe~\cite{wolfe1970convergence} characterizes instances of \eqref{eq.opt} where $\cC$ is a polytope and $f$ is smooth and strongly convex for which FW with line-search or short-step cannot converge faster than $\Omega(t^{-1-\epsilon})$ for any $\epsilon > 0$. This phenomenon, referred to as \emph{Wolfe's lower bound}, was the core motivation behind the study of FW variants to overcome the limitations of the vanilla version.

\subsection{Away-step Frank-Wolfe algorithm and blended pairwise Frank-Wolfe algorithm}\label{sec.away_blended}

\begin{algorithm}[t]
    \caption{Away-step Frank-Wolfe algorithm (AFW) and blended pairwise Frank-Wolfe algorithm (BPFW)}\label{alg:tfw.full}
\begin{algorithmic}    
    \State {\bf Input:} {$\xx^{(0)} \in \vertices(\cC)$, that is, $ \xx^{(0)} =\cV\llambda^{(0)}\in\cC$ for some $\llambda^{(0)}\in\Delta(\cV)$ with $|S(\llambda^{(0)}|=1.$
    }
    \State{\bf Output:} {$\xx^{(t)}= \cV\llambda^{(t)}\in\cC$ for some $\llambda^{(t)}\in\Delta(\cV)$ for $t\in\N$.}
    \hrulealg
    \For{$t\in\N$}
    \State{compute $\dd^{(t)}_{\text{FW}}, \dd^{(t)}_{\text{AFW}}, \; \dd^{(t)}_{\text{BPFW}}$ as in~\eqref{eq.directions} for $\xx = \xx^{(t)}$ and $S = S(\llambda^{(t)})$}
    \State{select either $\dd^{(t)}\in \{\dd^{(t)}_{\text{FW}}, \dd^{(t)}_{\text{AFW}}\}$ in AFW or $\dd^{(t)}\in \{\dd^{(t)}_{\text{FW}},\; \dd^{(t)}_{\text{BPFW}}\}$ in BPFW that minimizes $\ip{\nabla f(\xx^{(t)})}{\dd^{(t)}}$}\label{line.tfw.full.selection}
    \State{compute $\eta_{t,\max}$ as in~\eqref{eq.max.step-size} for $\llambda = \llambda^{(t)}$} 
    \State{$\xx^{(t+1)} \gets \xx^{(t)} + \eta_t \dd^{(t)}$ for some $\eta_t \in [0,\eta_{t,\max}]$}
    \State{update $\llambda^{(t+1)}\in\Delta(\cV)$ as in~\eqref{eq.update.S.alpha} for $\llambda = \llambda^{(t)}$}
	\EndFor
\end{algorithmic}    
\end{algorithm}

The main update in the vanilla Frank-Wolfe algorithm can be written as
\[
    \xx^{(t+1)} \gets \xx^{(t)} + \eta_t \dd^{(t)}
\]
where $\dd^{(t)} = \vv^{(t)}- \xx^{(t)}$ is the \emph{Frank-Wolfe direction} defined via $\vv^{(t)} = \argmin_{\vv\in \cC} \langle \nabla f(\xx^{(t)}), \vv \rangle$.
As noted above, FW, limited to FW directions, does not admit fast convergence rates when there are optimal solutions on the relative boundary of $\cC$.
To overcome this limitation, FW variants have been studied that admit more flexible choices of $\dd^{(t)}$. In this section, we focus on the away-step Frank-Wolfe algorithm (AFW) \cite{beck2017linearly,wolfe1970convergence, lacoste2015global} and the blended pairwise Frank-Wolfe algorithm (BPFW) \cite{tsuji2022pairwise}. To facilitate a richer choice of search directions, both  variants maintain and rely on a {\em vertex representation} of $\xx^{(t)}$ as we next explain. 

Let $\cC \subseteq \R^n$ be a polytope with vertex set $\cV := \vertices(\cC)$ and let
\[
    \Delta(\cV):=
    \left\{\llambda = (\lambda_{\vv})_{\vv\in \cV} \mid \lambda_{\vv} \ge 0  \ \text{for } \vv \in \cV \ \text{and} \sum_{\vv\in \cV} \lambda_{\vv} =1\right\}.
\]
Any $\xx\in \cC$ has a \emph{vertex representation} of the form $\xx = \sum_{\vv\in \cV} \lambda_{\vv} \vv$ for some $\llambda \in \Delta(\cV)$.
For ease of notation, we write $\cV \llambda$ for $\sum_{\vv\in \cV} \lambda_{\vv} \vv$.  For $\llambda \in \Delta(\cV)$, the \emph{support set} of $\llambda$  is defined as:
\[    S(\llambda) = \{\vv\in \cV \mid \lambda_{\vv} > 0\}.
\]
For $\xx\in \cC$, we further define
\[
    \SS(\xx) = \{S(\llambda) \mid \xx = \cV\llambda  \ \text{for some } \llambda \in \Delta(\cV)\}.
\]
Suppose that $\xx\in \cC$ and $\xx = \cV\llambda$ for some 
$\llambda\in\Delta(\cV)$.  Let $S = S(\llambda)$ and define the \emph{away vertex} $\aa\in S$, the \emph{local FW vertex} $\zz \in S$, and the \emph{global FW vertex} $\vv\in \cV$ as follows:
\[
    \aa  = \argmax_{\yy\in S} \ip{\nabla f(\xx)}{\yy}, \qquad \zz = \argmin_{\yy\in S} \ip{\nabla f(\xx)}{\yy}, \qquad
    \vv  = \argmin_{\yy\in \cC} \ip{\nabla f(\xx)}{\yy}.
\]
The above away, local FW, and global FW vertices in turn yield the following \emph{FW}, \emph{away}, and \emph{blended pairwise} directions:
\begin{equation}\label{eq.directions}
    \dd_{\text{FW}} = \vv-\xx, \qquad \dd_{\text{AFW}} = \xx-\aa, \qquad   \dd_{\text{BPFW}} = \zz - \aa.
\end{equation}
Notice that the away and blended pairwise directions reduce the weight of a vertex in the vertex representation of $\xx$ and increase the weight of other vertices.  In contrast, the FW direction increases the weight of the vertex $\vv$ and reduces the weight of all other vertices equally.
For $\dd \in \{\dd_{\text{FW}}, \dd_{\text{AFW}}, \dd_{\text{BPFW}}\}$, there is a natural maximum step-size $\eta_{\max}$ that ensures that $\xx+\eta \dd \in \cC$ for $\eta\in [0,\eta_{\max}]$.  The precise calculation of $\eta_{\max}$ requires an explicit vertex representation of $\xx$  of the form $\xx = \cV\llambda$ such that $S(\llambda) = S$.
For each of the directions in~\eqref{eq.directions}, the maximum step-size $\eta_{\max}$ is as follows, see \cite{wolfe1970convergence,lacoste2015global,tsuji2022pairwise}:
\begin{equation}\label{eq.max.step-size}
    \eta_{\max} = \left\{\begin{array}{rl}
        1 ,                                    & \text{if} \ \dd=\dd_{\text{FW}} = \vv-\xx    \\
        \frac{\lambda_{\aa}}{1-\lambda_{\aa}}, & \text{if} \ \dd =\dd_{\text{AFW}}= \xx-\aa   \\
        \lambda_{\aa}           ,              & \text{if} \ \dd =\dd_{\text{BPFW}}= \zz-\aa.
    \end{array} \right.
\end{equation}
Furthermore, for $\eta\in [0,\eta_{\max}]$ the point $\xx^+ = \xx+\eta\dd$ has a vertex representation $\xx^+ = \cV\llambda^+$ where
\begin{equation}\label{eq.update.S.alpha}
    \lambda^{+}_{\ss} = \left\{\begin{array}{rl} (1-\eta)\lambda_{\ss}+\eta, & \ \text{if} \ \dd=\dd_{\text{FW}} = \vv-\xx \ \text{and} \ \ss=\vv                  \\
        (1-\eta)\lambda_{\ss} ,          & \ \text{if} \ \dd=\dd_{\text{FW}} = \vv-\xx \ \text{and} \ \ss\ne\vv                \\ [1ex]
        (1+\eta)\lambda_{\ss}-\eta   ,   & \ \text{if} \ \dd=\dd_{\text{AFW}} = \xx-\aa \ \text{and} \ \ss=\aa                 \\
        (1+\eta)\lambda_{\ss}     ,      & \ \text{if} \ \dd=\dd_{\text{AFW}} = \xx-\aa \ \text{and} \ \ss\ne\aa               \\[1ex]
        \lambda_{\ss}-\eta     ,         & \ \text{if} \ \dd=\dd_{\text{BPFW}} = \zz-\aa \ \text{and} \ \ss=\aa                \\
        \lambda_{\ss}+\eta    ,          & \ \text{if} \ \dd=\dd_{\text{BPFW}} = \zz-\aa \ \text{and} \ \ss=\zz                \\
        \lambda_{\ss}         ,          & \ \text{if} \ \dd=\dd_{\text{BPFW}} = \zz-\aa \ \text{and} \ \ss\not\in\{\aa,\zz\}.
    \end{array} \right.
\end{equation}

We next establish affine-invariant convergence results for  Algorithm~\ref{alg:tfw.full} analogous to Theorem~\ref{thm.vanilla}.  To do so, we rely on the following \emph{vertex distance} in lieu of the radial distance.

\begin{definition}[Vertex distance]\label{def.vertex_distance}
    Let $\cC\subseteq \R^n$ be a polytope.  Define the \emph{vertex distance} $\v\colon \cC\times\cC\to[0,1]$ as follows. For $\yy,\xx \in\cC$ let the vertex distance between $\yy$ and $\xx$ be
\[
          \v(\yy,\xx)                                                                                                                             
          = \max_{S\in\SS(\xx)} \min\{\gamma \geq 0 \mid  \yy-\xx = \gamma (\vv-\uu) \;\text{for some}\;\vv\in C\;\text{and}\;\uu\in\conv(S)\}.
\]
\end{definition}
Note that $\v(\yy, \xx)= 0$ if and only if $\yy = \xx$.  Observe that if $\cC\subseteq\R^n$ is a polytope, then for all $\yy,\xx\in \cC$, it holds that
$
    \v(\yy,\xx) \leq \rad(\yy,\xx).
$ 
In particular, if $(\cC,f)$ satisfies the $(\mu,\theta)$-error bound relative to $\rad$ then $(\cC,f)$ also satisfies the $(\mu,\theta)$-error bound relative to $\v$ and it would typically do so for a larger $\mu$.

Figure~\ref{fig:vertex} displays visualizations of the function $\v(\yy,\cdot)$ on $\cC$ for $\cC=[0,1]\times [0,1],\; \yy = (0,0)$ and also for $\cC=[0,2]\times [0,1],\; \yy = (0,0)$.
As in Figure~\ref{fig:radial}, Figure~\ref{fig:vertex}(b) is a stretched version of Figure~\ref{fig:vertex}(a) due of the affine invariance of $\v$.

\begin{figure}[!t]
\captionsetup[subfigure]{justification=centering}
 \hspace{-.6in}
\begin{tabular}{c c} 
\begin{subfigure}{.55\textwidth}
    \centering
        \includegraphics[width=.62\textwidth]{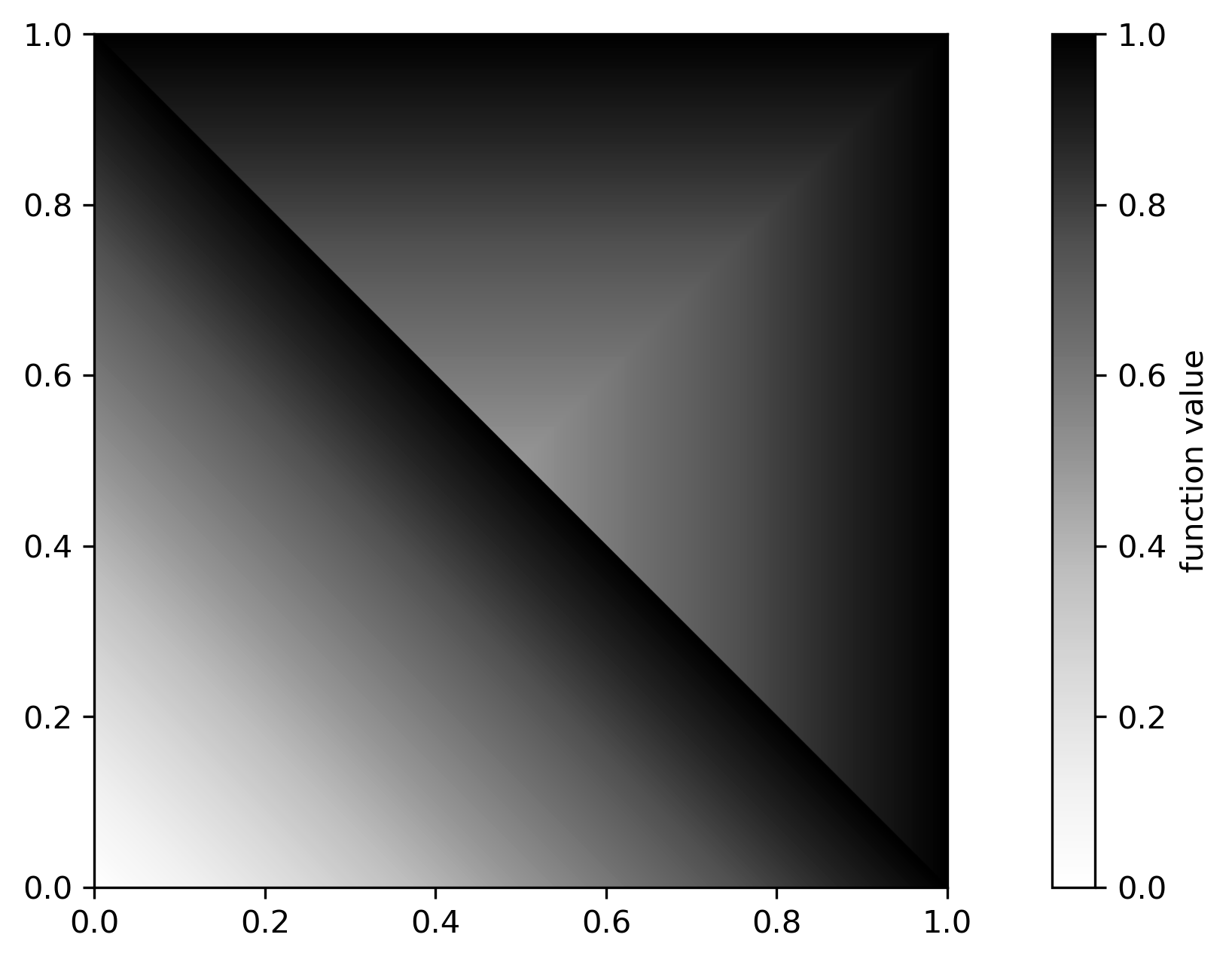}
        \caption{$\cC = [0,1]\times [0,1], \yy = (0,0)$.}
    \end{subfigure} & \hspace{-.5in}
    \begin{subfigure}{.575\textwidth}
    \centering
        \includegraphics[width=1\textwidth]{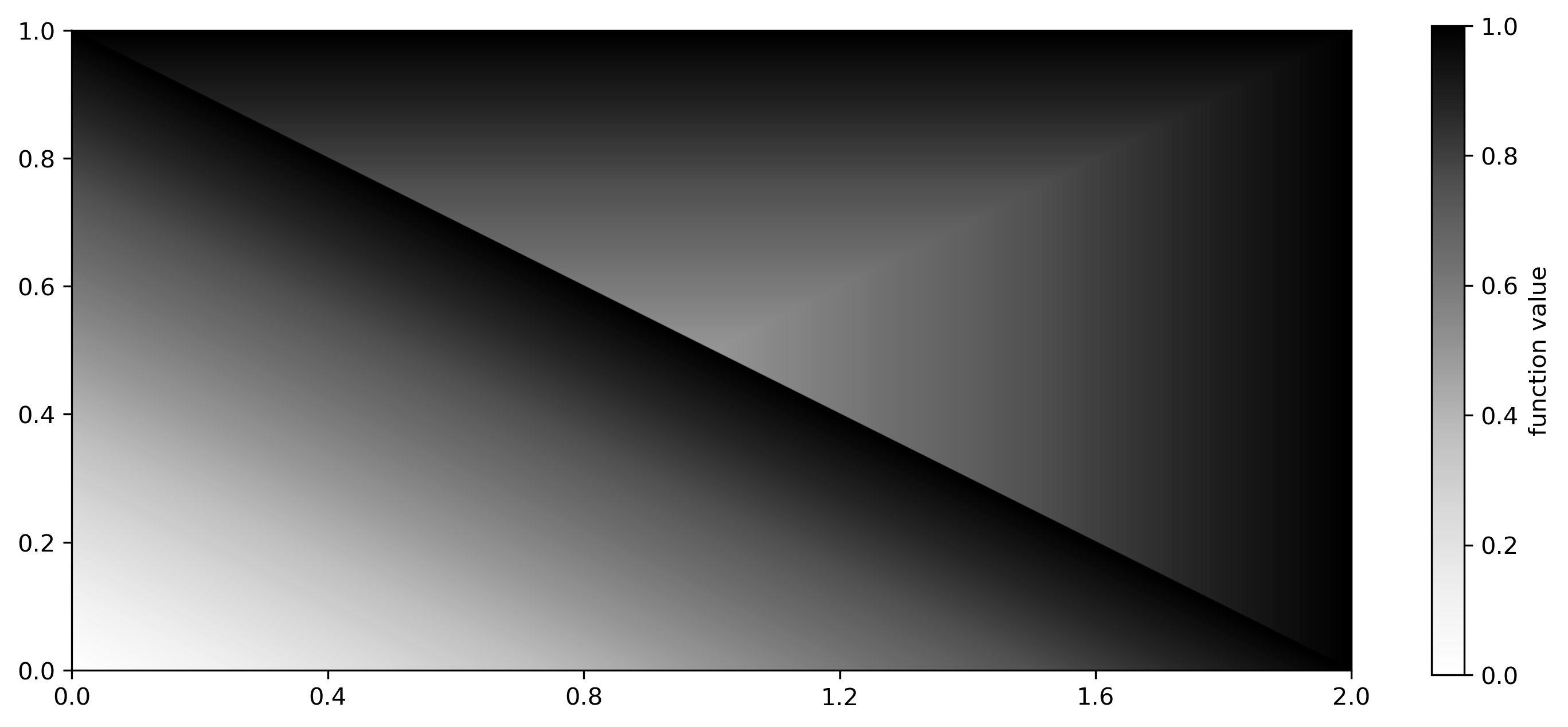}
        \caption{$\cC = [0,2]\times [0,1], \yy = (0,0)$.}
    \end{subfigure} 
\end{tabular}
\caption{
Visualization of the function
 $\v(\yy,\cdot)$ on $\cC$ for the vertex distance 
$\v$ of $\cC$.  The shade of gray indicates the function value. Points $\xx\in\cC$ with $\v(\yy,\xx) = 0$ are in white and points $\xx\in\cC$ with $\v(\yy,\xx)=1$ are in black.}\label{fig:vertex}
\end{figure}

As we detail below in Theorem~\ref{thm.variants}, the convergence of Algorithm~\ref{alg:tfw.full} is a consequence of the extended curvature  of $(\cC,f)$ and the error bound of $(\cC,f)$ relative to $\v$. 
Suppose that $(\cC,f)$ has extended $L$-curvature.  We consider two possible choices of step-size $\eta_t$ in Line~\ref{line.tfw.full.selection} of Algorithm~\ref{alg:tfw.full}.  The first one is \emph{line-search}:
\[
    \eta_t = \argmin_{\eta\in [0,\eta_{t,\max}]} f(\xx^{(t)}+\eta \dd^{(t)}).
\]
The second one is the \emph{short-step}:
\[
    \eta_t = \argmin_{\eta\in [0,\eta_{t,\max}]}\left\{f(\xx^{(t)}) + \eta \ip{ \nabla f(\xx^{(t)})}{\dd^{(t)}} + \frac{L\eta^2}{2}\right\}.
\]
It is easy to see that for either the line-search or short-step the following inequality holds when $\eta_t < \eta_{t,\max}$
\begin{equation}\label{eq.eta.loose}
    f(\xx^{(t+1)}) \le f(\xx^{(t)}) - \frac{\ip{\nabla f(\xx^{(t)})}{\dd^{(t)}}^2}{2L},
\end{equation}
and the following inequality holds when $\eta_t = \eta_{t,\max}\ge 1$
\begin{equation}\label{eq.eta.tight}
    f(\xx^{(t+1)}) \le f(\xx^{(t)}) + \frac{\ip{\nabla f(\xx^{(t)})}{\dd^{(t)}}}{2}.
\end{equation}

\begin{theorem}[AFW and BPFW]\label{thm.variants}
    Let $\cC \subseteq \R^n$ be a polytope, let $f\colon \cC \to \R$ be convex and differentiable in an open set containing $\cC$, and suppose that $(\cC,f)$ has extended $L$-curvature for some $L>0$,  and satisfies the $(\mu,\theta)$-error bound relative to $\v$ for some  $\mu > 0$ and $\theta\in[0,1/2]$.
    Then, the iterates of Algorithm~\ref{alg:tfw.full} with line-search or short-step satisfy the following convergence rates.
    
    \begin{itemize}
\item     When $\theta = 1/2$ the following linear convergence rate holds for all $t\in\N$:
    \begin{equation}\label{eq.linear.fwa}
        f(\xx^{(t)})-f^* \leq ( f(\xx^{(0)}) - f^* )\left(1-\min\left\{\frac{\mu}{8L}, \frac{1}{2}\right\}\right)^{\lceil t/2\rceil}.
    \end{equation}
\item     When $\theta \in [0,1/2)$, the following initial linear convergence rate holds for $t\in \{0,1,\dots,t_0\}$
    \begin{equation}\label{eq.linearregime.fwa}
        f(\xx^{(t)})-f^* \leq \frac{f(\xx^{(0)}) -f^*}{2^{\lceil t/2 \rceil}},
    \end{equation}
 where $t_0\in\N$ is the smallest $t$ such that
\[
        \frac{\mu^{2\theta}(f(\xx^{(t)}) - f^*)^{1-2\theta}}{8L} \le \frac{1}{2}.
\]
For $t\in \N$ with $t_0 < t \le t_1:= |S(\llambda^{(t_0)})|+t_0-1$ the following  holds:
\begin{equation}\label{eq.constantregime.fwa}
f(\xx^{(t)})-f^* \le f(\xx^{(t_0)}) -f^*.
\end{equation}
Finally, for $t\in \N$ with $t >  t_1$, the following sublinear convergence rate holds:
    \begin{equation}\label{eq.sublinearregime.fwa}
          f(\xx^{(t)})-f^* 
          \leq\left((f(\xx^{(t_1)}) -f^*)^{(2\theta-1)} + \frac{(1-2\theta)\mu^{2\theta}}{8L} \cdot \left\lceil \frac{t-t_1}{2} \right\rceil\right)^{\frac{1}{2\theta-1}}.
    \end{equation}
\end{itemize}
\end{theorem}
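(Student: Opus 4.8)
The plan is to mimic the structure of the proof of Theorem~\ref{thm.vanilla}, with two new ingredients that account for the active-set machinery. First, I would establish a \emph{scaling lemma for the vertex distance} analogous to Lemma~\ref{lem.scaling.radial}: for $\xx = \cV\llambda \in \cC\setminus X^*$ with support $S = S(\llambda)$, the pairwise gap $\ip{\nabla f(\xx)}{\aa - \zz}$ (where $\aa,\zz$ are the away and local FW vertices) is bounded below by $(f(\xx)-f^*)/\v(X^*,\xx)$. The idea is that if $\xx^* \in X^*$ attains $\v(X^*,\xx)$ for the support $S$, then $\xx^* - \xx = \gamma(\vv - \uu)$ for some $\vv\in\cC$, $\uu\in\conv(S)$, and $\gamma = \v(X^*,\xx)$; convexity of $f$ then gives $\ip{\nabla f(\xx)}{\xx - \xx^*} \ge f(\xx)-f^*$, while $\ip{\nabla f(\xx)}{\vv-\uu} \le \ip{\nabla f(\xx)}{\zz - \aa}$ by definition of $\vv$ (minimizer over $\cC$) and $\aa$ (maximizer over $S$, hence over $\conv(S)$). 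Combining, $\ip{\nabla f(\xx)}{\aa-\zz} \ge (f(\xx)-f^*)/\v(X^*,\xx) \ge \mu^\theta(f(\xx)-f^*)^{1-\theta}$ under the error bound.

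The second ingredient is the standard AFW/BPFW dichotomy between \emph{good steps} and \emph{drop steps}. Since the selected direction $\dd^{(t)}$ minimizes $\ip{\nabla f(\xx^{(t)})}{\dd}$ over $\{\dd_{\text{FW}},\dd_{\text{AFW}}\}$ (resp.\ $\{\dd_{\text{FW}},\dd_{\text{BPFW}}\}$), one checks $-\ip{\nabla f(\xx^{(t)})}{\dd^{(t)}} \ge \tfrac12\max\{\ip{\nabla f}{\xx-\vv},\ip{\nabla f}{\aa-\zz}\} \ge \tfrac12\ip{\nabla f(\xx^{(t)})}{\aa-\zz}$ in the BPFW case, and an analogous bound with the FW-vs-away pair in the AFW case; this is where the factor $8 = 2\cdot 2 \cdot 2$ (one $2$ from the min-of-two-directions, one from the $1/(2L)$ progress bound, one from a further halving) in the constant $\mu/(8L)$ comes from. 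When the step is not maximal, progress bound~\eqref{eq.eta.loose} combined with the scaling lemma yields the recursion
\[
f(\xx^{(t+1)}) - f^* \le (f(\xx^{(t)})-f^*)\left(1 - \min\left\{\tfrac{\mu^{2\theta}}{8L}(f(\xx^{(t)})-f^*)^{1-2\theta}, \tfrac12\right\}\right);
\]
when $\eta_t = \eta_{t,\max}$ and the step is a \emph{drop step} (a vertex leaves $S$), $f$ does not increase by~\eqref{eq.eta.tight} together with $\ip{\nabla f(\xx^{(t)})}{\dd^{(t)}}\le 0$, but we make no quantitative progress. The key counting fact is that the number of drop steps up to iteration $t$ is at most the number of good steps plus $|S(\llambda^{(0)})| - 1 \le $ (the initial support size), because each good step adds at most one vertex to the active set and each drop step removes at least one; starting from a single vertex this gives at least $\lceil t/2\rceil$ good steps among the first $t$, which is the source of the $\lceil t/2\rceil$ exponents and the $\lceil (t-t_1)/2\rceil$ factor.

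With these two ingredients, the three regimes follow exactly as in Theorem~\ref{thm.vanilla}: when $\theta = 1/2$ the recursion is a genuine linear contraction on good steps, and discarding drop steps costs the $\lceil t/2 \rceil$ in~\eqref{eq.linear.fwa}; when $\theta \in [0,1/2)$ the recursion contracts by a factor $\tfrac12$ per good step until $f(\xx^{(t)})-f^*$ drops below the threshold defining $t_0$, giving~\eqref{eq.linearregime.fwa}; the interval $(t_0,t_1]$ of length $|S(\llambda^{(t_0)})|-1$ absorbs a worst-case run of consecutive drop steps with no progress, giving~\eqref{eq.constantregime.fwa}; and afterwards Lemma~\ref{lemma:borwein} applied with $p = 1-2\theta$, $\sigma = \mu^{2\theta}/(8L)$, and $\beta_t$ indexed by good-step count yields~\eqref{eq.sublinearregime.fwa}, the $\lceil(t-t_1)/2\rceil$ again reflecting that at least half the steps past $t_1$ are good. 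The main obstacle I anticipate is the bookkeeping: making the good-step/drop-step accounting precise (especially that $\eta_t = \eta_{t,\max} < 1$ can still occur for AFW away directions, so the split is really ``progress step'' vs.\ ``drop step'' rather than ``$\eta_t<\eta_{t,\max}$'' vs.\ ``$\eta_t=\eta_{t,\max}$''), verifying that the BPFW maximal-but-non-drop case also yields the~\eqref{eq.eta.loose}-type bound, and carefully tracking how $t_1$ is defined relative to the support size $|S(\llambda^{(t_0)})|$ so that the constant-regime bound~\eqref{eq.constantregime.fwa} covers the worst case and the sublinear phase starts from a clean index.
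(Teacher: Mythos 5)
Your overall architecture — a vertex-distance scaling lemma, the progress/drop-step dichotomy with the three-way case split on $\eta_t$ versus $\eta_{t,\max}$, the support-size counting that yields the $\lceil t/2\rceil$ exponents, and the final application of Lemma~\ref{lemma:borwein} — is exactly the paper's proof. However, there is one step that fails as you have written it: your scaling lemma bounds the \emph{local} pairwise gap $\ip{\nabla f(\xx)}{\aa-\zz}$, where $\zz$ is the local FW vertex (minimizer over $S$), from below by $(f(\xx)-f^*)/\v(X^*,\xx)$. This is false. In the vertex-distance decomposition $\xx^*-\xx=\gamma(\ww-\uu)$ with $\ww\in\cC$ and $\uu\in\conv(S)$, you can control $\ip{\nabla f(\xx)}{\uu}\le\ip{\nabla f(\xx)}{\aa}$, but $\ww$ is an arbitrary point of $\cC$, so the only valid lower bound on $-\ip{\nabla f(\xx)}{\ww}$ uses the \emph{global} FW vertex $\vv=\argmin_{\yy\in\cC}\ip{\nabla f(\xx)}{\yy}$, not $\zz$. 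Concretely, when $S$ is a singleton (e.g., at $t=0$, since $\xx^{(0)}$ is a vertex) one has $\aa=\zz$ and $\ip{\nabla f(\xx)}{\aa-\zz}=0$ while $f(\xx)-f^*>0$, so your claimed inequality cannot hold. The correct quantity, as in the paper's Lemma~\ref{lem.scaling.vertex}, is $\max_{\aa\in S,\vv\in\cC}\ip{\nabla f(\xx)}{\aa-\vv}\ge (f(\xx)-f^*)/\v(X^*,\xx)$.

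The downstream chain then also needs adjusting: the selected direction satisfies $-\ip{\nabla f(\xx^{(t)})}{\dd^{(t)}}\ge\tfrac12\ip{\nabla f(\xx^{(t)})}{\aa^{(t)}-\vv^{(t)}}$ (for AFW because the FW and away gaps sum exactly to $\ip{\nabla f}{\aa-\vv}$; for BPFW because the FW and blended-pairwise gaps sum to at least $\ip{\nabla f}{\aa-\vv}$ using $\ip{\nabla f(\xx)}{\xx-\zz}\ge 0$), and it is this quantity, squared inside~\eqref{eq.eta.loose}, that produces the $8L$ (a factor $4$ from squaring the $\tfrac12$ and a factor $2$ from $2L$). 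With $\zz$ replaced by $\vv$ throughout, your proof is the paper's proof; everything else, including the identification of the $\eta_t=\eta_{t,\max}<1$ drop-step case and the definition of $t_1$ via $|S(\llambda^{(t_0)})|$, matches.
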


Once again, the rates in Theorem~\ref{thm.variants} interpolate between the iconic rate $\cO(t^{-1})$ when $\theta = 0$, to the linear rate~\eqref{eq.linear.fwa} when $\theta = 1/2$.
The proof of Theorem~\ref{thm.variants} relies on the following scaling lemma, resembling Lemma~\ref{lem.scaling.radial}.

\begin{lemma}\label{lem.scaling.vertex} Let $\cC \subseteq \R^n$ be a polytope, let $f\colon \cC \to \R$ be convex and differentiable in an open set containing $\cC$. Then, for all $\xx\in\cC\setminus X^*$ and all $S\in \SS(\xx)$, it holds that
    \begin{equation}\label{eq.scaling.vertex}
        \max_{\aa\in S, \vv\in \cC} \ip{\nabla f(\xx)}{\aa-\vv} \geq \frac{f(\xx) - f^*}{\v(X^*,\xx)}.
    \end{equation}
    In particular, if $(\cC,f)$ satisfies a $(\mu,\theta)$-error bound relative to $\v$ for some $\mu > 0$ and $\theta\in [0,1/2]$, then for all $\xx\in \cC$
    \begin{equation}\label{eq.decrease.vertex}
        \max_{\aa\in S,\vv\in \cC} \ip{\nabla f(\xx)}{\aa-\vv} \geq \mu^\theta(f(\xx)-f^*)^{1-\theta}.
    \end{equation}
\end{lemma}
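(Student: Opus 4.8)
The plan is to mimic the proof of Lemma~\ref{lem.scaling.radial}, replacing the radial gauge by the vertex gauge. Fix $\xx\in\cC\setminus X^*$ and $S\in\SS(\xx)$, and choose $\xx^*\in X^*$ with $\v(\xx^*,\xx)=\v(X^*,\xx)$; note $\v(X^*,\xx)>0$ since $\xx\notin X^*$. Because $\v(\xx^*,\xx)$ is a \emph{maximum} over all support sets in $\SS(\xx)$, the inner minimum in Definition~\ref{def.vertex_distance} corresponding to our particular $S$ is at most $\v(\xx^*,\xx)=\v(X^*,\xx)$. Since the relevant feasible set is compact, that minimum is attained, so there exist $\ww\in\cC$, $\uu\in\conv(S)$, and a scalar $\gamma$ with $0<\gamma\le\v(X^*,\xx)$ such that $\xx^*-\xx=\gamma(\ww-\uu)$.

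Next I would invoke convexity of $f$, exactly as in Lemma~\ref{lem.scaling.radial}:
\[
f^* = f(\xx^*) \ge f(\xx) + \ip{\nabla f(\xx)}{\xx^*-\xx} = f(\xx) + \gamma\,\ip{\nabla f(\xx)}{\ww-\uu},
\]
which rearranges to $\ip{\nabla f(\xx)}{\uu-\ww}\ge (f(\xx)-f^*)/\gamma \ge (f(\xx)-f^*)/\v(X^*,\xx)$, using $f(\xx)-f^*>0$ and $\gamma\le\v(X^*,\xx)$. To conclude \eqref{eq.scaling.vertex}, I would pass from the point $\uu\in\conv(S)$ to a vertex: writing $\uu$ as a convex combination of the elements of $S$ and using linearity of $\yy\mapsto\ip{\nabla f(\xx)}{\yy-\ww}$ gives $\ip{\nabla f(\xx)}{\uu-\ww}\le\max_{\aa\in S}\ip{\nabla f(\xx)}{\aa-\ww}\le\max_{\aa\in S,\,\vv\in\cC}\ip{\nabla f(\xx)}{\aa-\vv}$, which chains with the previous inequality.

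For the ``in particular'' statement \eqref{eq.decrease.vertex}: if $\xx\in X^*$ then the right-hand side is $0$ while the left-hand side is nonnegative (take $\aa=\vv$ to be any vertex in $S$), so it holds trivially; otherwise $\xx\in\cC\setminus X^*$ and I would combine \eqref{eq.scaling.vertex} with the $(\mu,\theta)$-error bound $\v(X^*,\xx)\le((f(\xx)-f^*)/\mu)^\theta$, via the same one-line manipulation used at the end of the proof of Lemma~\ref{lem.scaling.radial}, to bound the maximum below by $\mu^\theta(f(\xx)-f^*)^{1-\theta}$.

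The only step that needs care is unwinding the nested $\max_{S\in\SS(\xx)}\min$ in Definition~\ref{def.vertex_distance}: the point is that fixing one particular support set $S$ can only \emph{decrease} the inner minimum, so the witness $(\gamma,\ww,\uu)$ produced for that $S$ automatically satisfies $\gamma\le\v(X^*,\xx)$; this is exactly what lets a single vertex distance control the gradient gap uniformly over all $S\in\SS(\xx)$, which matters because the support set maintained by the algorithm at iteration $t$ is not under our control. Establishing that the inner minimum is attained (so a genuine witness exists) follows from compactness of $\cC$ and $\conv(S)$, and everything else is verbatim from the proof of Lemma~\ref{lem.scaling.radial}.
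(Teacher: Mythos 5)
Your proposal is correct and follows essentially the same route as the paper's proof: pick a minimizing $\xx^*$, extract a witness $\xx^*-\xx=\gamma(\ww-\uu)$ with $\uu\in\conv(S)$ and $0<\gamma\le\v(X^*,\xx)$ from the definition of $\v$, apply convexity, and pass from $\uu$ to a vertex of $S$ by linearity. The only difference is that you spell out two points the paper leaves implicit (attainment of the inner minimum and the fact that fixing $S$ only decreases it), which is a harmless and arguably welcome elaboration.
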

\begin{proof}{Proof.} 
Suppose $\xx\in\cC\setminus X^*$ and $S\in \SS(\xx)$.
Let $\xx^*\in X^*$ be such that $\v(\xx^*,\xx) = \v(X^*,\xx)$. The definition of $\v$ implies that there exist $\ww\in\cC, \uu \in \conv(S)$ such that $\xx^*-\xx = \gamma(\ww-\uu)$ for some $0 < \gamma \le \v(\xx^*,\xx) = \v(X^*,\xx)$. Thus,
    \begin{align*}
        \max_{\aa\in S, \vv\in \cC} \ip{\nabla f(\xx)}{\aa-\vv} \ge \ip{\nabla f(\xx)}{\uu-\ww} = \frac{\ip{\nabla f(\xx)}{\xx-\xx^*}}{\gamma}  \ge  \frac{f(\xx) - f^*}{\v(X^*,\xx)},
    \end{align*}
    where the last inequality follows from convexity of $f$ and $0<\gamma \le \v(X^*,\xx)$. Thus~\eqref{eq.scaling.vertex} follows.  To show~\eqref{eq.decrease.vertex}, suppose $\xx \in \cC$.  If $\xx\in X^*$ then~\eqref{eq.decrease.vertex}  trivially holds.  Otherwise, when $\xx \in \cC\setminus X^*$,    
observe that~\eqref{eq.scaling.vertex} and the $(\mu,\theta)$-error bound relative to $\v$ imply that
\[
        \max_{\aa\in S, \vv\in \cC} \ip{\nabla f(\xx)}{\aa-\vv}  \ge  \frac{f(\xx) - f^*}{\v(X^*,\xx)}                                =\frac{(f(\xx) - f^*)^{1-\theta}(f(\xx) - f^*)^\theta}{\v(X^*,\xx)}  \ge \mu^\theta(f(\xx) - f^*)^{1-\theta}.
\]

\end{proof}

\begin{proof}{Proof of Theorem~\ref{thm.variants}.}
 The choice of $\dd^{(t)}$ implies that for $t\in \N$
    \begin{equation}\label{eq.angle.1}
        \ip{\nabla f(\xx^{(t)})}{ \dd^{(t)}} \le  \frac{\ip{\nabla f(\xx^{(t)})}{\vv^{(t)}-\aa^{(t)}}}{2} < 0
    \end{equation}
    and
    \begin{equation}\label{eq.angle.2}
        \ip{\nabla f(\xx^{(t)})}{\dd^{(t)}} \le \ip{\nabla f(\xx^{(t)})}{\vv^{(t)}-\xx^{(t)}} 
        \le\ip{\nabla f(\xx^{(t)})}{\xx^{*}-\xx^{(t)}} 
        \le f^*-f(\xx).
    \end{equation}
    We next proceed by considering three possible cases:
    \begin{enumerate}
        \item $\eta_t < \eta_{t,\max}$. In this case,~\eqref{eq.eta.loose},~\eqref{eq.angle.1}, and~\eqref{eq.decrease.vertex} imply that
              \begin{align*}
                  f(\xx^{(t+1)}) & \leq f(\xx^{(t)}) - \frac{\langle \nabla f(\xx^{(t)}), \dd^{(t)}\rangle^2}{2L} \\&
                  \le f(\xx^{(t)}) - \frac{\ip{\nabla f(\xx^{(t)})}{\vv^{(t)}-\aa^{(t)}}^2}{8L} \notag            \\ &\le f(\xx^{(t)}) - \frac{\mu^{2\theta}(f(\xx^{(t)}) - f^*)^{2(1-\theta)}}{8L}.
              \end{align*}
        \item $\eta_t = \eta_{t,\max}\geq 1$.  In this case,~\eqref{eq.eta.tight} and~\eqref{eq.angle.2} imply that
\[
                  f(\xx^{(t+1)}) \le f(\xx^{(t)}) + \frac{\ip{\nabla f(\xx^{(t)})}{\dd^{(t)}}}{2} \le
                  f(\xx^{(t)})-\frac{f(\xx^{(t)})-f^*}{2}.
\]
           Thus, if either of Case 1 or Case 2 occurs, we get
              \begin{equation}\label{eq.decrease.fwa}
                  f(\xx^{(t+1)}) - f^*\leq (f(\xx^{(t)})-f^*)\left(1- \min\left\{ \frac{\mu^{2\theta}(f(\xx^{(t)}) - f^*)^{1-2\theta}}{8L},\frac{1}{2} \right\}\right).\end{equation}
        \item $\eta_t = \eta_{t,\max} < 1$. In this case, the choice of step-size $\eta_t$ implies that
\[
                  f(\xx^{(t+1)}) - f^*\leq f(\xx^{(t)}) - f^*.
\]
    \end{enumerate}
    Whenever Case 3 occurs we have $|S(\llambda^{(t+1)})| \le |S(\llambda^{(t)})|-1$ and whenever Case 1 or Case 2 occurs we have $|S(\llambda^{(t+1)})| \le |S(\llambda^{(t)})|+1$. Since $|S(\llambda^{(t)})| \geq 1$ for all $t\in\N$, it follows that for any consecutive sequence of iterations $\{t_0,t_0+1,\dots,t\}$  with $t > |S(\llambda^{(t_0)})|+t_0-1$ Case 1 or Case 2 must occur at least $\lceil
        \frac{t-|S(\llambda^{(t_0)}|-t_0+1}{2} \rceil$ times.  Furthermore, the sequence $f(\xx^{(t)}) - f^*$ is monotonically nonincreasing as this is guaranteed in any of the three cases.  The proof is completed as follows.
    When $\theta = 1/2$, Inequality~\eqref{eq.decrease.fwa} and the monotonicity of $f(\xx^{(t)}) - f^*$ imply that for  $t\in\N$ whenever Case 1 or Case 2 occurs, it holds that
    \[
        f(\xx^{(t+1)}) - f^*\leq (f(\xx^{(t)})-f^*)\left(1- \min\left\{
        \frac{\mu}{8L},\frac{1}{2}
        \right\}\right).
    \]
 Thus~\eqref{eq.linear.fwa} follows.
    When $\theta \in [0,1/2)$, Inequality~\eqref{eq.decrease.fwa} and the monotonicity of
    $f(\xx^{(t)}) - f^*$ imply that whenever Case 1 or Case 2 occurs and $t<t_0$, it holds that
    \[
        f(\xx^{(t+1)}) - f^*\leq \frac{1}{2}(f(\xx^{(t)}) - f^*).
    \]
    Therefore,~\eqref{eq.linearregime.fwa} holds for $t=0,1,\dots,t_0$.
    Finally, when $\theta \in [0,1/2)$, Inequality~\eqref{eq.decrease.fwa} and the monotonicity of
    $f(\xx^{(t)}) - f^*$ imply that
    whenever Case 1 or Case 2 occurs and $t\ge t_0$, it holds that that
    \[
        f(\xx^{(t+1)}) - f^*\leq(f(\xx^{(t)})-f^*)
        \left(1-\frac{\mu^{2\theta}}{8L}(f(\xx^{(t)}) - f^*)^{1-2\theta}\right).
    \]
 Thus~\eqref{eq.constantregime.fwa} holds for $t_0 < t \le 
 t_1 = |S(\llambda^{(t_0)})|+t_0-1$~and Lemma~\ref{lemma:borwein} implies~\eqref{eq.sublinearregime.fwa} for $t > t_1$.
\end{proof}

We next discuss common sufficient conditions for the the assumptions of Theorem~\ref{thm.variants} to hold.  First, as noted in Section~\ref{sec.prelims}, the pair $(\cC,f)$ has extended $L$-curvature for some $L > 0$ whenever $f$ is smooth with respect to a norm in $\cC$.
Proposition~\ref{prop:aff_err.vertex} below is an analogue of Proposition~\ref{prop:int_aff_err}.  It shows that $(\cC,f)$ satisfies an error bound relative to $\v$ provided $\cC$ is a polytope and $(\cC,f)$ satisfies a  Hölderian error bound with respect to some norm in $\cC$.

The proof of Proposition~\ref{prop:aff_err.vertex} relies on the following  definition and technical lemma.  

\begin{definition}[Minimal face]\label{def.minimal_face}
    Let $\cC\subseteq\R^n$ be a nonempty polytope.  For $\xx\in\cC$ let $F(\xx) \in\faces(\cC)$ denote the \emph{minimal face} of $\cC$ that contains $\xx$, that is, $F(\xx) \in \faces(\cC)$ satisfies $\xx\in F(\xx)$ and $ F(\xx)\subseteq G$ for all $G\in\faces(\cC)$ such that $\xx\in G$.
Similarly, for $\emptyset \ne X\subseteq\cC$ let $F(X)\in \faces(\cC)$ denote the \emph{minimal face} of $\cC$ that contains $X$.

\end{definition}

\begin{lemma}\label{lemma.vertex.facial}
Let $\R^n$ be endowed with a norm $\|\cdot\|$ and let $\cC \subseteq \R^n$ be a nonempty polytope.
 Then for all $\xx,\yy\in\cC$
\begin{equation}\label{eq.vertex.facial}
                 \|\yy-\xx\|\ge \v(\yy,\xx) \cdot \Phi(F(\yy),\cC).
\end{equation}

\end{lemma}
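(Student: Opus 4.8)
The plan is to unwind the definition of the vertex distance $\v(\yy,\xx)$ and reduce everything to a single inequality about points on a line segment, which will then follow from the definition of the inner facial distance $\Phi(F(\yy),\cC)$. First I would fix $\xx,\yy\in\cC$. If $\yy=\xx$ both sides are zero and there is nothing to prove, so assume $\yy\ne\xx$. By Definition~\ref{def.vertex_distance}, there is a support set $S\in\SS(\xx)$ attaining the outer maximum, and for that $S$ there exist $\vv\in\cC$ and $\uu\in\conv(S)$ with $\yy-\xx = \gamma(\vv-\uu)$, where $\gamma = \v(\yy,\xx) > 0$; equivalently $\vv-\uu = \tfrac1\gamma(\yy-\xx)$ so that $\|\vv-\uu\| = \tfrac1\gamma\|\yy-\xx\|$. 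Thus it suffices to show $\|\vv-\uu\|\ge \Phi(F(\yy),\cC)$, i.e. that this particular ``direction-realizing'' segment has length at least the inner facial distance from the minimal face containing $\yy$.

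The key geometric observation is that $\uu$ lies in a face $G$ of $\cC$ with $G\subseteq F(\yy)$, and that $\vv$ lies ``on the other side'' of $G$ in an appropriate sense. Concretely: $\uu\in\conv(S)$ where $S=S(\llambda)$ for some vertex representation $\xx=\cV\llambda$; let $G$ be the minimal face of $\cC$ containing $\uu$ (so $G\subseteq\conv(S)$, since every vertex in $S$ is a vertex of $\cC$ whose convex hull need not be a face, but the minimal face of $\uu$ is contained in that hull's minimal face — I would be careful here and instead take $G := F(\uu)$). Since $\yy = \xx + \gamma(\vv-\uu)$ and $\xx\in\conv(S)$ as well, I want to argue that $G$, viewed as a face of $\cC$, satisfies $G\in\faces(F(\yy))$ and $G\subsetneq\cC$. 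The inclusion $G\subseteq F(\yy)$ should follow because $\uu$ is obtained from $\yy$ by moving along the segment toward $\xx$ and then possibly further, all within $\conv(S)\subseteq F(\xx)$; the cleanest route is: $\uu$ and $\vv$ together with $\yy$ are collinear, $\vv\in\cC$, and the minimal face of $\yy$ must contain the ``inward'' point $\uu$ whenever $\yy$ is a proper convex combination of $\uu$ and $\vv$ — this is the standard fact that a face containing a relative-interior point of a segment contains the whole segment. Once $G\in\faces(F(\yy))$ and $G\subsetneq\cC$ are established, and noting $\vv\in\cC$ is a vertex (the linear oracle returns vertices) or at least $\vv\notin G$ (else $\yy\in G$ contradicts minimality of $F(\yy)$ unless $G=F(\yy)$, handled separately), I apply the definition of $\Phi$: $\|\vv-\uu\|\ge\dist\big(G,\conv(\vertices(\cC)\setminus G)\big)\ge\Phi(F(\yy),\cC)$, using that $\uu\in G$ and $\vv\in\conv(\vertices(\cC)\setminus G)$.

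Assembling: $\|\yy-\xx\| = \gamma\|\vv-\uu\|\ge \v(\yy,\xx)\cdot\Phi(F(\yy),\cC)$, which is \eqref{eq.vertex.facial}. The main obstacle I anticipate is the face-membership bookkeeping in the middle step: showing rigorously that the minimal face $G$ of $\uu$ is indeed a face of $F(\yy)$ (this needs $\uu\in F(\yy)$, which needs $\yy$ to lie in the relative interior of, or at least strictly between the endpoints of, the segment $[\uu,\vv]$ — one must rule out the degenerate case $\yy=\uu$, which forces $\yy=\xx$) and that $\vv\in\conv(\vertices(\cC)\setminus G)$ rather than merely $\vv\in\cC$. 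Handling the boundary case where $\gamma$ is such that $\vv\in G$ (so the segment degenerates inside a single face) will require observing that then $\yy\in G\subseteq F(\yy)$ forces $G=F(\yy)$, and the inequality $\|\vv-\uu\|\ge\Phi(F(\yy),\cC)$ still holds because one can instead use $G'\subsetneq\cC$ a proper face of $F(\yy)$ in the minimum defining $\Phi$, or note that in this case a shorter argument via $\Phi(\cC,\cC)>0$ suffices; I would make sure the definition of $\Phi$ as a \emph{minimum over $G\in\faces(F(\yy))$ with $G\subsetneq\cC$} is exactly what licenses picking the most convenient such $G$.
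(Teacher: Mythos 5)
There is a genuine gap, and it sits exactly where you flagged your ``main obstacle.'' Your plan hinges on the claim that $\yy$ lies on (the relative interior of) the segment $[\uu,\vv]$, so that $\uu\in F(\yy)$ and $G:=F(\uu)$ is a face of $F(\yy)$. But the definition of $\v$ is purely directional: it only asserts $\yy-\xx=\gamma(\vv-\uu)$, i.e.\ that $\vv-\uu$ is a positive multiple of $\yy-\xx$; it does not place $\yy$ anywhere near the segment $[\uu,\vv]$. Concretely, take $\cC=[0,1]^2$, $\xx=(1/2,1/2)$ with $S=\vertices(\cC)$, and $\yy=(1/2,1)$. Then $\v(\yy,\xx)=1/2$ is attained by any pair $\vv=(a,1)$, $\uu=(a,0)$, and for every such pair $\uu\notin F(\yy)$ (the top edge), so $F(\uu)\not\in\faces(F(\yy))$. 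In fact the natural containment goes the other way: $\uu\in\conv(S)$ is facially tied to $\xx$, not to $\yy$. Since the face $G$ fed into $\Phi(F(\yy),\cC)$ must by definition be a face of $F(\yy)$, it has to be extracted from $\vv$, not from $\uu$ --- and your $\vv$ ranges over all of $\cC$, so $F(\vv)$ need not be a face of $F(\yy)$ either.

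The paper's proof supplies the two devices your outline is missing. First, it replaces $\v(\yy,\xx)$ by a modified quantity $\tilde\v(\yy,\xx)\ge\v(\yy,\xx)$ in which $\vv$ is constrained to lie in $F(\yy)$ (this is feasible: $\vv=\yy$, $\uu=\xx$, $\gamma=1$ works). This guarantees $F(\vv)\in\faces(F(\yy))$, which is the face used in $\Phi$. Second, choosing the attaining support set $S$ minimal with respect to inclusion, it shows by a perturbation argument ($\uu':=(1+\epsilon)\uu-\epsilon\ss$, $\vv':=(1+\epsilon)\vv-\epsilon\ss$ would yield a strictly smaller $\gamma$) that $S\cap\vertices(F(\vv))=\emptyset$, hence $\uu\in\conv(\vertices(\cC)\setminus F(\vv))$. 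Only then does $\|\vv-\uu\|\ge\dist(F(\vv),\conv(\vertices(\cC)\setminus F(\vv)))\ge\Phi(F(\yy),\cC)$ follow. Neither step is cosmetic: without the restriction $\vv\in F(\yy)$ the face is wrong, and without the minimality argument there is no reason $\uu$ avoids $\conv(\vertices(F(\vv)))$. Your proposal as written cannot be repaired by bookkeeping alone; it needs both ideas.
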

For ease of exposition, we defer the proof of Lemma~\ref{lemma.vertex.facial} to Section~\ref{sec.facial_geometry}.

\begin{proposition}[Vertex distance error bound]\label{prop:aff_err.vertex}
Let $\R^n$ be endowed with a norm $\|\cdot\|$, let $\cC \subseteq \R^n$ be a polytope, and let $f\colon\cC\to\R$ be convex.
    Suppose that $(\cC,f)$ satisfies a $(\mu, \theta)$-Hölderian error bound with respect to $\|\cdot\|$ for some $\mu>0$ and $\theta \in(0,1/2]$. Then
    $(\cC, f)$ satisfies a $(\tilde \mu,\theta)$ error bound relative to $\v$ for 
    \[
    \tilde \mu = \mu \cdot \Phi(F(X^*),\cC)^{1/\theta}.
    \]
\end{proposition}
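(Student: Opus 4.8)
The plan is to combine the given Hölderian error bound with Lemma~\ref{lemma.vertex.facial}, after first recording a monotonicity property of the inner facial distance $\Phi(\cdot,\cC)$ under face inclusion. The point is that Lemma~\ref{lemma.vertex.facial} bounds $\|\yy-\xx\|$ below by $\v(\yy,\xx)\cdot\Phi(F(\yy),\cC)$, in which the factor $\Phi(F(\yy),\cC)$ depends on $\yy$; to get a clean error bound relative to $\v$ we must replace this $\yy$-dependent quantity by the single number $\Phi(F(X^*),\cC)$.

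I would begin with two elementary facial observations. First, for any $\xx^*\in X^*$ the minimal face $F(\xx^*)$ satisfies $F(\xx^*)\subseteq F(X^*)$: since $\xx^*\in X^*\subseteq F(X^*)$, minimality of $F(\xx^*)$ forces $F(\xx^*)\subseteq F(X^*)$. Second, if $F'\subseteq F$ are faces of $\cC$, then $\Phi(F',\cC)\ge \Phi(F,\cC)$; this is immediate from Definition~\ref{def.facial_distance}, because $\faces(F')\subseteq \faces(F)$, so the minimum defining $\Phi(F',\cC)$ is over a subset of the index set defining $\Phi(F,\cC)$. Combining the two gives $\Phi(F(\xx^*),\cC)\ge \Phi(F(X^*),\cC)$ for every $\xx^*\in X^*$, and the remark after Definition~\ref{def.facial_distance} ensures $\Phi(F(X^*),\cC)>0$, so $\tilde\mu$ is well defined.

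Next, fix $\xx\in\cC$. If $\xx\in X^*$ then $f(\xx)-f^*=0$ and $\v(X^*,\xx)=0$, so the claimed inequality holds trivially (using $\theta>0$); assume now $\xx\notin X^*$. Pick $\xx^*\in X^*$ attaining $\|\xx^*-\xx\|=\min_{\yy^*\in X^*}\|\yy^*-\xx\|$, which exists since $X^*$ is compact. Applying Lemma~\ref{lemma.vertex.facial} with $\yy=\xx^*$, then the observation above together with $\v(\xx^*,\xx)\ge \v(X^*,\xx)$, yields
\[
\min_{\yy^*\in X^*}\|\yy^*-\xx\| \;=\; \|\xx^*-\xx\| \;\ge\; \v(\xx^*,\xx)\cdot \Phi(F(\xx^*),\cC) \;\ge\; \v(X^*,\xx)\cdot \Phi(F(X^*),\cC).
\]
Finally, invoking the $(\mu,\theta)$-Hölderian error bound, namely $\left(\tfrac{f(\xx)-f^*}{\mu}\right)^\theta\ge \min_{\yy^*\in X^*}\|\yy^*-\xx\|$, and dividing the resulting chain of inequalities by $\Phi(F(X^*),\cC)=\bigl(\Phi(F(X^*),\cC)^{1/\theta}\bigr)^{\theta}$ gives
\[
\left(\frac{f(\xx)-f^*}{\mu\cdot \Phi(F(X^*),\cC)^{1/\theta}}\right)^{\theta} \;\ge\; \v(X^*,\xx),
\]
which is precisely the $(\tilde\mu,\theta)$-error bound relative to $\v$ with $\tilde\mu=\mu\cdot\Phi(F(X^*),\cC)^{1/\theta}$.

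The only mildly non-routine ingredient is the monotonicity of $\Phi(\cdot,\cC)$ under face inclusion and the attendant step of replacing the point-dependent factor $\Phi(F(\xx^*),\cC)$ by the uniform quantity $\Phi(F(X^*),\cC)$; once Lemma~\ref{lemma.vertex.facial} is granted, the rest is a direct substitution, so I do not anticipate a serious obstacle here (the real work is deferred to the proof of Lemma~\ref{lemma.vertex.facial} in Section~\ref{sec.facial_geometry}).
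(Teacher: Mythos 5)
Your proof is correct and follows essentially the same route as the paper: apply Lemma~\ref{lemma.vertex.facial} with $\yy=\xx^*$ and then invoke the H\"olderian error bound. The only difference is that you explicitly justify the passage from $\Phi(F(\xx^*),\cC)$ to $\Phi(F(X^*),\cC)$ via monotonicity of $\Phi(\cdot,\cC)$ under face inclusion, a small step the paper's one-line invocation of the lemma leaves implicit.
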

\begin{proof}{Proof.}
This proof is similar to that of Proposition~\ref{prop:int_aff_err}. 
Lemma~\ref{lemma.vertex.facial} implies that 
for all $\xx\in \cC$
\[
\min_{\xx^*\in X^*}\|\xx^*-\xx\| \ge \v(X^*,\xx) \cdot \Phi(F(X^*),\cC).
\]
Since $f$ satisfies a $(\mu, \theta)$-Hölderian error bound over $\cC$ with respect to the norm $\|\cdot\|$, it follows that for all $\xx\in \cC$
\[
\left(\frac{f(\xx)-f^*}{\mu}\right)^\theta \geq \min_{\xx^*\in X^*} \|\xx^*-\xx\| \ge \v(X^*,\xx) \cdot  \Phi(F(X^*),\cC),
\]
and thus 
\[
\left(\frac{f(\xx)-f^*}{\tilde \mu}\right)^\theta \geq \v(X^*,\xx).
\]
\end{proof}

Note that in the seminal work of \cite{lacoste2015global}, the convergence rate depended on the global inner facial distance instead. The global facial distance generally depends on the dimension of the polytope $\cC$. This has already been demonstrated, for example, for the $\ell_1$-ball \cite{wirth2023approximate} and the hypercube \cite{pena2019polytope}. In Section~\ref{sec.lower_bound_fd}, we demonstrate that the same does not hold true for the local facial distance. Instead, we will show that the local facial distance depends only on the dimension of the optimal face $F(X^*)$. Our theory is the first result that gives this without additional assumptions such as strict complementarity as in, for example, \cite{garber2020revisiting}.

\subsection{In-face Frank-Wolfe algorithm}\label{sec.in_face}

\begin{algorithm}[t]
    \caption{In-face Frank-Wolfe algorithm (IFW)}\label{alg:in_face_fw}
\begin{algorithmic}
    \State {{\bf Input: }{$\xx^{(0)}\in\cC$}
    }
    \State { {\bf Output: }{$\xx^{(t)}\in\cC$  for $t\in\N$.}}
    \hrulealg
    \For{$t\in\N$}
    \State{compute $\dd^{(t)}_{\text{FW}}, \dd^{(t)}_{\text{inAFW}}, \; \dd^{(t)}_{\text{inBPFW}}$ as in~\eqref{eq.inface.directions} for $\xx = \xx^{(t)}$} 
    \State
    {select  $\dd^{(t)}\in \{\dd^{(t)}_{\text{FW}}, \dd^{(t)}_{\text{inAFW}},\dd^{(t)}_{\text{inBPFW}}\}$  that minimizes $\ip{\nabla f(\xx^{(t)})}{\dd^{(t)}}$}
    \State
    {compute $\eta_{t,\max}$ as in~\eqref{eq.inface.max.step-size}
    for $\xx = \xx^{(t)}$ and $\dd = \dd^{(t)}$}
    \State{$\xx^{(t+1)} \gets \xx^{(t)} + \eta_t \dd^{(t)}$ for some $\eta_t \in [0,\eta_{t,\max}]$}
    \EndFor
\end{algorithmic}
\end{algorithm}

We next describe another variant of the Frank-Wolfe algorithm based on \emph{in-face away, in-face pairwise}, and \emph{in-face blended pairwise} directions when a suitable \emph{in-face} oracle is available.  Variants of the Frank-Wolfe algorithm that incorporate in-face directions have been considered and analyzed by~\cite{freund2017extended} for low-rank matrix completion and by~\cite{garber2016linear} for minimization over simplex-like polytopes.  An  advantage of in-face variants is that they do not require an explicit vertex representation of the iterates.

Throughout this subsection we assume that $\cC \subseteq \R^n$ is a polytope  equipped with the following kind of \emph{in-face linear minimization oracle:} for $\xx\in \cC$ and $\gg \in \R^n$ there is an oracle that computes
\begin{equation}\label{eq.inface.oracle}
    \argmin_{\yy\in F(\xx)} \ip{\gg}{\yy},
\end{equation}
where $F(\xx)\in \faces(\cC)$ denotes the minimal face of $\cC$ that contains $\xx$ as detailed in Definition~\ref{def.minimal_face}.  
We also assume that $\cC$ is equipped with the following kind of \emph{maximum step-size oracle:} for $\xx\in\cC \subseteq \R^n$ and $\dd \in \cC-\cC$ there is an oracle that computes
\begin{equation}\label{eq.inface.max.step-size}
    \max\{\eta \ge 0 \mid \xx+ \eta \dd \in \cC\}.
\end{equation}
As it is discussed in~\cite{garber2016linear},  the above 
in-face linear minimization and maximum step-size
oracles can be constructed when a linear minimization oracle for $\cC$ is available and $\cC$ has a polyhedral description of the form
\[
    \cC = \{\xx\in\R^n\mid  A\xx = \bb, D\xx \ge \ee\},
\]
provided there is an additional available oracle for the mapping $\xx \mapsto D\xx$.  In particular, this is the case when $\cC$ has a standard form description
\[\cC = \{\xx\in\R^n \mid A\xx = \bb, \xx \geq \0\}.\]

The balls defined by the $\ell_1$- and $\ell_{\infty}$-norms have both in-face linear minimization and maximum step-size oracles.  In addition, as it is detailed in~\cite[Section 4]{garber2016linear}, the following are some examples of polytopes with a standard form description that have both in-face linear minimization and maximum step-size oracles: the standard simplex, the flow polytope, the perfect matchings polytope, and the marginal polytope.

For $\xx\in\cC$ define the \emph{in-face away} vertex $\aa \in S$ and \emph{in-face local minimizer} vertex $\zz\in S$, and \emph{global minimizer} vertex as follows
\[
    \aa  = \argmax_{\yy\in F(\xx)} \ip{\nabla f(\xx)}{\yy}, \qquad  \zz = \argmin_{\yy\in F(\xx)} \ip{\nabla f(\xx)}{\yy}, \qquad
    \vv  = \argmin_{\yy\in \cC} \ip{\nabla f(\xx)}{\yy}.
\]
The objects defined above in turn yield the following \emph{Frank-Wolfe}, \emph{in-face away}, and \emph{in-face blended pairwise directions} for any $\xx\in \cC$:
\begin{equation}\label{eq.inface.directions}
    \dd_{\text{FW}} = \vv-\xx,\qquad \dd_{\text{inAFW}} = \xx-\aa, \qquad \dd_{\text{inBPFW}} = \zz-\aa.
\end{equation}
We next establish affine-invariant convergence results for  Algorithm~\ref{alg:in_face_fw} analogous to Theorem~\ref{thm.vanilla} and Theorem~\ref{thm.variants}.  To do so, we rely on the following \emph{face distance}.

\begin{definition}[Face distance]\label{def.in_face_vertex_distance}
    Let $\cC\subseteq\R^n$ be a polytope. Define the \emph{face distance} $\f\colon\cC\times\cC \to [0,1]$ as follows.  For $\yy,\xx\in\cC$ let the face distance between $\yy$ and $\xx$ be
\[
        \f(\yy,\xx) = \min\{\gamma \geq 0 \mid \yy-\xx = \gamma (\vv-\uu)  \ \text{for some } \vv\in C \ \text{and} \ \uu\in F(\xx)\}.
\]
\end{definition}
Note that $\f(\yy, \xx)= 0$ if and only if $\yy = \xx$.
Observe that if $\cC\subseteq\R^n$ is a polytope, then for all $\yy,\xx\in \cC$, it holds that
$
    \f(\yy,\xx) \leq \v(\yy,\xx).
$ 
In particular, if $(\cC,f)$ satisfies the $(\mu,\theta)$-error bound relative to $\v$ then $(\cC,f)$ also satisfies the $(\mu,\theta)$-error bound relative to $\f$ and it would typically do so for a larger $\mu$.

Figure~\ref{fig:face} displays visualizations of the function $\f(\yy,\cdot)$ on $\cC$ for $\cC=[0,1]\times [0,1],\; \yy = (0,0)$ and also for $\cC=[0,2]\times [0,1],\; \yy = (0,0)$.
As in Figure~\ref{fig:radial} and Figure~\ref{fig:vertex}, Figure~\ref{fig:face}(b) is a stretched version of Figure~\ref{fig:face}(a) due of the affine invariance of $\f$.

\begin{figure}[ht]
\captionsetup[subfigure]{justification=centering}
 \hspace{-.6in}
\begin{tabular}{c c} 
\begin{subfigure}{.55\textwidth}
    \centering
        \includegraphics[width=.62\textwidth]{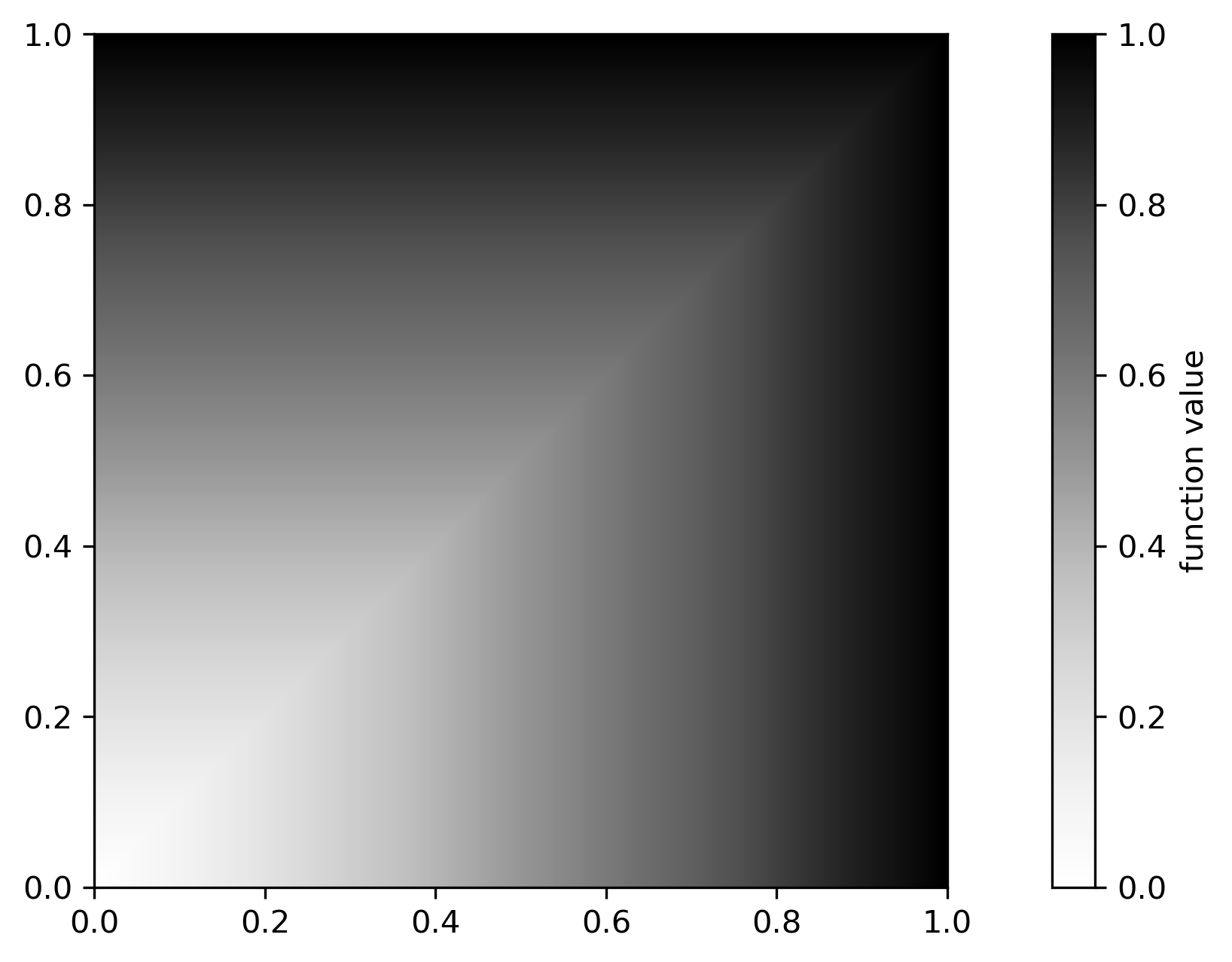}
        \caption{$\cC = [0,1]\times [0,1], \yy = (0,0)$.}
    \end{subfigure} & \hspace{-.5in}
    \begin{subfigure}{.575\textwidth}
    \centering
        \includegraphics[width=1\textwidth]{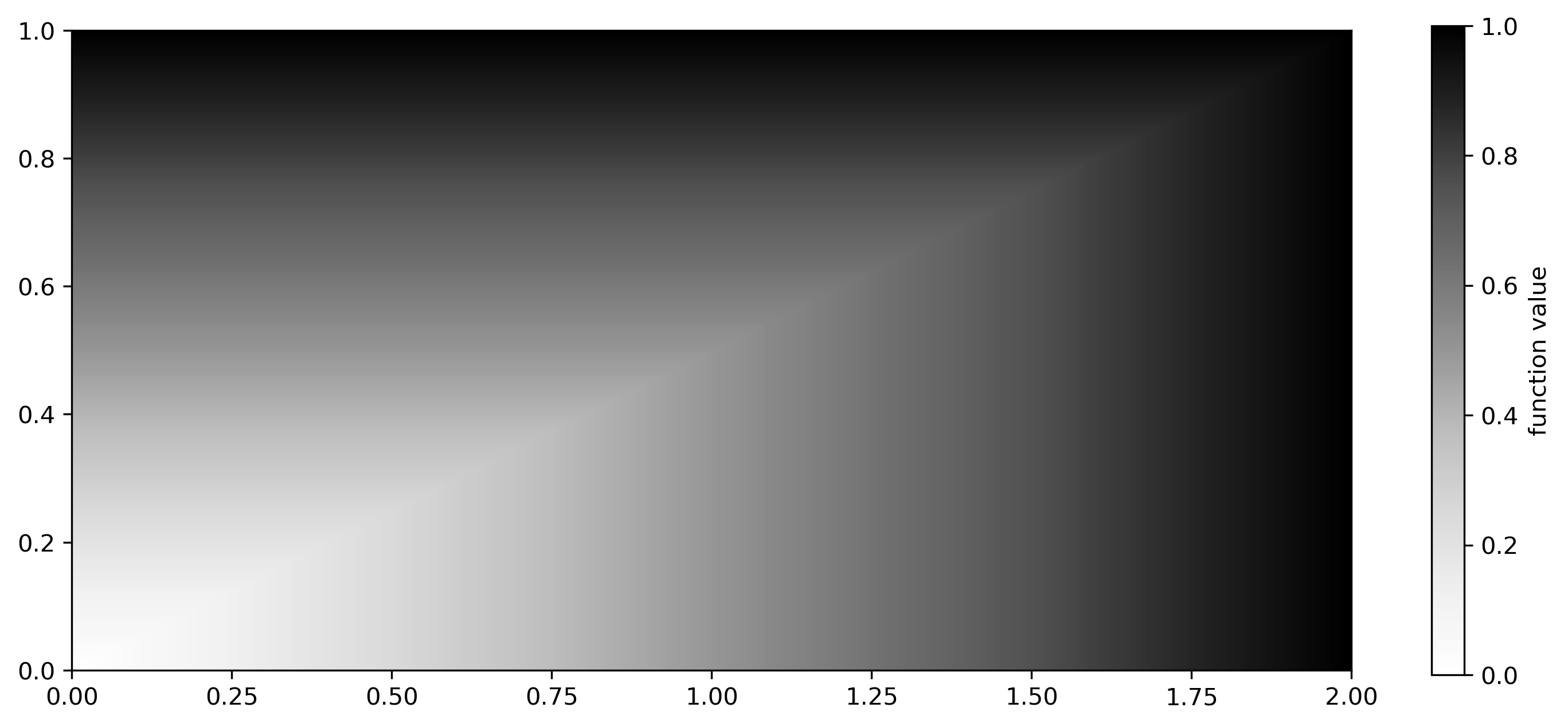}
        \caption{$\cC = [0,2]\times [0,1], \yy = (0,0)$.}
    \end{subfigure} 
\end{tabular}
\caption{
Visualization of the function
 $\f(\yy,\cdot)$ on $\cC$ for the face distance 
$\f$ of $\cC$.  The shade of gray indicates the function value. Points $\xx\in \cC$ with $\f(\yy,\xx) = 0$ are in white and points $\xx\in\cC$ with $\f(\yy,\xx)=1$ are in black.
}\label{fig:face}
\end{figure}

As we detail below, a suitable modification of
Theorem~\ref{thm.variants} yields the following convergence rate of Algorithm~\ref{alg:in_face_fw}.

\begin{theorem}[IFW]\label{thm.in_face_variants}
    Let $\cC\subseteq\R^n$ be a polytope  equipped with an in-face oracle~\eqref{eq.inface.oracle} and a maximum step-size oracle~\eqref{eq.inface.max.step-size}, let $f\colon \cC \to \R$ be convex and differentiable in an open set containing $\cC$, and suppose that $(\cC,f)$ has extended $L$-curvature for some $L>0$, and satisfies the $(\mu,\theta)$-error bound relative to $\f$ for some  $\mu > 0$ and $\theta\in[0,1/2]$.
Then the iterates of Algorithm~\ref{alg:in_face_fw} with line-search or short-step satisfy the following convergence rates.
    
\begin{itemize}
\item    
    When $\theta = 1/2$ the following linear convergence rate holds for all $t\in\N$:
    \begin{equation}\label{eq.in_face.linear.fwa}
        f(\xx^{(t)})-f^* \leq ( f(\xx^{(0)}) - f^* )\left(1-\min\left\{\frac{\mu}{8L}, \frac{1}{2}\right\}\right)^{\lceil t/\dim(\cC)\rceil}.
    \end{equation}
    \item
    When $\theta \in [0,1/2)$,     the following initial linear convergence rate holds for $t\in \{0,1,\dots,t_0\}$:
    \begin{equation}\label{eq.linearregime.fwa.inface}
        f(\xx^{(t)})-f^* \leq \frac{f(\xx^{(0)}) - f^*}{2^{\lceil t/\dim(\cC) \rceil}},
    \end{equation}
    where $t_0\in\N$ is the smallest $t$ such that
    $$\frac{\mu^{2\theta}(f(\xx^{(t)}) - f^*)^{1-2\theta}}{8L} \le \frac{1}{2}.$$
And for $t\in\N$ with $t > t_0$, the following sublinear convergence rate holds:
    \begin{equation}\label{eq.sublinearregime.fwa.inface}
         f(\xx^{(t)})-f^* \leq\left((f(\xx^{(t_0)}) -f^*)^{(2\theta-1)} + \frac{(1-2\theta)\mu^{2\theta}}{8L} \cdot \lceil \frac{
            t-t_0}{\dim(\cC)} \rceil\right)^{\frac{1}{2\theta-1}}
    \end{equation}
    \end{itemize}
\end{theorem}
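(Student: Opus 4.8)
The plan is to run the same three-ingredient template used to prove Theorem~\ref{thm.variants}, replacing the vertex distance $\v$ by the face distance $\f$ and replacing the bookkeeping on the active-set size $|S(\llambda^{(t)})|$ by a bookkeeping on $\dim F(\xx^{(t)})$.

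\emph{Scaling lemma.} First I would prove the face-distance analogue of Lemma~\ref{lem.scaling.radial} and Lemma~\ref{lem.scaling.vertex}: for all $\xx\in\cC\setminus X^*$,
\[
\max_{\aa\in F(\xx),\,\vv\in\cC}\ip{\nabla f(\xx)}{\aa-\vv}\ \ge\ \frac{f(\xx)-f^*}{\f(X^*,\xx)},
\]
so that the $(\mu,\theta)$-error bound relative to $\f$ promotes this to $\ \ge\ \mu^{\theta}(f(\xx)-f^*)^{1-\theta}$. The proof copies that of Lemma~\ref{lem.scaling.vertex}: choose $\xx^*\in X^*$ attaining $\f(X^*,\xx)$, use the definition of $\f$ to write $\xx^*-\xx=\gamma(\ww-\uu)$ with $\ww\in\cC$, $\uu\in F(\xx)$ and $0<\gamma\le\f(X^*,\xx)$, and conclude by convexity of $f$. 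The point to verify is that the left-hand side is exactly the quantity the in-face directions exploit: the in-face oracle~\eqref{eq.inface.oracle} returns $\aa\in F(\xx)$ and $\zz\in F(\xx)$ realizing the max and min of $\ip{\nabla f(\xx)}{\cdot}$ over $F(\xx)$, and since $\dd_{\text{FW}}+\dd_{\text{inAFW}}=\vv-\aa$ the selection rule of Algorithm~\ref{alg:in_face_fw} gives, exactly as in~\eqref{eq.angle.1}--\eqref{eq.angle.2},
\[
\ip{\nabla f(\xx^{(t)})}{\dd^{(t)}}\ \le\ \tfrac12\,\ip{\nabla f(\xx^{(t)})}{\vv^{(t)}-\aa^{(t)}}\ <\ 0
\qquad\text{and}\qquad
\ip{\nabla f(\xx^{(t)})}{\dd^{(t)}}\ \le\ f^*-f(\xx^{(t)}).
\]

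\emph{Per-step progress.} Then, as in the proof of Theorem~\ref{thm.variants}, I would split into three cases. If $\eta_t<\eta_{t,\max}$, extended $L$-curvature together with line-search/short-step yields~\eqref{eq.eta.loose}, and combining with the two displays above and the scaling lemma gives $f(\xx^{(t+1)})-f^*\le (f(\xx^{(t)})-f^*)\bigl(1-\tfrac{\mu^{2\theta}}{8L}(f(\xx^{(t)})-f^*)^{1-2\theta}\bigr)$. If $\eta_t=\eta_{t,\max}\ge1$, then~\eqref{eq.eta.tight} gives $f(\xx^{(t+1)})-f^*\le\tfrac12(f(\xx^{(t)})-f^*)$. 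In both of these ``progress'' cases we obtain
\[
f(\xx^{(t+1)})-f^*\ \le\ (f(\xx^{(t)})-f^*)\Bigl(1-\min\bigl\{\tfrac{\mu^{2\theta}}{8L}(f(\xx^{(t)})-f^*)^{1-2\theta},\ \tfrac12\bigr\}\Bigr),
\]
while if $\eta_t=\eta_{t,\max}<1$ (a drop step) we only get the monotonicity $f(\xx^{(t+1)})-f^*\le f(\xx^{(t)})-f^*$, which in fact holds in all three cases.

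\emph{Drop-step bookkeeping (the main obstacle).} This is where the argument genuinely departs from Theorem~\ref{thm.variants}, since IFW maintains no explicit vertex representation. The observations I would use are: (i) the Frank--Wolfe direction has $\eta_{t,\max}=1$ because $\vv^{(t)}$ is a vertex, so a drop step must use an in-face direction $\dd_{\text{inAFW}}$ or $\dd_{\text{inBPFW}}$, which moves inside $F(\xx^{(t)})$; (ii) reaching $\eta_{t,\max}$ therefore puts $\xx^{(t+1)}$ on the relative boundary of $F(\xx^{(t)})$, so $F(\xx^{(t+1)})\subsetneq F(\xx^{(t)})$ and $\dim F(\xx^{(t+1)})\le\dim F(\xx^{(t)})-1$; and (iii) $\dim F(\xx^{(t)})\in\{0,1,\dots,\dim(\cC)\}$ for all $t$, with $\dim F(\xx^{(t)})=0$ forcing a Frank--Wolfe (hence progress) step. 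Using $\dim F(\xx^{(t)})$ as a potential, I would argue that between two progress steps there are at most $\dim(\cC)-1$ drop steps, so that among the first $t$ iterations there are at least $\lceil t/\dim(\cC)\rceil$ progress steps. Combining this count with the monotonicity above then produces the three stated regimes: for $\theta=1/2$ each progress step contracts the gap by $1-\min\{\mu/(8L),1/2\}$, yielding~\eqref{eq.in_face.linear.fwa}; for $\theta\in[0,1/2)$ and $t\le t_0$ each progress step halves the gap, yielding~\eqref{eq.linearregime.fwa.inface}; and for $t>t_0$, applying Lemma~\ref{lemma:borwein} with $p=1-2\theta$, $\sigma=\mu^{2\theta}/(8L)$, $\beta_s=f(\xx^{(s)})-f^*$ along the progress steps yields~\eqref{eq.sublinearregime.fwa.inface}. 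I expect the delicate part to be step (iii): making precise, without a vertex representation at hand, that a drop step strictly lowers $\dim F$ and that no intervening step can raise it so much as to permit more than $\dim(\cC)-1$ consecutive drops, rather than any of the analytic estimates, which are routine adaptations of the proof of Theorem~\ref{thm.variants}.
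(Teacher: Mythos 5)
Your proposal is correct and follows essentially the same route as the paper: the same face-distance scaling lemma (the paper's Lemma~\ref{lem.in_face.scaling.face}), the same three-case split on $\eta_t$ versus $\eta_{t,\max}$, the same use of $\dim F(\xx^{(t)})$ as the potential bounding the number of drop steps by $\dim(\cC)$ per progress step, and the same final appeal to Lemma~\ref{lemma:borwein}. The only difference is that you spell out the justification (drop steps force an in-face direction, hence land on the relative boundary of $F(\xx^{(t)})$ and strictly lower the face dimension) that the paper leaves implicit.
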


The proof of Theorem~\ref{thm.in_face_variants} relies on the following analogue of Lemma~\ref{lem.scaling.vertex}.  We omit the proof of Lemma~\ref{lem.in_face.scaling.face} as it is a straightforward modification of the proof of Lemma~\ref{lem.scaling.vertex}.

\begin{lemma}\label{lem.in_face.scaling.face}  Let $\cC\subseteq\R^n$ be a polytope  and let $f\colon \cC \to \R$ be convex and differentiable in an open set containing $\cC$. Let $\xx\in \cC$ such that $\f(X^*,\xx) > 0$. Then,
    \begin{equation}\label{eq.in_face.scaling.face}
        \max_{\aa\in F(\xx), \vv \in \cC} \ip{\nabla f(\xx)}{\aa-\vv} \geq \frac{f(\xx) - f^*}{\f(X^*,\xx)}.
    \end{equation}
    In particular, if $(\cC,f)$ satisfies a $(\mu,\theta)$-error bound relative to $\f$ then for all $\xx\in \cC$
    \begin{equation}\label{eq.in_face.decrease.face}
        \max_{\aa\in F(\xx), \vv \in \cC} \ip{\nabla f(\xx)}{\aa-\vv} \geq \mu^\theta(f(\xx)-f^*))^{1-\theta}.
    \end{equation}
\end{lemma}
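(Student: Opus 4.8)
The plan is to imitate the proof of Lemma~\ref{lem.scaling.vertex}, with the face distance $\f$ playing the role of the vertex distance $\v$ and the minimal face $F(\xx)$ playing the role of a support set $S$. First I would fix $\xx\in\cC$ with $\f(X^*,\xx)>0$ and pick $\xx^*\in X^*$ attaining $\f(\xx^*,\xx)=\f(X^*,\xx)$ (the minimum over the compact set $X^*$). By Definition~\ref{def.in_face_vertex_distance} there then exist $\ww\in\cC$ and $\uu\in F(\xx)$ with
\[
\xx^*-\xx=\f(X^*,\xx)\,(\ww-\uu),
\]
and $\f(X^*,\xx)>0$ lets us solve for $\uu-\ww=\tfrac{1}{\f(X^*,\xx)}(\xx-\xx^*)$.

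Next I would lower bound the left-hand side of \eqref{eq.in_face.scaling.face}. Since $\uu\in F(\xx)$ and $\ww\in\cC$, the pair $(\uu,\ww)$ is admissible in the maximization, so
\[
\max_{\aa\in F(\xx),\,\vv\in\cC}\ip{\nabla f(\xx)}{\aa-\vv}\ \ge\ \ip{\nabla f(\xx)}{\uu-\ww}\ =\ \frac{\ip{\nabla f(\xx)}{\xx-\xx^*}}{\f(X^*,\xx)}\ \ge\ \frac{f(\xx)-f^*}{\f(X^*,\xx)},
\]
where the last step uses convexity of $f$, namely $\ip{\nabla f(\xx)}{\xx-\xx^*}\ge f(\xx)-f(\xx^*)=f(\xx)-f^*\ge 0$. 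This is exactly \eqref{eq.in_face.scaling.face}. For the ``in particular'' claim I would argue as in Lemma~\ref{lem.scaling.vertex}: if $\xx\in X^*$ then \eqref{eq.in_face.decrease.face} is trivial since its right-hand side is $0$ and its left-hand side is $\ge 0$ (take $\aa=\vv$); and if $\xx\notin X^*$ then $\f(\xx^*,\xx)>0$ for every $\xx^*\in X^*$, hence $\f(X^*,\xx)>0$, so \eqref{eq.in_face.scaling.face} applies and combining it with the $(\mu,\theta)$-error bound $\f(X^*,\xx)\le\bigl((f(\xx)-f^*)/\mu\bigr)^{\theta}$ gives
\[
\max_{\aa\in F(\xx),\,\vv\in\cC}\ip{\nabla f(\xx)}{\aa-\vv}\ \ge\ \frac{f(\xx)-f^*}{\f(X^*,\xx)}\ \ge\ \mu^{\theta}(f(\xx)-f^*)^{1-\theta}.
\]

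I do not expect a genuine obstacle here: the argument is essentially a line-by-line transcription of the proof of Lemma~\ref{lem.scaling.vertex}, and it is in fact slightly simpler because the face distance involves no outer maximum over support sets. The only points meriting a sentence of care are that the minimizing $\xx^*$ exists (compactness of $X^*$) and that the witness $(\ww,\uu)$ realizing $\f(\xx^*,\xx)$ exists and satisfies $\uu\in F(\xx)$, so that it is feasible for the maximum on the left-hand side of \eqref{eq.in_face.scaling.face}; this compatibility between the domain $F(\xx)$ in Definition~\ref{def.in_face_vertex_distance} and the in-face away-vertex set is precisely what makes the face distance the right notion for Algorithm~\ref{alg:in_face_fw}.
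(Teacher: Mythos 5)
Your proof is correct and is exactly the "straightforward modification of the proof of Lemma~\ref{lem.scaling.vertex}" that the paper invokes when omitting this proof: pick $\xx^*$ attaining $\f(X^*,\xx)$, use the witnesses $\ww\in\cC$, $\uu\in F(\xx)$ from Definition~\ref{def.in_face_vertex_distance} as a feasible pair for the maximum, and conclude by convexity and then the error bound. Your observation that the argument is slightly cleaner than the vertex-distance case (no outer maximum over support sets, so the witness realizes $\f(\xx^*,\xx)$ exactly rather than some $0<\gamma\le\v(X^*,\xx)$) is accurate.
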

\begin{proof}{Proof of Theorem~\ref{thm.in_face_variants}.}
    This proof is similar that of Theorem~\ref{thm.variants} and thus we are deliberately succinct.
    The choice of $\dd^{(t)}$ implies that for all $t\in \N$
    \begin{equation}\label{eq.in_face.angle.1}
        \ip{\nabla f(\xx^{(t)})}{\dd^{(t)}} \le  \frac{\ip{\nabla f(\xx^{(t)})}{\vv^{(t)}-\aa^{(t)}}}{2} < 0
    \end{equation}
    and
    \begin{equation}\label{eq.in_face.angle.2}
        \ip{\nabla f(\xx^{(t)})}{\dd^{(t)}} \le \ip{\nabla f(\xx^{(t)})}{\vv^{(t)}-\xx^{(t)}} \le f^*-f(\xx).
    \end{equation}
    We proceed by considering three possible cases:

    \begin{enumerate}
        \item $\eta_t < \eta_{t,\max}$. In this case,~\eqref{eq.eta.loose},~\eqref{eq.in_face.angle.1}, and~\eqref{eq.in_face.decrease.face} imply that
\[
                  f(\xx^{(t+1)}) \leq f(\xx^{(t)}) - \frac{\mu^{2\theta}(f(\xx^{(t)}) - f^*)^{2(1-\theta)}}{8L}.
\]
        \item $\eta_t = \eta_{t,\max}\geq 1$.  In this case,~\eqref{eq.eta.tight} and~\eqref{eq.in_face.angle.2} imply that
\[
f(\xx^{(t+1)}) \leq f(\xx^{(t)})-\frac{f(\xx^{(t)})-f^*}{2}. 
\]
Thus if either of Case 1 or Case 2 occurs, then we get
              \[    f(\xx^{(t+1)}) - f^*\leq (f(\xx^{(t)})-f^*)\left(1- \min\left\{
                  \frac{\mu^{2\theta}(f(\xx^{(t)}) - f^*)^{1-2\theta}}{8L},\frac{1}{2}
                  \right\}\right). \]

        \item $\eta_t = \eta_{t,\max} < 1$. In this case the choice of step-size $\eta_t$ implies that
\[
                  f(\xx^{(t+1)}) - f^*\leq f(\xx^{(t)}) - f^*.
\]
    \end{enumerate}
    Whenever Case 3 occurs we have $\dim(F(\xx^{(t+1)}))\le \dim(F(\xx^{(t)}))-1$
    and whenever Case 1 or Case 2 occurs we have $\dim(F(\xx^{(t+1)}))\le \dim(\cC)$.
    Since $\dim(F(\xx^{(t)})) \geq 0$ for all $t\in\N$, Case 3 can occur at most $\dim(\cC)$ times per each occurrence of one of the first two cases.
 Thus for any consecutive sequence of iterations $\{t_0,t_0+1,\dots,t\}$ Case 1 or Case 2 must occur at least $\lceil
        \frac{t-t_0}{\dim(\cC)} \rceil$ times.  The rest of the proof is a straightforward modification of the last part of the proof of Theorem~\ref{thm.variants}.
\end{proof}

Proposition~\ref{prop:aff_err.face} below is an analogue of Proposition~\ref{prop:int_aff_err} and~\ref{prop:aff_err.vertex}.  It shows that $(\cC,f)$ satisfies an error bound relative to $\f$ provided $\cC$ is a polytope and $(\cC,f)$ satisfies a  Hölderian error bound with respect to some norm in $\cC$.  We omit the proof of Proposition~\ref{prop:aff_err.face} as it is a straightforward modification of the proof of Proposition~\ref{prop:aff_err.vertex} via the following analogue of Lemma~\ref{lemma.vertex.facial}.
\begin{lemma}\label{lemma.face.facial}
Let $\R^n$ be endowed with a norm $\|\cdot\|$ and let $\cC \subseteq \R^n$ be a nonempty polytope. Then for all $\xx,\yy\in\cC$
\begin{equation}\label{eq.face.facial}   
              \|\yy-\xx\|\ge \f(\yy,\xx) \cdot \bar \Phi(F(\yy),\cC).
\end{equation}
\end{lemma}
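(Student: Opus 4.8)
The plan is to reduce the statement to a purely geometric inequality about a single optimal representation of $\yy-\xx$, and then exploit the minimality in the definition of $\f$ to place the two endpoints of that representation on suitable disjoint faces.

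First I would dispose of the trivial case: if $\f(\yy,\xx)=0$ then $\yy=\xx$ and \eqref{eq.face.facial} holds. So assume $\gamma^*:=\f(\yy,\xx)>0$, and note $\gamma^*\le 1$ because $\yy-\xx=1\cdot(\yy-\xx)$ is itself an admissible representation ($\yy\in\cC$, $\xx\in F(\xx)$). Fix a minimizing representation $\yy-\xx=\gamma^*(\vv^*-\uu^*)$ with $\vv^*\in\cC$, $\uu^*\in F(\xx)$. Since $\|\yy-\xx\|=\gamma^*\|\vv^*-\uu^*\|$, it suffices to prove the inequality $\|\vv^*-\uu^*\|\ge\bar\Phi(F(\yy),\cC)$, which I will call $(\star)$.

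Next, the heart of the argument: the displacement $\zz^*:=\vv^*-\uu^*=\tfrac1{\gamma^*}(\yy-\xx)$ must lie on the relative boundary of the polytope $\cC-F(\xx)$, since otherwise a slightly longer multiple of $\yy-\xx$ would still lie in $\cC-F(\xx)$, contradicting minimality of $\gamma^*$. Hence $\zz^*$ lies on a proper face of $\cC-F(\xx)$; using that a face of the Minkowski sum $\cC+(-F(\xx))$ exposed by a nonzero functional $\cc$ equals $H+(-G)$ with $H:=\argmax_{\vv\in\cC}\ip{\cc}{\vv}\in\faces(\cC)$ and $G:=\argmin_{\uu\in F(\xx)}\ip{\cc}{\uu}\in\faces(F(\xx))\subseteq\faces(\cC)$, there is such a $\cc$, non-constant on $\cC-F(\xx)$, for which we may take $\vv^*\in H$, $\uu^*\in G$ while keeping the same vector $\zz^*$; one arranges the choice of $\cc$ so that $H\cap G=\emptyset$. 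The delicate point, where minimality of $\gamma^*$ and the fact that $\xx\in\relint(F(\xx))$ enter, is to refine the representation further so that in addition $\vv^*\in F(\yy)$: geometrically one slides $\vv^*$ inside the face $H$ toward $F(\yy)$, compensating by sliding $\uu^*$ inside $G$, using the auxiliary point $\ww:=\tfrac1{1+\gamma^*}(\xx+\gamma^*\vv^*)=\tfrac1{1+\gamma^*}(\yy+\gamma^*\uu^*)\in\cC$ that lies on both segments $[\xx,\vv^*]$ and $[\uu^*,\yy]$.

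Granting $\vv^*\in F(\yy)$ and $\uu^*\in G$ with $H\cap G=\emptyset$, the conclusion follows quickly: put $G_0:=F(\vv^*)$, the minimal face of $\cC$ containing $\vv^*$, so $G_0\subseteq F(\yy)$ (hence $G_0\in\faces(F(\yy))$) and $G_0\subseteq H$; put $H_0:=F(\uu^*)\in\faces(\cC)$, so $H_0\subseteq G$; then $G_0\cap H_0\subseteq H\cap G=\emptyset$, so $(G_0,H_0)$ is admissible in Definition~\ref{def.facial_distance} and
\[
\bar\Phi(F(\yy),\cC)\;\le\;\dist(G_0,H_0)\;\le\;\|\vv^*-\uu^*\|,
\]
the last inequality because $\vv^*\in G_0$, $\uu^*\in H_0$; this is $(\star)$. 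This mirrors the proof of Lemma~\ref{lemma.vertex.facial}, with the convex hull $\conv(S)$ of a support set (which controls the vertex distance $\v$ and the far term $\conv(\vertices(\cC)\setminus G)$ in the inner facial distance) replaced throughout by the full minimal face $F(\xx)$ and by a full face $H$ of $\cC$ — which is precisely why $\Phi$ becomes $\bar\Phi$ — and with no maximization over $S\in\SS(\xx)$ needed. The main obstacle will be the refinement step: showing $\vv^*$ can be moved into $F(\yy)$ while simultaneously keeping $H\cap G=\emptyset$. This forces one to treat separately the generic case $\max_{\cC}\ip{\cc}{\cdot}>\min_{F(\xx)}\ip{\cc}{\cdot}$ and the degenerate case of equality (where $F(\yy)\subseteq H$ is automatic but $H$ and $G$ may fail to be disjoint, requiring a smaller exposing face of $\cC-F(\xx)$), as well as the boundary case $F(\yy)=\cC$, in which $(H,G)$ is already an admissible pair for $\bar\Phi(\cC,\cC)$ and $(\star)$ is immediate.
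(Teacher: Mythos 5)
Your overall strategy --- fix a representation $\yy-\xx=\gamma(\vv-\uu)$, show the two endpoints lie on disjoint faces with the $\vv$-side a face of $F(\yy)$, and compare $\|\vv-\uu\|$ to $\bar\Phi(F(\yy),\cC)$ --- is the right one, and your endgame (passing from disjoint $H,G$ to the admissible pair $F(\vv^*),F(\uu^*)$) is fine \emph{conditional} on the two facts you defer. But those two facts are exactly the content of the lemma, and you never prove them: (i) that an optimal representation can be chosen with $\vv^*\in F(\yy)$, and (ii) that the exposing functional $\cc$ can be chosen so that $H\cap G=\emptyset$. You explicitly label (i) ``the main obstacle'' and describe it only as ``sliding $\vv^*$ inside $H$ toward $F(\yy)$,'' with no argument that such a slide exists, stays in $\cC\times F(\xx)$, preserves the vector $\zz^*$, and keeps the disjointness in (ii). As written this is a plan with the hard step missing, so it is a genuine gap rather than a complete proof. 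Note also that your reduction to $(\star)$ commits you to the \emph{unconstrained} minimizer of $\f$, for which $\vv^*\in F(\yy)$ is not automatic; nothing in the definition of $\f$ forces the optimal $\vv^*$ toward $F(\yy)$.

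The paper's proof sidesteps this entirely with a small but decisive trick that you should adopt: instead of analyzing the minimizer of $\f(\yy,\xx)$ and trying to move it into $F(\yy)$, define the \emph{constrained} quantity
\[
\bar\f(\yy,\xx)=\min\{\gamma\ge0 \mid \yy-\xx=\gamma(\vv-\uu),\ \vv\in F(\yy),\ \uu\in F(\xx)\},
\]
which is feasible (take $\gamma=1$, $\vv=\yy$, $\uu=\xx$) and satisfies $\bar\f(\yy,\xx)\ge\f(\yy,\xx)$ simply because its feasible set is smaller. Since $\|\yy-\xx\|=\bar\f(\yy,\xx)\,\|\vv-\uu\|$ for the constrained minimizer, this inequality points in the right direction and you never need $\vv^*\in F(\yy)$ for the original problem. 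The disjointness $F(\vv)\cap F(\uu)=\emptyset$ then follows from minimality of $\gamma$ by the same perturbation argument as in Lemma~\ref{lemma.vertex.facial}: if some point $\ss$ lay in both minimal faces, then $\vv'=(1+\epsilon)\vv-\epsilon\ss\in F(\yy)$ and $\uu'=(1+\epsilon)\uu-\epsilon\ss\in F(\xx)$ for small $\epsilon>0$ (using $\vv\in\relint F(\vv)$, $\uu\in\relint F(\uu)$), giving $\yy-\xx=\frac{\gamma}{1+\epsilon}(\vv'-\uu')$ and contradicting minimality. This replaces both of your unproven steps (the Minkowski-sum face analysis, the choice of $\cc$, and the sliding) with two lines.
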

Again for ease of exposition, we defer the proof of Lemma~\ref{lemma.face.facial} to Section~\ref{sec.facial_geometry}.

\begin{proposition}[Face distance error bound]\label{prop:aff_err.face}
Let $\R^n$ be endowed with a norm $\|\cdot\|$, let $\cC \subseteq \R^n$ be a nonempty polytope, and let $f\colon\cC\to\R$ be convex.
    Suppose that $(\cC,f)$ satisfies a $(\mu, \theta)$-Hölderian error bound with respect to $\|\cdot\|$ for some $\mu>0$ and $\theta \in(0,1/2]$. Then
    $(\cC, f)$ satisfies a $(\tilde \mu,\theta)$ error bound relative to $\f$ for 
    \[
    \tilde \mu = \mu \cdot \bar \Phi(F(X^*),\cC)^{1/\theta}.
    \]
\end{proposition}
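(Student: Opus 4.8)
The plan is to mirror the proof of Proposition~\ref{prop:int_aff_err} (and of Proposition~\ref{prop:aff_err.vertex}) essentially verbatim, substituting Lemma~\ref{lemma.face.facial} for Lemma~\ref{lemma.vertex.facial}. First I would invoke Lemma~\ref{lemma.face.facial}, which gives, for every $\xx,\yy\in\cC$, the inequality $\|\yy-\xx\|\ge \f(\yy,\xx)\cdot\bar\Phi(F(\yy),\cC)$. Specializing this to $\yy = \xx^*$ ranging over $X^*$ and taking the minimum over $\xx^*\in X^*$, I would obtain for all $\xx\in\cC$
\[
\min_{\xx^*\in X^*}\|\xx^*-\xx\| \ge \f(X^*,\xx)\cdot\min_{\xx^*\in X^*}\bar\Phi(F(\xx^*),\cC).
\]
The one point that needs a word of justification—and which I expect to be the only nontrivial step—is why $\min_{\xx^*\in X^*}\bar\Phi(F(\xx^*),\cC)$ can be replaced by $\bar\Phi(F(X^*),\cC)$, i.e.\ why $\bar\Phi(F(\xx^*),\cC)\ge\bar\Phi(F(X^*),\cC)$ for each $\xx^*\in X^*$. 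This follows because $F(\xx^*)\subseteq F(X^*)$ (the minimal face containing a point is contained in the minimal face containing any superset), so $\faces(F(\xx^*))\subseteq\faces(F(X^*))$, and hence the minimum defining $\bar\Phi(F(\xx^*),\cC)$ is taken over a smaller index set, making it no smaller than $\bar\Phi(F(X^*),\cC)$. (Equivalently one may note $\f(X^*,\xx)\le\f(\xx^*,\xx)$ is not needed; the monotonicity of $\bar\Phi$ in its first argument with respect to face inclusion is the crux.)

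Next I would combine this with the assumed $(\mu,\theta)$-Hölderian error bound, which states that for all $\xx\in\cC$
\[
\left(\frac{f(\xx)-f^*}{\mu}\right)^\theta \ge \min_{\xx^*\in X^*}\|\xx^*-\xx\| \ge \f(X^*,\xx)\cdot\bar\Phi(F(X^*),\cC).
\]
Dividing through by $\bar\Phi(F(X^*),\cC)^{1/\theta}$ inside the $\theta$-th power—that is, setting $\tilde\mu = \mu\cdot\bar\Phi(F(X^*),\cC)^{1/\theta}$—rearranges the last display into
\[
\left(\frac{f(\xx)-f^*}{\tilde\mu}\right)^\theta \ge \f(X^*,\xx),
\]
which is exactly the $(\tilde\mu,\theta)$-error bound relative to $\f$. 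Since $\bar\Phi(F(X^*),\cC)\ge\Phi(\cC,\cC)>0$, the constant $\tilde\mu$ is strictly positive, so the statement is meaningful.

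I do not anticipate any genuine obstacle here: the entire argument is the obvious transcription of the two preceding propositions, and the paper explicitly flags this by saying ``We omit the proof of Proposition~\ref{prop:aff_err.face} as it is a straightforward modification of the proof of Proposition~\ref{prop:aff_err.vertex}.'' The only care needed is the remark that $\bar\Phi$ is the relevant quantity rather than $\Phi$ (because the face distance $\f$ uses $\uu\in F(\xx)$, a larger set than $\conv(S)$, hence a smaller distance, hence needs the larger lower-bound constant $\bar\Phi\ge\Phi$), but this is already baked into Lemma~\ref{lemma.face.facial} and does not resurface in this proof. If one wanted to be fully self-contained one would also cite the inequality $\bar\Phi(F,\cC)\ge\Phi(\cC,\cC)>0$ recorded immediately after Definition~\ref{def.facial_distance} to conclude $\tilde\mu>0$.
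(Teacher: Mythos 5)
Your proof is correct and is exactly the ``straightforward modification'' the paper intends: you apply Lemma~\ref{lemma.face.facial} in place of Lemma~\ref{lemma.vertex.facial} and then repeat the argument of Proposition~\ref{prop:aff_err.vertex}, and your extra remark justifying $\bar\Phi(F(\xx^*),\cC)\ge\bar\Phi(F(X^*),\cC)$ via $F(\xx^*)\subseteq F(X^*)$ correctly fills the one small step the paper leaves implicit. Nothing further is needed.
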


\subsubsection{In-face FW for polytopes in standard form}

So far, we discussed IFW for general polytopes. Notably, when a $(\mu,\theta)$-error bound holds with $\theta=1/2$, we derived the convergence guarantee $\cO(\exp(-\frac{t}{\dim(\cC)}))$. In the theorem below, when $\cC$ is a polytope in standard form, that is, 
\begin{equation}\label{eq.standard.form}
\cC = \{\xx\in\R^n \mid A\xx = \bb, \xx \geq \0\},
\end{equation}
where $A\in\R^{m\times n}$, $\rank(A) = m$, and $\bb\in\R^m$, we derive an alternate convergence rate of order $\cO(\exp(-\frac{t}{m}))$ which is sharper when $m < \dim(\cC)$.

\begin{theorem}[IFW for polytopes in standard form]\label{thm.in_face_variants_polytope}
Let $\cC\subseteq \R^n$ be a polytope of the form~\eqref{eq.standard.form}, 
 let $f\colon \cC \to \R$ be convex and differentiable in an open set containing $\cC$, and suppose that $(\cC,f)$ has extended $L$-curvature for some $L>0$,  and satisfies the $(\mu,\theta)$-error bound with $\theta \in [0,1/2]$ relative to $\f$ for some  $\mu > 0$ and $\theta\in[0,1/2]$. Suppose Algorithm~\ref{alg:in_face_fw} starts with $\xx^{(0)} \in \vertices(\cC)$. Then the iterates of Algorithm~\ref{alg:in_face_fw} with line-search or short-step satisfy the following convergece rates.
\begin{itemize}
\item    
    When $\theta = 1/2$ the following linear convergence rate holds for all $t\in\N$:
    \begin{align}\label{eq.in_face_polytope.linear.fwa}
        f(\xx^{(t)})-f^* \leq ( f(\xx^{(0)}) - f^* )\left(1-\min\left\{\frac{\mu}{8L}, \frac{1}{2}\right\}\right)^{\left\lceil t/m\right\rceil}.
    \end{align}
\item    When $\theta \in [0,1/2)$ the following initial linear convergence rate holds for $t\in \{0,1,\dots,t_0\}$:
    \begin{align}\label{eq.in_face_polytope.linearregime.fwa}
        f(\xx^{(t)})-f^* \leq \frac{f(\xx^{(0)}) -f^*}{2^{\lceil t/m \rceil}},
    \end{align}
    where $t_0\in\N$ is the smallest $t$ such that
    $$\frac{\mu^{2\theta}(f(\xx^{(t)}) - f^*)^{1-2\theta}}{8L} \le \frac{1}{2}.$$
For $t\in \N$ with $t_0 < t \le t_1:=\dim(F(\xx^{(t_0)}))+t_0$ the following  holds:
\begin{equation}\label{eq.constantregime.fwinface}
f(\xx^{(t)})-f^* \le f(\xx^{(t_0)}) -f^*.
\end{equation}
Finally, for $t\in \N$ with $t > t_1$,
the following sublinear convergence rate holds:
    \begin{equation}\label{eq.in_face_polytope.sublinearregime.fwa}
          f(\xx^{(t)})-f^* \leq\left((f(\xx^{(t_1)}) -f^*)^{(2\theta-1)} + \frac{(1-2\theta)\mu^{2\theta}}{8L} \cdot \left\lceil \frac{t-t_1}{m} \right\rceil\right)^{\frac{1}{2\theta-1}}.
    \end{equation}
    \end{itemize}
\end{theorem}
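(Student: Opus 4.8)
The plan is to reuse the proof of Theorem~\ref{thm.in_face_variants} almost verbatim and to sharpen only the combinatorial ingredient that bounds the number of non-progress iterations. Exactly as there, the selection rule for $\dd^{(t)}$ together with Lemma~\ref{lem.in_face.scaling.face}, the extended $L$-curvature, and the progress bounds~\eqref{eq.eta.loose}--\eqref{eq.eta.tight} show that at each iteration $t$ one of two alternatives holds: if $\eta_t < \eta_{t,\max}$ or $\eta_t = \eta_{t,\max}\ge 1$ (a \emph{progress iteration}, i.e.\ Case~1 or Case~2 in the terminology of the proof of Theorem~\ref{thm.in_face_variants}), then
\[
f(\xx^{(t+1)}) - f^* \le (f(\xx^{(t)}) - f^*)\Bigl(1 - \min\Bigl\{\tfrac{\mu^{2\theta}(f(\xx^{(t)})-f^*)^{1-2\theta}}{8L},\tfrac12\Bigr\}\Bigr),
\]
whereas if $\eta_t = \eta_{t,\max} < 1$ (Case~3) one only has $f(\xx^{(t+1)}) - f^* \le f(\xx^{(t)}) - f^*$. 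In all cases $f(\xx^{(t)}) - f^*$ is nonincreasing, so the theorem follows from Lemma~\ref{lemma:borwein} (applied with $p = 1-2\theta$ and $\sigma = \mu^{2\theta}/8L$) together with the end-of-proof bookkeeping of Theorem~\ref{thm.variants}, once we show that, when Algorithm~\ref{alg:in_face_fw} is started at a vertex, over any window $\{t_0,t_0+1,\dots,t\}$ the number of Case~3 iterations is at most $\dim(F(\xx^{(t_0)}))$ plus $m$ times the number of progress iterations; equivalently, with $t_1 := t_0 + \dim(F(\xx^{(t_0)}))$, that at least $\lceil (t - t_1)/m\rceil$ of the iterations in $\{t_1,\dots,t\}$ are progress iterations.

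The new input is a description of the facial structure of~\eqref{eq.standard.form}. For $\xx\in\cC$ write $\supp(\xx) = \{i : x_i > 0\}$; since every facet-defining inequality of $\cC$ has the form $x_i\ge 0$, the minimal face is $F(\xx) = \{\yy\in\cC \mid y_i = 0\text{ for }i\notin\supp(\xx)\}$, so $\dim(F(\xx)) = |\supp(\xx)| - \rank(A_{\supp(\xx)})$, with $A_S$ the submatrix of $A$ whose columns are indexed by $S$. Setting $\Psi(t) := \dim(F(\xx^{(t)}))\ge 0$, I would record three facts about the effect of a single iteration. (i) If $\dd^{(t)}\in\{\dd^{(t)}_{\text{inAFW}},\dd^{(t)}_{\text{inBPFW}}\}$, then, because $\aa^{(t)},\zz^{(t)}\in F(\xx^{(t)})$, the entire step stays inside $F(\xx^{(t)})$, hence $\Psi(t+1)\le\Psi(t)$, with a strict decrease precisely when the maximal step-size is used (Cases~2 and~3 for these directions), since then $\xx^{(t+1)}$ lands on a proper face of $F(\xx^{(t)})$. (ii) If $\dd^{(t)} = \dd^{(t)}_{\text{FW}} = \vv^{(t)} - \xx^{(t)}$ with $\eta_t < 1$, then $\supp(\xx^{(t+1)}) = \supp(\xx^{(t)})\cup\supp(\vv^{(t)})$; as $\vv^{(t)}$ is a vertex of a standard-form polytope, the columns $\{A_{\cdot,i} : i\in\supp(\vv^{(t)})\}$ are linearly independent, so $|\supp(\vv^{(t)})|\le\rank(A) = m$, and monotonicity of $\rank(A_{\cdot})$ gives $\Psi(t+1)\le\Psi(t) + m$. (iii) If $\dd^{(t)} = \dd^{(t)}_{\text{FW}}$ with $\eta_t = 1$, then $\xx^{(t+1)} = \vv^{(t)}$ is a vertex and $\Psi(t+1) = 0$. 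Note that for $\dd^{(t)}_{\text{FW}}$ the maximum step-size is always $1$, so Case~3 can occur only for an in-face direction, and every iteration that can raise $\Psi$ is a progress iteration.

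The count is then a telescoping estimate on $\Psi$: over $\{t_0,\dots,t\}$ each Case~3 iteration lowers $\Psi$ by at least $1$, each Frank-Wolfe-direction iteration raises it by at most $m$, and no other iteration raises it; since $\Psi\ge 0$ and the number of Frank-Wolfe-direction iterations is at most the number of progress iterations, the desired bound on the number of Case~3 iterations follows. Feeding this count into the bookkeeping at the end of the proof of Theorem~\ref{thm.variants}, and into Lemma~\ref{lemma:borwein}, yields the initial linear regime up to $t_0$, the flat regime $(t_0,t_1]$ of~\eqref{eq.constantregime.fwinface}, and the rates~\eqref{eq.in_face_polytope.linear.fwa}--\eqref{eq.in_face_polytope.sublinearregime.fwa} with the claimed $\lceil t/m\rceil$-type exponents. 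I expect the main obstacle to be the structural lemma for standard-form polytopes together with this amortized accounting --- in particular, being careful that a single Frank-Wolfe iteration can both reset $\Psi$ to $0$ and, from a low-dimensional face, inflate it by as much as $m$, and correctly absorbing the transient window $(t_0,t_1]$ on which no decrease is guaranteed; the rest is a transcription of the arguments already used for Theorems~\ref{thm.variants} and~\ref{thm.in_face_variants}.
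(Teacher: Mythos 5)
Your proposal follows essentially the same route as the paper: the analytic part is carried over verbatim from Theorem~\ref{thm.in_face_variants}, and the only new ingredient is the observation that in standard form a progress step can increase $\dim(F(\xx^{(t)}))$ by at most $m$ (the paper states this as "the reduction in the number of binding inequalities is at most $m$", which is exactly your support/rank argument, since a vertex of~\eqref{eq.standard.form} has at most $m$ nonzero coordinates), while a Case~3 step decreases it by at least $1$. Your amortized count even shares the paper's slight looseness (the telescoping argument literally yields at least $\lceil (t-t_1)/(m+1)\rceil$ progress iterations rather than $\lceil (t-t_1)/m\rceil$), so the two proofs match in both substance and level of rigor.
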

\begin{proof}{Proof.}
    The proof deviates from the proof of Theorem~\ref{thm.in_face_variants} only in the analysis of the number of drop steps. We focus on this aspect.

    Whenever Case 3 occurs we have $\dim(F(\xx^{(t+1)}))\le \dim(F(\xx^{(t)}))-1$.  On the other hand, whenever Case 1 or Case 2 occurs, the standard form description of $\cC$ implies that $\dim(F(\xx^{(t+1)}))\le \dim(F(\xx^{(t)}))+m$ because the reduction in the number of binding inequalities from $F(\xx^{(t)})$ to $F(\xx^{(t+1)})$ is at most $m$.
    Since $\dim(F(\xx^{(t)})) \geq 0$ for all $t\in\N$,
it follows that for any consecutive sequence of iterations $\{t_0,t_0+1,\dots,t\}$ with  $t >  t_1 = \dim(F(\xx^{(t_0)}))+t_0$ Case 1 or Case 2 must occur at least $\lceil
        \frac{t-t_1}{m} \rceil$ times.   The rest of the proof is analogous to the proof of Theorem~\ref{thm.in_face_variants}.
\end{proof}

\subsubsection{In-face pairwise Frank-Wolfe for simplex-like polytopes}
This section features an interesting connection with
\cite{garber2016linear} where Garber and Meshi introduced a decomposition-invariant pairwise Frank-Wolfe algorithm for \emph{simplex-like polytopes} whose convergence properties depends on the sparsity of the optimal solution set.

\begin{definition}[Simplex-like polytope]\label{def.slp}
    A polytope $\cC = \{\xx\in\R^n \mid A\xx = \bb, \xx\geq 0\}$ whose vertices are contained in $\{0, 1\}^n$, where $A\in\R^{m\times n}$ and $\bb\in\R^m$, is called a \emph{simplex-like polytope}.
\end{definition}
We present a FW variant that is similar to the decomposition-invariant pairwise Frank-Wolfe algorithm of Garber and Meshi~\cite{garber2016linear} in Algorithm~\ref{alg:in_face_fw.slp}.
Like Algorithm~\ref{alg:in_face_fw}, Algorithm~\ref{alg:in_face_fw.slp} relies on the in-face linear minimization oracle \eqref{eq.inface.oracle}.
Unlike Algorithm~\ref{alg:in_face_fw}, Algorithm~\ref{alg:in_face_fw.slp} uses the \emph{pairwise direction}:
\begin{equation}\label{eq.inface.pairwise}
    \dd_{\text{PW}}  = \vv-\aa,
\end{equation}
where $\aa = \argmax_{\yy\in F(\xx)} \ip{\nabla f(\xx)}{\yy}$ and $\vv = \argmin_{\yy\in \cC} \ip{\nabla f(\xx)}{\yy}$.  Algorithm~\ref{alg:in_face_fw.slp} also uses the following target short-step rule
\[
 - \frac{\langle \nabla f(\xx), \dd_\text{PW} \rangle}{L} = 
\argmin_{\gamma \ge 0}\left\{ 
f(\xx) + \gamma\langle \nabla f(\xx), \dd_\text{PW} \rangle+\frac{L\gamma^2}{2}
\right\}.
\]

\begin{algorithm}[t]
    \caption{Frank-Wolfe algorithm with in-face pairwise directions for simplex-like-polytopes (FWIPW)}\label{alg:in_face_fw.slp}
\begin{algorithmic}
    \State{\bf Input: }{$\xx^{(0)}\in\vertices(\cC)$
    }
    \State{\bf Output: }{$\xx^{(t)}\in\cC$  for $t\in\N$.}
    \hrulealg
\State    $\eta_{-1}\gets 1$
    \For{$t\in\N$}
\State    {compute $\dd^{(t)}_{\text{PW}}$ as in~\eqref{eq.inface.pairwise} for $\xx = \xx^{(t)}$} 
\State
    {$\gamma_t \gets - \frac{\langle \nabla f(\xx^{(t)}), \dd^{(t)}_\text{PW} \rangle}{L}$}
    \State
    {$\eta_t \gets \max \{2^{-k_t} \mid k_t\in\N, 2^{-k_t} \le \min\{\gamma_t, \eta_{t-1}\}\}$} \State
    {$\xx^{(t+1)} \gets \xx^{(t)} + \eta_t \dd^{(t)}_{\text{PW}}$}
    \EndFor
\end{algorithmic}    %
\end{algorithm}

We next establish affine-invariant convergence results for Algorithm~\ref{alg:in_face_fw.slp} analogous to but stronger than those in Theorems~\ref{thm.variants},~\ref{thm.in_face_variants}, and~\ref{thm.in_face_variants_polytope}.

\begin{theorem}[FWIPW]\label{thm.in_face_variants.slp}
    Let $\cC\subseteq\R^n$ be a simplex-like polytope, let $f\colon \cC \to \R$ be convex and differentiable in an open set containing $\cC$, and suppose that $(\cC,f)$ has extended $L$-curvature for some $L > 0$, and satisfies the $(\mu,\theta)$-error bound relative to $\f$ for some $\mu > 0$ and $\theta\in [0,1/2]$.  Then the iterates of Algorithm~\ref{alg:in_face_fw.slp} satisfy the following initial linear convergence rate for $t\in \{0,1,\dots,t_0\}$:
    \begin{equation}\label{eq.linearregime.in_face_fw.slp}
        f(\xx^{(t)})-f^* \leq\frac{f(\xx^{(0)}) - f^*}{2^t},
    \end{equation}
    where $t_0\in\N$ is the smallest $t$ such that
    $$-\langle \nabla f(\xx^{(t)}), \dd^{(t)}_\text{PW}\rangle < L.$$
Then  the following convergence rates hold.    
\begin{itemize}
\item
    When $\theta = 1/2$ the following linear convergence rate holds for all  $t \in \N_{\ge t_0}$:
\[
        f(\xx^{(t)}) - f^* \leq ( f(\xx^{(t_0)}) - f^* ) \left(1-\frac{\mu}{4L}\right)^{t-t_0}.
\]
\item    When $\theta \in [0,1/2)$ the following sublinear convergence rate holds for all  $t \in \N_{\ge t_0}$:
\[
        f(\xx^{(t)}) - f^* \leq \left( (f(\xx^{(t_0)})-f^*)^{(2\theta-1)} + \frac{(1-2\theta)\mu^{2\theta}(t-t_0)}{4L} \right)^{\frac{1}{2\theta-1}}.
\]
    \end{itemize}
\end{theorem}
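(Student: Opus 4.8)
The plan is to reuse the template behind Theorems~\ref{thm.vanilla}--\ref{thm.in_face_variants}: derive a per-iteration multiplicative decrease
\[
f(\xx^{(t+1)})-f^* \le (f(\xx^{(t)})-f^*)\Bigl(1-\tfrac{\mu^{2\theta}}{4L}(f(\xx^{(t)})-f^*)^{1-2\theta}\Bigr)\qquad (t\ge t_0),
\]
and conclude via a geometric recursion when $\theta=1/2$ and via Lemma~\ref{lemma:borwein} (with $\beta_t=f(\xx^{(t_0+t)})-f^*$, $p=1-2\theta$, $\sigma_t=\tfrac{\mu^{2\theta}}{4L}$) when $\theta\in[0,1/2)$. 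Two ingredients require genuinely new work here. First, unlike in Algorithms~\ref{alg:tfw.full} and~\ref{alg:in_face_fw}, feasibility $\xx^{(t)}\in\cC$ of the pairwise update is \emph{not} guaranteed by a maximum step-size oracle and must be extracted from the simplex-like structure. Second, the step-size $\eta_t$ is produced by an adaptive, backtracking-type rule, so one must certify that it is comparable, up to a constant, to the short-step value $\gamma_t=-\ip{\nabla f(\xx^{(t)})}{\dd^{(t)}_{\text{PW}}}/L$.

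\emph{Feasibility.} I would prove by induction on $t$ the invariant: $\xx^{(t)}\in\cC$ and every coordinate of $\xx^{(t)}$ is a nonnegative integer multiple of $\eta_{t-1}$. The base case holds because $\xx^{(0)}\in\vertices(\cC)\subseteq\{0,1\}^n$ and $\eta_{-1}=1$. For the step, $\eta_t$ is a power of $1/2$ with $\eta_t\le\eta_{t-1}$, so $\eta_{t-1}/\eta_t\in\N$ and the coordinates of $\xx^{(t)}$ are also nonnegative integer multiples of $\eta_t$. Since $\cC$ is in standard form, the minimal face $F(\xx^{(t)})$ consists of the points of $\cC$ supported on $\supp(\xx^{(t)})$; hence the in-face away vertex $\aa^{(t)}\in\{0,1\}^n$ satisfies $\aa^{(t)}_i=1\Rightarrow\xx^{(t)}_i\ge\eta_{t-1}\ge\eta_t$. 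Because $\aa^{(t)},\vv^{(t)}\in\vertices(\cC)$ satisfy $A\aa^{(t)}=A\vv^{(t)}=\bb$, the update $\xx^{(t+1)}=\xx^{(t)}+\eta_t(\vv^{(t)}-\aa^{(t)})$ obeys $A\xx^{(t+1)}=\bb$; it is nonnegative because a coordinate decreases only when $\vv^{(t)}_i=0=1-\aa^{(t)}_i$, in which case $\xx^{(t)}_i-\eta_t\ge\eta_{t-1}-\eta_t\ge 0$; and each coordinate of $\xx^{(t+1)}$ is $\eta_t$ times an integer, hence a nonnegative integer multiple of $\eta_t$. This closes the induction and is the ``decomposition-invariant'' mechanism of Garber--Meshi; in my view it is the main obstacle.

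\emph{Progress and the step-size.} Put $g_t:=-\ip{\nabla f(\xx^{(t)})}{\dd^{(t)}_{\text{PW}}}=\ip{\nabla f(\xx^{(t)})}{\aa^{(t)}-\vv^{(t)}}$. If $g_t=0$ then $\vv^{(t)}$ shows $\xx^{(t)}$ minimizes $f$ over $\cC$ and we are done, so assume $g_t>0$. By the choice of $\aa^{(t)},\vv^{(t)}$ one has $g_t=\max_{\aa\in F(\xx^{(t)}),\,\vv\in\cC}\ip{\nabla f(\xx^{(t)})}{\aa-\vv}$, so Lemma~\ref{lem.in_face.scaling.face} yields $g_t\ge\mu^\theta(f(\xx^{(t)})-f^*)^{1-\theta}$; splitting $\aa^{(t)}-\vv^{(t)}=(\aa^{(t)}-\xx^{(t)})+(\xx^{(t)}-\vv^{(t)})$ and using $\ip{\nabla f(\xx^{(t)})}{\aa^{(t)}-\xx^{(t)}}\ge 0$ together with convexity ($\ip{\nabla f(\xx^{(t)})}{\xx^{(t)}-\vv^{(t)}}\ge f(\xx^{(t)})-f^*$) also gives $g_t\ge f(\xx^{(t)})-f^*$. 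By feasibility and extended $L$-curvature, $f(\xx^{(t+1)})\le f(\xx^{(t)})-\eta_t g_t+\tfrac{L\eta_t^2}{2}$, and since $\eta_t\le\gamma_t=g_t/L$ the bracket $g_t-L\eta_t/2\ge g_t/2$, so $f(\xx^{(t+1)})\le f(\xx^{(t)})-\tfrac{\eta_t g_t}{2}$. For $t<t_0$ one has $g_t\ge L$, i.e.\ $\gamma_t\ge 1$, and an easy induction (base $\eta_{-1}=1$) gives $\min\{\gamma_t,\eta_{t-1}\}=1$ and hence $\eta_t=1$, so $f(\xx^{(t+1)})-f^*\le\tfrac12(f(\xx^{(t)})-f^*)$, which is~\eqref{eq.linearregime.in_face_fw.slp}. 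For $t\ge t_0$ I would argue $\eta_t>\gamma_t/2$: at $t=t_0$, $\eta_{t_0-1}=1>\gamma_{t_0}$, so $\eta_{t_0}$ is the largest power of $1/2$ below $\gamma_{t_0}$, hence $>\gamma_{t_0}/2$, and this is propagated using that $(\eta_t)$ is non-increasing and shadows $(\gamma_t)$ --- pinning down this ``backtracking lag'' is the remaining technical point. Granting $\eta_t>\gamma_t/2$ gives $f(\xx^{(t+1)})\le f(\xx^{(t)})-\tfrac{g_t^2}{4L}\le f(\xx^{(t)})-\tfrac{\mu^{2\theta}}{4L}(f(\xx^{(t)})-f^*)^{2(1-\theta)}$, i.e.\ the displayed decrease.

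\emph{Conclusion.} When $\theta=1/2$ the displayed decrease reads $f(\xx^{(t+1)})-f^*\le(1-\tfrac{\mu}{4L})(f(\xx^{(t)})-f^*)$ for $t\ge t_0$, and iterating from $t_0$ gives the claimed linear rate; when $\theta\in[0,1/2)$, Lemma~\ref{lemma:borwein} with the parameters above gives the claimed sublinear rate. In contrast to Theorems~\ref{thm.variants} and~\ref{thm.in_face_variants_polytope}, no intermediate ``constant regime'' is needed, since the pairwise step strictly decreases $f$ at \emph{every} iteration $t\ge t_0$ --- there are no pure drop steps.
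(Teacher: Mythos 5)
Your overall architecture --- the dyadic integrality invariant for feasibility, the observation that $\eta_t=1$ for $t<t_0$, and the curvature-based progress bound $f(\xx^{(t+1)})\le f(\xx^{(t)})-\tfrac{\eta_t g_t}{2}$ --- coincides with the paper's proof. The genuine gap is exactly the step you flag as unresolved: the claim that $\eta_t>\gamma_t/2$ for all $t\ge t_0$ is \emph{false} in general, and no ``shadowing'' argument can rescue it. The reason is that $(\gamma_t)$ is not monotone (the in-face away gap can jump up when the support of the iterate grows and the gradient changes), whereas the rule forces $(\eta_t)$ to be non-increasing. Concretely, if $\gamma_{t_0}=0.3$ then $\eta_{t_0}=1/4$; if at the next iteration $\gamma_{t_0+1}=0.9$, the rule caps $\eta_{t_0+1}$ at $\eta_{t_0}=1/4<\gamma_{t_0+1}/2$. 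So the chain $f(\xx^{(t+1)})\le f(\xx^{(t)})-\tfrac{\eta_t g_t}{2}\le f(\xx^{(t)})-\tfrac{g_t^2}{4L}$ breaks at its second inequality.

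The repair --- which is what the paper does --- is to abandon the comparison of $\eta_t$ with $\gamma_t$ and instead prove by induction the weaker but sufficient bound
$\eta_t\ \ge\ \frac{\mu^\theta\,(f(\xx^{(t)})-f^*)^{1-\theta}}{2L}$ for all $t\ge t_0$.
The base case is your $\eta_{t_0}\ge\gamma_{t_0}/2$ combined with \eqref{eq.in_face.decrease.face}. For the inductive step there are two cases. If $\gamma_{t+1}\le\eta_t$, then indeed $\eta_{t+1}\ge\gamma_{t+1}/2$ and Lemma~\ref{lem.in_face.scaling.face} gives the bound at $t+1$. If $\gamma_{t+1}\ge\eta_t$, then $\eta_{t+1}=\eta_t$ (since $\eta_t$ is itself a power of $1/2$), and the inductive hypothesis transfers to $t+1$ because the suboptimality gap is non-increasing (which you already have from $f(\xx^{(t+1)})\le f(\xx^{(t)})-\tfrac{\eta_t g_t}{2}$ and $g_t\ge 0$). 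Plugging this lower bound on $\eta_t$, together with $g_t\ge\mu^\theta(f(\xx^{(t)})-f^*)^{1-\theta}$, into $f(\xx^{(t+1)})\le f(\xx^{(t)})-\tfrac{\eta_t g_t}{2}$ yields precisely the per-iteration decrease $f(\xx^{(t+1)})\le f(\xx^{(t)})-\tfrac{\mu^{2\theta}}{4L}(f(\xx^{(t)})-f^*)^{2(1-\theta)}$ you wanted, after which your conclusion via the geometric recursion and Lemma~\ref{lemma:borwein} goes through unchanged.
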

\begin{proof}{Proof.}
    We first prove that the iterates are feasible, that is, $\xx^{(t)}\in\cC$ for all $t\in\N$.  We do so via the following two claims that are essentially identical to Observation 2 and Lemma 1 in~\cite{garber2016linear}.\\    
      
    \noindent
    \textbf{Claim 1:} Suppose that for some iteration $t\in\N$ of Algorithm~\ref{alg:in_face_fw.slp}, it holds that $\xx^{(t)}\in\cC$ and 
 $x_i^{(t)} \geq \eta_t$ whenever $x_i^{(t)}>0$.  Then $\xx^{(t+1)}\in\cC$.\\
    \noindent
    \textit{Proof of Claim 1:} Since
 $\aa^{(t)} = \argmin_{\yy \in F(\xx^{(t)})}\ip{\nabla f(\xx^{(t)})}{\yy}$, it follows that $a_i^{(t)} = 0$ whenever $x_i^{(t)} = 0$. Therefore since $\aa^{(t)}\in \{0,1\}^n$, our assumption on the positive entries of $\xx^{(t)}$ implies that $\xx^{(t)} -\eta_t\aa^{(t)} \ge 0.$  Since $\vv^{(t)}\in\cC$, in particular $\vv^{(t)} \ge \0$ and thus $\xx^{(t+1)} = \xx^{(t)} -\eta_t\aa^{(t)}+\eta_t \vv^{(t)} \geq \0$. Furthermore, since $\xx^{(t)}, \aa^{(t)},\vv^{(t)}\in\cC$, in particular $A\xx^{(t)} = A \aa^{(t)}= A\vv^{(t)}=\bb$ and hence  $A\xx^{(t+1)} = \bb$ as well. Therefore, $\xx^{(t+1)}\in\cC$.
    \\
    
    \noindent
    \textbf{Claim 2:}
    The iterates of Algorithm~\ref{alg:in_face_fw.slp} are feasible, that is, $\xx^{(t)}\in\cC$ for all $t\in\N$.\\
    \noindent
    \textit{Proof of Claim 2:}
    We prove by induction that for all $t\in\N$, there exists $\aalpha^{(t)}\in\N^n$ such that $\xx^{(t)}= \eta_{t} \aalpha^{(t)}$. The feasibility of the iterates then follows from Claim 1.
    For the base case $t=0$ observe that since $\xx^{(0)}$ is a vertex of $\cC$, it holds that $x_i^{(0)}\in\{0,1\}$ for $i=1,\dots,n$.  Since $\eta_0 = 2^{-k_0}$ for some $k_0\in\N$, it follows that $\xx^{(0)}= \eta_{0} \aalpha^{(0)}$ for $\aalpha^{(0)} = 2^{k_0}   \xx^{(0)}\in\N^n$. Suppose that the induction hypothesis holds for some $t\in\N$. Since subtracting $\eta_t\aa^{(t)}$ from $\xx^{(t)}$ can only decrease positive entries in $\xx^{(t)}$, see the proof of Claim 1, and since both $\aa^{(t)}$ and $\vv^{(t)}$ are vertices of $\cC$, the $i$-th entry of $\xx^{(t+1)}$ is given by
    \begin{align*}x_i^{(t+1)} = \eta_{t} \cdot
        \begin{cases}
            \alpha_i^{(t)},     & \text{if} \ \alpha_i(t)\geq 1 \ \text{and} \ a_i^{(t)}=v_i^{(t)}\in\{0,1\}            \\
            \alpha_i^{(t)} - 1, & \text{if} \ \alpha_i(t)\geq 1 \ \text{and} \ a_i^{(t)}=1 \ \text{and} \ v_i^{(t)} = 0 \\
            \alpha_i^{(t)} + 1, & \text{if} \ a_i^{(t)}= 0 \ \text{and} \ v_i^{(t)} = 1.
        \end{cases}
    \end{align*}
 Thus $\xx^{(t+1)} = \eta_t \tilde \aalpha^{(t)}$ for some $\tilde\aalpha^{(t)} \in \N^n$.
    Since $\eta_{t+1} = 2^{-k_{t+1}} \leq 2^{-k_t}=\eta_t$ for some $k_{t+1}\in\N$, it follows that $\frac{\eta_{t}}{\eta_{t+1}}\in\N_{\geq 1}$. Thus, $\aalpha^{(t+1)} = \frac{\eta_{t}}{\eta_{t+1}} \tilde\aalpha^{(t)}\in\N^n$ is such that $\xx^{(t+1)}= \eta_{t+1} \aalpha^{(t+1)}$, proving Claim 2.\\

    It remains to prove the convergence rates. To do so we rely on the following result.\\

    \noindent
    \textbf{Claim 3:} For all $t \in \N_{\ge t_0}$, it holds that
    \begin{equation}\label{eq.eta.slp}
        - \frac{\langle \nabla f(\xx^{(t)}), \dd^{(t)}_\text{PW} \rangle}{L} \geq \eta_t \geq \frac{\mu^\theta (f(\xx^{(t)}) - f^*)^{1-\theta}}{2L},
    \end{equation}
    which also implies that the  suboptimality gap is nonincreasing, that is, $f(\xx^{(t+1)}) \leq f(\xx^{(t)})$ for all $t \in \N_{\ge t_0}$.\\
    \noindent
    \textit{Proof of Claim 3:}
    The upper bound in~\eqref{eq.eta.slp} readily follows from the choice of $\eta_t$:
\[
        \eta_{t}\leq \gamma_{t} = - \frac{\langle \nabla f(\xx^{(t)}), \dd^{(t)}_\text{PW} \rangle}{L}.
\]
Furthermore, since $(\cC,f)$ has extended $L$-curvature it also follows that
\begin{align*}
f(\xx^{(t+1)}) &= f(\xx^{(t)}+ \eta_t \dd^{(t)}_\text{PW}) \\
&\le 
f(\xx^{(t)}) + \eta_t \ip{\nabla f(\xx^{(t)})}{\dd^{(t)}_\text{PW}} + \frac{L \eta_t^2}{2}\\
&\le f(\xx^{(t)}) - \frac{L\eta_t^2}{2} \\
& \le f(\xx^{(t)}).
\end{align*}   
Thus the suboptimality gap is nonincreasing.    
    
    We prove the lower bound in~\eqref{eq.eta.slp} by induction. The choice of $t_0$ and~\eqref{eq.in_face.decrease.face} imply that $$\eta_{t_0} \ge \frac{\gamma_{t_0}}{2} = - \frac{\langle \nabla f(\xx^{({t_0})}), \dd^{({t_0})}_\text{PW} \rangle}{2L} \geq \frac{\mu^\theta (f(\xx^{({t_0})}) - f^*)^{1-\theta}}{2L}.$$ 
    Suppose that the lower bound holds for $t\in\N_{\ge t_0}$. In case $\gamma_{t+1}\leq \eta_{t}$, Claim 3 follows from~\eqref{eq.in_face.decrease.face}:
\[
        \eta_{t+1} \geq \frac{\gamma_{t+1}}{2} = - \frac{\langle \nabla f(\xx^{(t+1)}), \dd^{(t+1)}_\text{PW} \rangle}{2L} \geq \frac{\mu^\theta (f(\xx^{(t+1)}) - f^*)^{1-\theta}}{2L}.
\]
    In case $\gamma_{t+1}\geq \eta_{t}$, Claim 3 follows from the induction hypothesis and  the nonincreasing suboptimality gap:
\[
        \eta_{t+1} = \eta_{t} \geq \frac{\mu^\theta (f(\xx^{(t)}) - f^*)^{1-\theta}}{2L} \geq \frac{\mu^\theta (f(\xx^{(t+1)}) - f^*)^{1-\theta}}{2L}.
\]

    Finally, we derive the convergence guarantee. For $t\in \{0,\dots,t_0\}$ we have $\eta_t = 1 \le - \langle \nabla f(\xx^{(t)}), \dd^{(t)}_{\text{PW}} \rangle$.  Thus the extended $L$-curvature of $(\cC,f)$ implies that
    \[
    f(\xx^{(t+1)}) \le  f(\xx^{(t)}) + \frac{\langle \nabla f(\xx^{(t)}), \dd^{(t)}_{\text{PW}} \rangle}{2} \le f(\xx^{(t)}) - \frac{f(\xx^{(t)})-f^*}{2}.
    \]
    Thus~\eqref{eq.linearregime.in_face_fw.slp} follows.

On the other hand, Claim 3, $L$-curvature, and~\eqref{eq.in_face.decrease.face} imply that for all $t\in\N_{\ge t_0}$ it holds that
    \begin{align*}
        f(\xx^{(t+1)}) & \leq f(\xx^{(t)}) +\eta_t \langle \nabla f(\xx^{(t)}), \dd^{(t)}_{\text{PW}} \rangle + \frac{L\eta_t^2}{2}             \\
                       & \leq f(\xx^{(t)}) +\eta_t \left( \langle \nabla f(\xx^{(t)}), \dd^{(t)}_{\text{PW}} \rangle + \frac{L\eta_t}{2}\right) \\
                       & \leq f(\xx^{(t)}) + \eta_t \frac{\langle \nabla f(\xx^{(t)}), \dd^{(t)}_{\text{PW}} \rangle}{2}                        \\
                       & \leq f(\xx^{(t)}) - \frac{\mu^{2\theta}(f(\xx^{(t)}) - f^*)^{2(1-\theta)}}{4L}.
    \end{align*}
    The theorem follows from repeated application of the above inequality when $\theta=1/2$ and from Lemma~\ref{lemma:borwein} when $\theta\in [0, 1/2)$.
\end{proof}

We conclude this section with the following result analogous to  Proposition~\ref{prop:int_aff_err},  Proposition~\ref{prop:aff_err.vertex}, and Proposition~\ref{prop:aff_err.face} but specialized to the Euclidean norm.
We rely on the following notation and terminology from~\cite{garber2016linear}.  Suppose $\cC\subseteq\R^n$ is a simplex-like polytope and $\emptyset \ne X \subseteq \cC$.   Let $\text{\rm card}(X)$  denote the cardinality of $X$, that is, the number of positive entries of the most dense point in $X$.

\begin{proposition}\label{prop:aff_err.face.simplex}
Let $\R^n$ be endowed with the Euclidean norm $\|\cdot\|_2$, let $\cC \subseteq \R^n$ be a polytope, and let $f\colon\cC\to\R$ be convex.
    Suppose that $(\cC,f)$ satisfies a $(\mu, \theta)$-Hölderian error bound with respect to $\|\cdot\|_2$ for some $\mu>0$ and $\theta \in(0,1/2]$. Then
    $(\cC, f)$ satisfies a $(\tilde \mu,\theta)$ error bound relative to $\f$ for 
    \[
    \tilde \mu = \max\left\{\frac{\mu}{\text{\rm card}(X^*)^{1/(2\theta)}},    \frac{\mu}{(n-\text{\rm card}(X^*))^{1/(2\theta)}} \right\}.
    \]
\end{proposition}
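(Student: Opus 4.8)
The plan is to deduce the statement from Proposition~\ref{prop:aff_err.face} together with a lower bound on the outer facial distance $\bar\Phi(F(X^*),\cC)$ that is specific to simplex-like polytopes and to the Euclidean norm. Since $\|\cdot\|_2$ is a norm and $\theta\in(0,1/2]$, Proposition~\ref{prop:aff_err.face} already tells us that $(\cC,f)$ satisfies a $\bigl(\mu\cdot\bar\Phi(F(X^*),\cC)^{1/\theta},\,\theta\bigr)$-error bound relative to $\f$; and an $(\mu',\theta)$-error bound relative to $\f$ trivially implies an $(\mu'',\theta)$-error bound relative to $\f$ for every $\mu''\le\mu'$. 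Hence it is enough to prove
\[
\bar\Phi(F(X^*),\cC)\ \ge\ \frac{1}{\sqrt{\min\{\,\text{\rm card}(X^*),\,n-\text{\rm card}(X^*)\,\}}},
\]
because then $\mu\cdot\bar\Phi(F(X^*),\cC)^{1/\theta}\ \ge\ \mu\cdot\bigl(\min\{\text{\rm card}(X^*),n-\text{\rm card}(X^*)\}\bigr)^{-1/(2\theta)}=\tilde\mu$.

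I would establish the displayed inequality in two steps. The first step is the identification $\text{\rm card}(X^*)=|\supp(F(X^*))|$, where $\supp(F)$ denotes the set of coordinates that are positive at some point of the face $F$. Indeed, for a simplex-like polytope $\cC=\{\xx\in\R^n\mid A\xx=\bb,\ \xx\ge\0\}$ the minimal face $F(X^*)$ equals $\{\zz\in\cC\mid \supp(\zz)\subseteq U\}$ with $U=\bigcup_{\xx^*\in X^*}\supp(\xx^*)$, so $\supp(F(X^*))=U$; on the other hand $X^*$ is convex, so its relative interior is nonempty and every point of $\relint(X^*)$ has support exactly $U$, whence $\text{\rm card}(X^*)=\max_{\xx^*\in X^*}|\supp(\xx^*)|=|U|$.

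The second step---the technical heart---is the purely geometric fact that, for a simplex-like polytope $\cC\subseteq\R^n$ with the Euclidean norm and any $F\in\faces(\cC)$,
\[
\bar\Phi(F,\cC)\ \ge\ \frac{1}{\sqrt{\min\{\,|\supp(F)|,\,n-|\supp(F)|\,\}}}.
\]
Take disjoint faces $G\in\faces(F)$ and $H\in\faces(\cC)$ and points $\xx\in G$, $\yy\in H$, so that $\supp(\xx)\subseteq\supp(G)\subseteq\supp(F)$ and $\supp(\yy)\subseteq\supp(H)$. The disjointness of $G$ and $H$ together with the hypothesis that every vertex of $\cC$ lies in $\{0,1\}^n$ forces $\xx-\yy$ to carry at least unit $\ell_1$-mass on a controlled coordinate set, and a Cauchy--Schwarz estimate on that set---performed on $\supp(F)$ when $|\supp(F)|\le n/2$ and on its complement when $|\supp(F)|>n/2$---gives the bound; the two regimes are exactly what produce the two terms inside the $\max$ in $\tilde\mu$. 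Combining the two steps with the monotonicity $\bar\Phi(F(\xx^*),\cC)\ge\bar\Phi(F(X^*),\cC)$ (valid since $F(\xx^*)\subseteq F(X^*)$), already used in Proposition~\ref{prop:aff_err.face}, yields the claim.

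I expect the main obstacle to be the second step: making the combinatorial argument precise for an arbitrary simplex-like polytope---rather than, say, the standard simplex, where $\bar\Phi(F,\cC)=\sqrt{|\supp(F)|^{-1}+(n-|\supp(F)|)^{-1}}$ can be computed directly---requires carefully exploiting the $\{0,1\}$-vertex structure of $\cC$ in the spirit of the geometric lemmas of Garber and Meshi~\cite{garber2016linear}.
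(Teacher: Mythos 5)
Your overall architecture is exactly the paper's: reduce via Proposition~\ref{prop:aff_err.face} to a lower bound on $\bar\Phi(F(X^*),\cC)$, identify $\mathrm{card}(X^*)$ with $|\supp(F(X^*))|$ (the paper phrases this as $|I_{F(X^*)}^c|=\mathrm{card}(X^*)$), and then invoke a facial-distance bound for simplex-like polytopes; the only difference is that the paper cites its Corollary~\ref{cor.bound.facialdist.std} for that last step while you propose to re-derive it. Your first step is correct, and so is half of the second step: for disjoint $G\in\faces(F)$, $H\in\faces(\cC)$ and $\xx\in G$, $\yy\in H$, every vertex of $G$ lies outside $H$ and hence has a coordinate equal to $1$ in $I_H\cap I_G^c$, so $\sum_{i\in I_H\cap I_G^c}(x_i-y_i)\ge 1$; since $I_H\cap I_G^c\subseteq I_G^c\subseteq I_F^c=\supp(F)$, Cauchy--Schwarz gives $\|\xx-\yy\|_2\ge 1/\sqrt{|\supp(F)|}$, which is the $\mu/\mathrm{card}(X^*)^{1/(2\theta)}$ term.

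The gap is the other half, and it is not merely a matter of "making the combinatorial argument precise": the inequality $\bar\Phi(F,\cC)\ge 1/\sqrt{\,n-|\supp(F)|\,}$ is false. The symmetric mass argument places unit $\ell_1$-mass on $I_G\cap I_H^c$, but for $G\in\faces(F)$ one has $I_G\supseteq I_F$, so this set is not contained in $I_F$ and the Cauchy--Schwarz estimate "on the complement of $\supp(F)$" does not go through. Concretely, take $\cC=\Delta_6$, $F=\conv(e_1,\dots,e_5)$, $G=\conv(e_1,e_2,e_3)$, $H=\conv(e_4,e_5)$: then $\dist(G,H)=\sqrt{1/3+1/2}=\sqrt{5/6}<1=1/\sqrt{n-|\supp(F)|}$. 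Moreover the defect propagates to the statement itself, not just to this proof route: for $f(\xx)=\tfrac12\|\xx-\cc\|_2^2$ on $\Delta_6$ with $\cc=(\tfrac15,\tfrac15,\tfrac15,\tfrac15,\tfrac15,0)$ one has $\mu=\tfrac12$, $\theta=\tfrac12$, $\mathrm{card}(X^*)=5$, hence $\tilde\mu=\tfrac12$; yet at $\xx=(0,0,0,\tfrac12,\tfrac12,0)$ one computes $\f(X^*,\xx)=\tfrac35$ while $\bigl((f(\xx)-f^*)/\tilde\mu\bigr)^{1/2}=\|\xx-\cc\|_2=\sqrt{3/10}<\tfrac35$. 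So only the first term of the $\max$, the Garber--Meshi-type bound $\mu/\mathrm{card}(X^*)^{1/(2\theta)}$, can be established by your argument (or the paper's); the same issue affects the $1/\|\ssig^{-1}_{I_F}\|^*$ terms of Corollary~\ref{cor.bound.facialdist.std}, whose proof is omitted in the paper.
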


The bound in Proposition~\ref{prop:aff_err.face.simplex} is in the same spirit as the bound stated as Lemma~2 in \cite{garber2016linear}.  A difference is that the bound in Proposition~\ref{prop:aff_err.face.simplex} is sharper when the most dense point in $X^*$  has more than $n/2$ non-zero components.  We defer the proof of 
Proposition~\ref{prop:aff_err.face.simplex} to Section~\ref{sec.facial_geometry}.

\section{Facial geometry}\label{sec.facial_geometry}

In this section, we develop some geometric properties of the inner and outer facial distances.  More precisely, we give proofs of Lemma~\ref{lemma.vertex.facial} and Lemma~\ref{lemma.face.facial} and give generic lower bounds on the inner and outer facial distances. In particular, we give a proof of Proposition~\ref{prop:aff_err.face.simplex} as an immediate consequence of more general bounds.

\subsection{Proofs of Lemma~\ref{lemma.vertex.facial} and Lemma~\ref{lemma.face.facial}}\label{sec.geometric_properties}

\begin{proof}{Proof of Lemma~\ref{lemma.vertex.facial}.}
Suppose that $\xx,\yy \in \cC$ with $\yy\ne\xx$ as otherwise \eqref{eq.vertex.facial} trivially holds.  Consider the following modified version of $\v(\yy,\xx)$:
              \begin{equation}\label{eq.tilde.g}
                  \tilde \v(\yy,\xx) := \max_{S\in\SS(\xx)} \min\{\gamma \geq 0 \mid \;  \yy-\xx = \gamma (\vv-\uu) \ \text{for some} \ \vv\in F(\yy)
                                                                                        \text{and} \ \uu\in\conv(S)\}.
              \end{equation}
              Suppose that $S,\vv,\uu$ attain the value $\gamma = \tilde \v(\yy,\xx)$ in~\eqref{eq.tilde.g} and $S$ is a minimal set with respect to inclusion.

              We claim that the construction of $\tilde \v(\yy,\xx)$ and the choice of $\uu,\vv$ imply that $\uu \in \conv(\vertices(\cC)\setminus F(\vv))$. To show this claim it suffices to show that  $S\cap \vertices(F(\vv)) = \emptyset$.  To do so, we proceed by contradiction. Observe that the minimality of $S$ implies that $\uu = \sum_{\ss\in S}\lambda_{\ss} \ss$ for some $\lambda_{\ss}>0, \ss\in S$ with $\sum_{\ss\in S}\lambda_{\ss}=1.$  If $S\cap \vertices(F(\vv)) \ne \emptyset$ then for some $\ss\in S\cap\vertices(F(\vv))$ and $\epsilon > 0$ sufficiently small we have both $\uu':=(1+\epsilon) \uu - \epsilon \ss\in \conv(S)$ because all $\lambda_{\ss} > 0$ and $\vv':=(1+\epsilon) \vv - \epsilon \ss\in F(\vv)$ because $\vv\in\relint(F(\vv))$.  Hence we get
              \[
                  \yy-\xx = \frac{\gamma}{1+\epsilon}(\vv'-\uu')
              \]
              with $\gamma/(1+\epsilon) < \gamma$. This contradicts the choice of $S,\vv,\uu$.  Thus, $S\cap \vertices(F(\vv))=\emptyset$.
              To finish, observe that $\gamma = \tilde \v(\yy,\xx) \ge \v(\yy,\xx)$ and thus
\begin{align*}
\|\yy-\xx\| &= \gamma \|\vv-\uu\| \\
&\ge  \v(\yy,\xx) \cdot \dist(F(\vv),\conv(\vertices(\cC)\setminus F(\vv)))\\
&\ge\v(\yy,\xx) \cdot \Phi(F(\yy),\cC).
\end{align*}              
              The second to last step holds because $\emptyset\ne F(\vv) \in \faces(F(\yy))$ and $\uu\in\conv(\vertices(\cC)\setminus F(\vv))$.
              Since this holds for any $\xx,\yy \in \cC$ with $\yy\ne \xx$, the bound~\eqref{eq.vertex.facial} follows.
\end{proof}

\begin{proof}{Proof of Lemma~\ref{lemma.face.facial}.}
This proof is similar, but simpler, to that of Lemma~\ref{lemma.vertex.facial}.
              Suppose that $\xx,\yy \in \cC$ with $\yy\ne \xx$ 
as otherwise~\eqref{eq.face.facial} trivially holds.  Consider the following modified version of $\f(\yy,\xx)$:
              \begin{equation}\label{eq.tilde.f}
                  \bar \f(\yy,\xx) := \min\{\gamma \geq 0 \mid \;  \yy-\xx = \gamma (\vv-\uu) \ \text{for some} \ \vv\in F(\yy)   \ \text{and} \ \uu\in F(\xx)\}.
              \end{equation}              
              and suppose $\vv,\uu$ attain the value $\gamma = \bar \f(\yy,\xx)$ 
              in~\eqref{eq.tilde.f}, that is, $\vv\in F(\yy),\uu\in F(\xx)$ and
$\yy-\xx = \gamma (\vv-\uu).$
              The construction of $\bar \f(\yy,\xx)$ and the choice of $\gamma,\uu,\vv$ imply that $F(\uu) \cap F(\vv) = \emptyset$. To finish, observe that $\gamma =\bar \f(\yy,\xx)\ge  \f(\yy,\xx)$ and thus
\begin{align*}
\|\yy-\xx\| & = \gamma\|\vv-\uu\| \\
&\ge \f(\yy,\xx) \cdot \dist(F(\vv), F(\uu))\\
&\ge \f(\yy,\xx) \cdot \bar\Phi(F(\yy),\cC).
              \end{align*}
              The last step holds because both $F(\vv) \in \faces(F(\yy))$ and $F(\uu)\in \faces(F(\xx)) \subseteq \faces(\cC)$ are evidently nonempty, and $F(\uu) \cap F(\vv)=\emptyset$.
              Since this holds for any $\xx,\yy \in \cC$ with $\yy\ne \xx$, the bound~\eqref{eq.face.facial} follows. 
\end{proof}

\subsection{Lower bound on the inner and outer facial distances}\label{sec.lower_bound_fd}

We next provide lower bounds on the inner and outer facial distances of a polytope in terms of a polyhedral description of the polytope.  The bounds are stated in Corollary~\ref{cor.bound.facialdist} and Corollary~\ref{cor.bound.facialdist.std} below.  Throughout this section, we assume that $\cC\subseteq \R^n$ is a polytope with the following polyhedral description
\begin{equation}\label{eq.polyhedral}
    \cC = \{\xx\in\R^n\mid  A\xx = \bb, D\xx \ge \ee\},
\end{equation}
where $A\in\R^{m\times n}$, $\bb\in\R^m$, $D\in \R^{k\times n}$, and $\ee\in\R^k$. 
To ease notation, throughout the remaining of this section we will write $[k]$ as shorthand for the set $\{1,2,\dots,k\}$.

We shall assume that the description~\eqref{eq.polyhedral} satisfies the following: for each $i\in [k]$, there exists an $\xx\in \cC$ such that $\ip{\dd_i}{\xx} > e_i$. This can be assumed without loss of generality as otherwise the inequality $\ip{\dd_i}{\xx} \ge e_i$ in~\eqref{eq.polyhedral} could be replaced with $\ip{\dd_i}{\xx} = e_i$.
We also assume without loss of generality that $\min_{\xx\in \cC}\ip{\dd_i}{\xx} = e_i$, as otherwise the inequality $\ip{\dd_i}{\xx} \ge e_i$ is redundant and can be dropped.

For $i\in [k]$, let $\dd_i\in \R^n$ denote the $i$-th row of $D$ so that for all $\xx\in \R^n$
\[
    (D\xx)_i = \ip{\dd_i}{\xx}.
\]
Similarly, for $I\subseteq[k]$, let $D_I$ denote the $|I|\times n$ submatrix of $D$ obtained by selecting the rows of $D$ indexed by $I$.  Likewise,
for $I\subseteq[k]$, let $\ee_I$ denote the $|I|$ subvector of $\ee$ obtained by selecting the entries of $\ee$ indexed by $I$.
 
  We also rely on the following notation.

\begin{definition}\label{def.i_f_c_f}
    Suppose that $\cC\subseteq\R^n$ is a polytope with a polyhedral description~\eqref{eq.polyhedral}.  For each $i\in [k]$, let
    \[
        \sigma_i:=\min\{\ip{\dd_i}{\vv}-e_i \mid \vv\in \vertices(\cC) \ \text{and} \ \ip{\dd_i}{\vv}>e_i\}.
    \]
    Suppose that $F \in \faces(\cC)$ is such that $F \subsetneq \cC$.
    Let
\[
I_F:= \{i\in [k] \mid \ip{\dd_i}{\xx} = e_i \ \text{for all} \ \xx\in F\}.
\]
Observe that $I_F\ne \emptyset$ because $F \in \faces(\cC)$ and $F\ne \cC$.    Let $D_{I_F}$ denote the $|I_F|\times n$ matrix whose rows are the rows from $D$ indexed by $I_F$ and let 
    \[\mathbb{I}(F):= \{I\subseteq I_F \mid D_I  \ \text{is full row rank and } I \text{ is maximal}\}.\]
    In other words, $\mathbb{I}(F)$ is the collection of all maximal sets of  rows  from $D_{I_F}$ that are linearly independent.
\end{definition}

\begin{lemma}[Minimal face for polytopes of the form \eqref{eq.polyhedral}]\label{lemma.face}
    Let  $\cC\subseteq\R^n$ be a polytope of the form \eqref{eq.polyhedral} and let $\emptyset\neq F\subsetneq\cC$ be a face of $\cC$. Then for all $I\in \mathbb{I}(F)$ it holds that
\[
        F =  \cC \cap \{\xx \mid D_I \xx = \ee_I\} = \cC \cap \{\xx \mid D_{I_F} \xx = \ee_{I_F}\}.
\]
\end{lemma}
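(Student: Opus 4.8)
The plan is to prove the chain of equalities $F = \cC \cap \{\xx \mid D_I\xx = \ee_I\} = \cC \cap \{\xx \mid D_{I_F}\xx = \ee_{I_F}\}$ for an arbitrary $I \in \mathbb{I}(F)$. First I would recall the standard characterization of a nonempty proper face $F$ of a polyhedron given by inequalities: $F$ is exactly the set of points of $\cC$ for which a certain subset of the inequality constraints is active, and in fact $F = \cC \cap \{\xx \mid \ip{\dd_i}{\xx} = e_i \text{ for all } i \in I_F\}$ where $I_F$ is the index set of inequalities that are active on all of $F$. (This is essentially the definition of $I_F$ combined with the fact that $F$, being a face, is cut out by setting some of the defining inequalities to equalities; the assumption that each inequality is irredundant and attains equality on $\cC$ is what makes $I_F$ behave correctly.) This immediately gives the second equality, $\cC \cap \{\xx \mid D_{I_F}\xx = \ee_{I_F}\} = F$, or at least reduces it to this standard fact.

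\medskip

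For the first equality, the inclusion $F \subseteq \cC \cap \{\xx \mid D_I\xx = \ee_I\}$ is trivial since $I \subseteq I_F$ and every point of $F$ satisfies $\ip{\dd_i}{\xx} = e_i$ for all $i \in I_F$. The reverse inclusion is the substantive point: I must show that forcing only the $|I|$ linearly independent constraints indexed by $I$ to hold with equality already forces \emph{all} the constraints in $I_F$ to hold with equality (for points of $\cC$). The key observation is that $I \in \mathbb{I}(F)$ means $D_I$ has full row rank and $I$ is a maximal such subset of $I_F$; hence for every $j \in I_F$, the row $\dd_j$ lies in the row span of $D_I$, i.e. $\dd_j = \sum_{i \in I} c_i \dd_i$ for some scalars $c_i$. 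I would then argue that the affine functional $\xx \mapsto \ip{\dd_j}{\xx} - e_j$ is a linear combination of the functionals $\xx \mapsto \ip{\dd_i}{\xx} - e_i$, $i \in I$ — the subtlety being that we need the \emph{constant terms} $e_j$ to also combine correctly, not just the linear parts $\dd_j$. This is where I would invoke that $F \neq \emptyset$: pick any $\xx_0 \in F$; then $\ip{\dd_i}{\xx_0} = e_i$ for all $i \in I$ and $\ip{\dd_j}{\xx_0} = e_j$, so $e_j = \ip{\dd_j}{\xx_0} = \sum_i c_i \ip{\dd_i}{\xx_0} = \sum_i c_i e_i$. Therefore for any $\xx$ with $D_I\xx = \ee_I$ we get $\ip{\dd_j}{\xx} = \sum_i c_i \ip{\dd_i}{\xx} = \sum_i c_i e_i = e_j$, so $j$-th constraint is active. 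Since $j \in I_F$ was arbitrary, $D_I\xx = \ee_I$ implies $D_{I_F}\xx = \ee_{I_F}$, and combined with $\xx \in \cC$ this puts $\xx$ in $F$ by the second equality.

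\medskip

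I expect the main obstacle to be stating cleanly the standard fact that $F = \cC \cap \{\xx \mid D_{I_F}\xx = \ee_{I_F}\}$ — that is, that a nonempty proper face is precisely recovered by activating the inequalities active on it. This requires the nondegeneracy-style normalization assumptions set up just before the lemma (each inequality is attained as an equality somewhere on $\cC$, none is redundant), and one should verify $I_F \neq \emptyset$ (which the statement already notes) and that no point of $\cC \setminus F$ satisfies all constraints in $I_F$ with equality. The latter follows because $F$ is a face: it equals $\cC \cap H$ for some supporting hyperplane $H$, the active-inequality index set on $\relint(F)$ is exactly $I_F$, and any $\xx \in \cC$ with all of $I_F$ active lies on the corresponding supporting hyperplane hence in $F$. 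Once that is in hand, the rest is the elementary linear-algebra argument above, and the role of $I \in \mathbb{I}(F)$ (maximal independent subset) is exactly to guarantee $\rowspan(D_I) = \rowspan(D_{I_F})$.
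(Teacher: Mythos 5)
Your proof is correct and follows essentially the same route as the paper's, which simply asserts both key facts (that the face is recovered by activating the constraints in $I_F$, and that $\cC\cap\{\xx\mid D_I\xx=\ee_I\}=\cC\cap\{\xx\mid D_{I_F}\xx=\ee_{I_F}\}$) as immediate from the construction of $I_F$ and $\mathbb{I}(F)$. You in fact supply more detail than the paper does, in particular the row-span argument combined with a point $\xx_0\in F$ to match the constant terms $e_j$.
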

\begin{proof}{Proof.}
The construction of $I_F$ and $\mathbb{I}(F)$ immediately implies that 
$
        F \subseteq  \cC \cap \{\xx \mid D_I \xx = \ee_I\} = \cC \cap \{\xx \mid D_{I_F} \xx = \ee_{I_F}\}$
        for all $I\in \mathbb{I}(F)$.
For the reverse inclusion, again the construction of $I_F$ implies that if 
$\xx\in \cC \setminus F$ then $\ip{\dd_i}{\xx} > e_i$ for some $i\in I_F$.  Therefore
$
\cC\setminus F \subseteq \cC \setminus \{\xx \mid D_{I_F} \xx = \ee_{I_F}\}
$ and consequently $\cC \cap \{\xx \mid D_{I_F} \xx = \ee_{I_F}\} \subseteq \cC\cap F = F$.
\end{proof}

The following result provides the key components to bound the inner and outer facial distances that we subsequently state in Corollary~\ref{cor.bound.facialdist} and Corollary~\ref{cor.bound.facialdist.std}.

Recall that if $\R^n$ is endowed with a norm $\|\cdot\|$, then $\|\cdot\|^*$ denotes the dual norm defined via~\eqref{eq.dual.norm}.

\begin{theorem}[Lower bound on facial distance]\label{thm:difd}
    Let $\R^n$ be endowed with a norm $\|\cdot\|$ and suppose that $\cC\subseteq\R^n$ is a polytope with a description of the form~\eqref{eq.polyhedral}.
    \begin{enumerate}
        \item[(a)] If $F \in \faces(\cC)$ and $F \subsetneq \cC$ then
              \begin{equation}\label{eq.lower.bound.facial}
                  \dist(F,\conv(\vertices(\cC)\setminus F)) \geq
                  \max_{I\in \mathbb{I}(F)}
                  \frac{1}{\|\sum_{i\in I} \dd_i/\sigma_{i}\|^*}.
              \end{equation}
        \item[(b)] If $F,G \in \faces(\cC)$ and $F\cap G = \emptyset$ then
              \begin{equation}\label{eq.lower.bound.outer.facial}
                  \dist(F,G) \geq \max\left\{
                  \max_{I\in \mathbb{I}(F)}
                  \frac{1}{\|\sum_{i\in I\cap I_G^c} \dd_i/\sigma_{i}\|^*},
                  \max_{I\in \mathbb{I}(G)}
                  \frac{1}{\|\sum_{i\in I\cap I_F^c} \dd_i/\sigma_{i}\|^*}\right\}.
              \end{equation}
            \end{enumerate}
\end{theorem}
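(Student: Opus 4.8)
The plan is to exhibit, for each $I\in\mathbb{I}(F)$, an explicit linear functional built from the scaled rows $\dd_i/\sigma_i$ that separates $F$ from the relevant set, and then to read off the distance bound from the defining inequality of the dual norm. The two parts are almost identical, so I describe part (a) in detail and indicate the modification needed for part (b).

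\emph{Part (a).} Fix $I\in\mathbb{I}(F)$, and set $\uu:=\sum_{i\in I}\dd_i/\sigma_i$ and $c:=\sum_{i\in I}e_i/\sigma_i$. By Lemma~\ref{lemma.face}, $F=\cC\cap\{\xx\mid D_I\xx=\ee_I\}$, so every $\xx\in F$ satisfies $\ip{\uu}{\xx}=c$. I claim $\ip{\uu}{\vv}\ge c+1$ for every vertex $\vv\in\vertices(\cC)\setminus F$: since $\vv\in\cC$ but $D_I\vv\ne\ee_I$, there is $i_0\in I$ with $\ip{\dd_{i_0}}{\vv}>e_{i_0}$, whence $\ip{\dd_{i_0}}{\vv}-e_{i_0}\ge\sigma_{i_0}$ by the definition of $\sigma_{i_0}$, while every other summand $(\ip{\dd_i}{\vv}-e_i)/\sigma_i$ is nonnegative because $\vv\in\cC$; summing over $i\in I$ gives $\ip{\uu}{\vv}-c\ge 1$. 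Passing to convex combinations, $\ip{\uu}{\yy}\ge c+1$ for all $\yy\in\conv(\vertices(\cC)\setminus F)$. Hence for any $\xx\in F$ and any such $\yy$,
\[
1\le\ip{\uu}{\yy-\xx}\le\|\uu\|^*\cdot\|\yy-\xx\|,
\]
so $\|\yy-\xx\|\ge 1/\|\uu\|^*$; minimizing over $\xx,\yy$ and maximizing over $I\in\mathbb{I}(F)$ yields~\eqref{eq.lower.bound.facial}.

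\emph{Part (b).} By symmetry it suffices to show $\dist(F,G)\ge 1/\|\sum_{i\in I\cap I_G^c}\dd_i/\sigma_i\|^*$ for each $I\in\mathbb{I}(F)$. Fix such an $I$, set $I':=I\cap I_G^c$, $\uu:=\sum_{i\in I'}\dd_i/\sigma_i$, $c:=\sum_{i\in I'}e_i/\sigma_i$; since $I'\subseteq I\subseteq I_F$, again $\ip{\uu}{\xx}=c$ for all $\xx\in F$. Now take a vertex $\ww$ of $G$. By Lemma~\ref{lemma.face}, $G=\cC\cap\{\xx\mid D_{I_G}\xx=\ee_{I_G}\}$, so $\ip{\dd_j}{\ww}=e_j$ for every $j\in I_G$; and $\ww\notin F$ because $F\cap G=\emptyset$, so the strictly violated index $i_0\in I$ with $\ip{\dd_{i_0}}{\ww}>e_{i_0}$ (which exists exactly as in part (a)) must lie in $I'=I\cap I_G^c$. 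As before, the remaining summands over $i\in I'$ are nonnegative, so $\ip{\uu}{\ww}-c\ge 1$, and by convexity $\ip{\uu}{\yy}\ge c+1$ for all $\yy\in G$. The dual-norm estimate of part (a) then gives $\dist(F,G)\ge 1/\|\uu\|^*$; taking the maximum over $I\in\mathbb{I}(F)$ and adding the symmetric bound with the roles of $F$ and $G$ interchanged gives~\eqref{eq.lower.bound.outer.facial}.

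The one point that will require care is the bookkeeping in part (b): confirming that for a vertex of $G$ the strictly violated constraint among those indexed by $I$ can always be chosen outside $I_G$ --- this is exactly where Lemma~\ref{lemma.face} together with the vertex (not merely relative-interior) structure of $G$ enters --- and, relatedly, checking that the index sets $I$ and $I\cap I_G^c$ in the bounds are nonempty so that the dual norms appearing in the denominators are positive. Both nonemptiness statements come for free from the same argument: the existence of a strictly violated index shows $I$, respectively $I'$, is nonempty whenever $\vertices(\cC)\setminus F$, respectively $\vertices(G)$, is nonempty, and the latter holds because $F\subsetneq\cC$ is a proper face and $G$ a nonempty face of $\cC$ (if all vertices of $\cC$ lay in $F$ then $\cC=\conv(\vertices(\cC))\subseteq F$). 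The rest is a routine combination of the definition of $\sigma_i$, feasibility in $\cC$, and the inequality $\ip{\uu}{\zz}\le\|\uu\|^*\|\zz\|$.
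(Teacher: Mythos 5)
Your proposal is correct and follows essentially the same route as the paper's proof: use Lemma~\ref{lemma.face} to locate, at each vertex outside $F$ (resp.\ each vertex of $G$), a constraint indexed by $I$ (resp.\ $I\cap I_G^c$) violated by at least $\sigma_{i}$, sum the nonnegative scaled slacks to get a separation of at least $1$, and conclude via the dual-norm inequality. The extra care you flag about nonemptiness of $I$ and $I\cap I_G^c$ is a valid observation that the paper leaves implicit, but it does not change the argument.
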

\begin{proof}{Proof.}
    \begin{enumerate}
        \item[(a)]
              Let $G:=\conv(\vertices(\cC)\setminus F)$ and suppose $I\in \mathbb{I}(F)$.  Lemma~\ref{lemma.face} implies that for any vertex $\uu\in\vertices(G)$ there exists $i\in I$ such that  $\ip{\dd_{i}}{\uu} - e_i\ge \sigma_{i}$ or equivalently $(\ip{\dd_{i}}{\uu} - e_i)/\sigma_{i}\ge 1$. 
Furthermore, $\ip{\dd_{i}}{\uu} - e_i\ge 0$ for all $i\in I$ and $\uu\in\vertices(G)$ since $ \vertices(G)\subseteq \cC$.  Hence for each $\uu\in\vertices(G)$ it holds that
\begin{equation}\label{eq.gap}
              \sum_{i\in I} (\ip{\dd_i}{\uu} - e_i)/\sigma_{i} \ge 1.
\end{equation}
              Now suppose $\xx\in F$ and $\yy\in G$. Since  $\xx\in F$, it follows that  $\ip{\dd_i}{\xx} = e_i$ for all $i\in I$ and so
              \[
              \sum_{i\in I} \ip{\dd_i/\sigma_{i}}{\yy - \xx} = \sum_{i\in I} (\ip{\dd_i}{\yy} - e_i)/\sigma_{i}.
              \]
Since $\yy$ is a convex combination of vertices of $G$, inequality~\eqref{eq.gap} further implies that  
\[\sum_{i\in I} \ip{\dd_i/\sigma_{i}}{\yy - \xx} = \sum_{i\in I} (\ip{\dd_i}{\yy} - e_i)/\sigma_{i} \ge 1.\]
           Thus, by Cauchy-Schwarz inequality,
\[
                  \|\yy-\xx\| \geq \frac{1}{\|\sum_{i\in I} \dd_i/\sigma_{i}\|^*}.
\]
              Since this holds for all $I\in \mathbb{I}(F)$ and all $\xx\in F$ and $\yy\in G$,~\eqref{eq.lower.bound.facial} follows.
        \item[(b)] Once again, this follows via a similar but simpler argument to that used in part (a).  Suppose that $F,G \in \faces(\cC)$ and $F\cap G = \emptyset$.  By symmetry, to prove~\eqref{eq.lower.bound.outer.facial} it suffices to show that for $I\in \mathbb{I}(F)$
              \begin{equation}\label{eq.lower.bound.facial.equiv}
                  \dist(F,G) \ge \frac{1}{\|\sum_{i\in I\cap I_G^c} \dd_i/\sigma_{i}\|^*}.
              \end{equation}
              To that end, observe that Lemma~\ref{lemma.face} implies that for each $\uu \in \vertices(G)$ there exists $i \in I \cap I_G^c$ such that
              \[
                  \ip{\dd_i}{\uu} - e_i \ge \sigma_{i}.
              \]
              It thus follows that for all $\yy\in G$ and $\xx\in F$
              \[\sum_{i\in I\cap I_G^c} \ip{\dd_i/\sigma_{i}}{\yy - \xx} = \sum_{i\in I\cap I_G^c} (\ip{\dd_i}{\yy} - e_i)/\sigma_{i} \ge 1.\]
           Thus, by Cauchy-Schwarz inequality,
\[
                  \|\yy-\xx\| \geq \frac{1}{\|\sum_{i\in I\cap I_G^c} \dd_i/\sigma_{i}\|^*}.
\]
              Since this holds for all $I\in \mathbb{I}(F)$ and all $\xx\in F$ and $\yy\in G$,~\eqref{eq.lower.bound.facial.equiv} follows.
    \end{enumerate}
\end{proof}

\begin{corollary}\label{cor.bound.facialdist} Let $\R^n$ be endowed with a norm $\|\cdot\|$, let $\cC\subseteq\R^n$ be a polytope of the form~\eqref{eq.polyhedral},
    and let $F \in \faces(\cC)$. Then
    \[
       \bar \Phi(F,\cC) \ge \Phi(F,\cC) \ge \min_{G\in\faces(F)\atop  G\ne \cC} \max_{I\in \mathbb{I}(G)}
        \frac{1}{\|\sum_{i\in I} \dd_i/\sigma_{i}\|^*}
    \]
\end{corollary}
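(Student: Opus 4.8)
The first inequality, $\bar\Phi(F,\cC)\ge\Phi(F,\cC)$, is not new: it is among the elementary facts recorded immediately after Definition~\ref{def.facial_distance}, valid for every $F\in\faces(\cC)$. So the plan is to prove only the second inequality, and for that I would simply invoke Theorem~\ref{thm:difd}(a) inside the minimum defining $\Phi(F,\cC)$.

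Concretely, fix an arbitrary $G\in\faces(F)$ with $G\subsetneq\cC$ — exactly the faces appearing in Definition~\ref{def.facial_distance} — and first note that $G\in\faces(\cC)$, since a face of a face is a face and $F\in\faces(\cC)$. Moreover $G\subsetneq\cC$ because $G\subseteq F\subseteq\cC$ and $G\ne\cC$; in particular, for faces $G$ of $F$ the conditions ``$G\subsetneq\cC$'' and ``$G\ne\cC$'' coincide. With $G$ now known to be a proper face of $\cC$, Theorem~\ref{thm:difd}(a) applies verbatim and gives
\[
\dist\bigl(G,\conv(\vertices(\cC)\setminus G)\bigr)\ \ge\ \max_{I\in\mathbb{I}(G)}\frac{1}{\bigl\|\sum_{i\in I}\dd_i/\sigma_i\bigr\|^*}.
\]
Taking the minimum of both sides over all $G\in\faces(F)$ with $G\subsetneq\cC$ and unwinding Definition~\ref{def.facial_distance} then yields
\[
\Phi(F,\cC)\ \ge\ \min_{\substack{G\in\faces(F)\\ G\ne\cC}}\ \max_{I\in\mathbb{I}(G)}\frac{1}{\bigl\|\sum_{i\in I}\dd_i/\sigma_i\bigr\|^*},
\]
and combining with the already-known first inequality finishes the corollary.

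There is no genuine obstacle here: the corollary is a direct repackaging of Theorem~\ref{thm:difd}(a), whose proof carries all the geometric weight — identifying a proper face of a polytope of the form~\eqref{eq.polyhedral} with its set of active inequalities via Lemma~\ref{lemma.face}, and then a Cauchy--Schwarz/dual-norm estimate on the separating functional $\sum_{i\in I}\dd_i/\sigma_i$. The only points needing any care are the bookkeeping ones flagged above (transitivity of the face relation and the harmless match between ``$\subsetneq\cC$'' and ``$\ne\cC$'' in the two displayed index sets), together with the implicit fact, also used in Theorem~\ref{thm:difd}, that $\mathbb{I}(G)\ne\emptyset$ for every proper face $G$, so that the inner maximum is taken over a nonempty set.
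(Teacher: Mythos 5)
Your proof is correct and matches the paper's (implicit) argument: the paper states this corollary without proof as an immediate consequence of Theorem~\ref{thm:difd}(a), obtained exactly as you describe by applying that bound to each $G\in\faces(F)$ with $G\subsetneq\cC$ and taking the minimum, with the first inequality already recorded after Definition~\ref{def.facial_distance}. Your bookkeeping remarks (faces of faces are faces, the equivalence of ``$G\subsetneq\cC$'' and ``$G\ne\cC$'', and $\mathbb{I}(G)\ne\emptyset$) are all sound.
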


For polytopes with a standard form description, that is, when $D = I$ and $\ee = \mathbf{0}$, the bounds in Corollary~\ref{cor.bound.facialdist} can be simplified and sharpened as in Corollary~\ref{cor.bound.facialdist.std} below.  The statement of Corollary~\ref{cor.bound.facialdist.std} relies on the following terminology and notation.  

Let $\ssig^{-1}\in \R^k$ denote the vector with entries $1/\sigma_i, \; i\in [k]$. For $I\subseteq[k]$, let $\ssig^{-1}_I$ denote the $|I|$ subvector of $\ssig^{-1}$ obtained by selecting the entries of $\ssig^{-1}$ indexed by $I$.  We shall write $\1$ in lieu of $\ssig^{-1}$ in the special case when $\sigma_i=1$ for all $i\in [k]$.  We shall say that a norm $\|\cdot\|$ on $\R^n$ satisfies the {\em compatibility assumption} if $\|\xx\| \le \|\yy\|$ whenever $|\xx| \le |\yy|$ componentwise.  Observe that many popular norms on $\R^n$, including all $\ell_p$-norms for $1\le p\le\infty$, satisfy the compatibility assumption.

\begin{corollary}\label{cor.bound.facialdist.std} Let $\R^n$ be endowed with a norm $\|\cdot\|$ such that $\|\cdot\|^*$ 
satisfies the compatibility assumption, let $\cC\subseteq\R^n$ be a polytope of the form  $\cC = \{\xx \in \R^n \mid  A\xx = \bb, \xx \ge \mathbf{0}\}$. 
Then for all $F \in \faces(\cC)$
    \[
        \Phi(F,\cC) \ge 
        \frac{1}{\|\ssig^{-1}_{I_F}\|^*}
    \]
    and
    \[
        \bar \Phi(F,\cC) \ge \max\left\{
 \frac{1}{\|\ssig^{-1}_{I_F}\|^*}, \frac{1}{\|\ssig^{-1}_{I_{F}^c}\|^*}\right\}
    \]
        where $I_F = \{i\in[n]\colon x_i=0  \ \text{for all } \xx\in F\}$.
        
        In particular, if $\cC$ is a simplex-like polytope then for all $F \in \faces(\cC)$
        \[
        \Phi(F,\cC) \ge 
        \frac{1}{\|\1_{I_F}\|^*} \; \text{ and } \; 
        \bar \Phi(F,\cC) \ge \max\left\{
 \frac{1}{\|\1_{I_F}\|^*}, \frac{1}{\|\1_{I_F^c}\|^*}\right\}.
        \]
\end{corollary}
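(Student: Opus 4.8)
The plan is to obtain Corollary~\ref{cor.bound.facialdist.std} as a specialization of Theorem~\ref{thm:difd} (equivalently, of Corollary~\ref{cor.bound.facialdist}) to the standard-form description, in which $D=I_n$, $\ee=\0$, $k=n$, and $\dd_i=e_i$, so that $(D\xx)_i=x_i$ and $\sigma_i=\min\{v_i\mid \vv\in\vertices(\cC),\,v_i>0\}$. First I would record the structural simplifications that occur here. Since every set of rows of $I_n$ is linearly independent, for each face $F\subsetneq\cC$ the set $I_F=\{i\mid x_i=0\text{ for all }\xx\in F\}$ is itself the unique maximal linearly independent subset of $I_F$, so $\mathbb{I}(F)=\{I_F\}$ and the maxima in Theorem~\ref{thm:difd} collapse to a single term; moreover $\sum_{i\in I}\dd_i/\sigma_i$ is the vector of $\R^n$ with $i$-th entry $1/\sigma_i$ for $i\in I$ and $0$ otherwise, which I write $\ssig^{-1}_I$. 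Because all $\sigma_i>0$ and $\|\cdot\|^*$ satisfies the compatibility assumption, $I\subseteq J$ implies $\|\ssig^{-1}_I\|^*\le\|\ssig^{-1}_J\|^*$; this monotonicity is the engine of the argument. Finally, if $\cC$ is simplex-like then $v_i\in\{0,1\}$ forces $\sigma_i=1$, hence $\ssig^{-1}=\1$, so the simplex-like displays will follow from the general ones by substituting $\sigma_i\equiv 1$.

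For the inner bound, fix $F\in\faces(\cC)$ with $F\subsetneq\cC$ and bound each term in the definition of $\Phi(F,\cC)$. Applying Theorem~\ref{thm:difd}(a) to $F$ itself gives $\dist(F,\conv(\vertices(\cC)\setminus F))\ge 1/\|\ssig^{-1}_{I_F}\|^*$. For a sub-face $G\in\faces(F)$ with $G\ne\cC$ one has $G\subseteq F$, hence $I_F\subseteq I_G$, and $x_i=0$ on $G$ for every $i\in I_F$; using Lemma~\ref{lemma.face} to guarantee that a suitable nonnegative combination of the coordinate functionals indexed by $I_F$ separates $G$ from $\conv(\vertices(\cC)\setminus G)$ with unit gap, and bounding its dual norm by $\|\ssig^{-1}_{I_F}\|^*$ via the compatibility assumption, yields $\dist(G,\conv(\vertices(\cC)\setminus G))\ge 1/\|\ssig^{-1}_{I_F}\|^*$. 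Taking the minimum over all such $G$ gives $\Phi(F,\cC)\ge 1/\|\ssig^{-1}_{I_F}\|^*$, and together with the standing inequality $\bar\Phi(F,\cC)\ge\Phi(F,\cC)$ this delivers one of the two quantities inside the maximum in the bound for $\bar\Phi(F,\cC)$.

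For the remaining quantity $1/\|\ssig^{-1}_{I_F^c}\|^*$ of the outer bound I would invoke Theorem~\ref{thm:difd}(b). Given admissible disjoint faces $G\in\faces(F)$ and $H\in\faces(\cC)$, the second term in~\eqref{eq.lower.bound.outer.facial} (with $\mathbb{I}(H)=\{I_H\}$) gives $\dist(G,H)\ge 1/\|\ssig^{-1}_{I_H\cap I_G^c}\|^*$; since $I_F\subseteq I_G$ we have $I_H\cap I_G^c\subseteq I_G^c\subseteq I_F^c$, so monotonicity of $\|\cdot\|^*$ upgrades this to $\dist(G,H)\ge 1/\|\ssig^{-1}_{I_F^c}\|^*$. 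Minimizing over all admissible pairs $(G,H)$ yields $\bar\Phi(F,\cC)\ge 1/\|\ssig^{-1}_{I_F^c}\|^*$, which completes the maximum, and substituting $\ssig^{-1}=\1$ then specializes both displays to the simplex-like case (from which Proposition~\ref{prop:aff_err.face.simplex} follows by combining with Lemma~\ref{lemma.face.facial} and the Euclidean identity $\|\1_I\|_2=\sqrt{|I|}$).

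I expect the main obstacle to be the inner bound for proper sub-faces $G\subsetneq F$: unlike the case $G=F$, one cannot simply quote Theorem~\ref{thm:difd}(a) with $I_G$, since that only produces the weaker estimate $1/\|\ssig^{-1}_{I_G}\|^*\le 1/\|\ssig^{-1}_{I_F}\|^*$. The delicate point is therefore to construct a separating functional supported on the smaller index set $I_F$ (or a suitable convex combination of the $I_G$-functional and the complementary $I_G^c$-functional), and then to verify that the compatibility assumption on $\|\cdot\|^*$ keeps its dual norm at most $\|\ssig^{-1}_{I_F}\|^*$; everything else is bookkeeping with the index-set inclusions $I_F\subseteq I_G$ and $I_G^c\subseteq I_F^c$ and the monotonicity of $\|\cdot\|^*$.
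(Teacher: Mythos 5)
Your reductions of Theorem~\ref{thm:difd} to the standard-form setting are all correct: $\mathbb{I}(G)=\{I_G\}$ since every subset of rows of the identity is independent, $\sum_{i\in I}\dd_i/\sigma_i$ is the vector $\ssig^{-1}_I$, the compatibility assumption gives monotonicity of $I\mapsto\|\ssig^{-1}_I\|^*$, and $\sigma_i\equiv 1$ for simplex-like polytopes. Your derivation of the term $1/\|\ssig^{-1}_{I_F^c}\|^*$ in the bound for $\bar\Phi(F,\cC)$ from part (b) via $I_H\cap I_G^c\subseteq I_G^c\subseteq I_F^c$ is also sound. The gap is exactly where you suspected it, and it cannot be closed in the way you propose. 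For a face $G\in\faces(F)$ with $G\ne F$, no linear functional supported on $I_F$ can separate $G$ from $\conv(\vertices(\cC)\setminus G)$ with positive gap: every point of $F$ has $x_i=0$ for all $i\in I_F$, so such a functional vanishes identically on $F$, in particular on the vertices of $F$ lying outside $G$ --- and those vertices belong to $\vertices(\cC)\setminus G$. The same obstruction defeats a convex combination with the $I_G^c$-functional, which does not vanish on $G$. Theorem~\ref{thm:difd}(a) applied to $G$ only yields $1/\|\ssig^{-1}_{I_G}\|^*$, which by compatibility and $I_F\subseteq I_G$ is the \emph{smaller} quantity; minimizing over $G\in\faces(F)$ therefore gives at best $1/\|\ssig^{-1}\|^*$, not $1/\|\ssig^{-1}_{I_F}\|^*$.

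In fact the inequality you are trying to establish is false, so no construction can rescue it. Take $\cC=\{\xx\in\R^6\mid \ip{\1}{\xx}=1,\ \xx\ge\0\}$ with the Euclidean norm, $F=\conv\{e_1,\dots,e_5\}$, so $I_F=\{6\}$ and the claimed bound is $1$. With $G=\conv\{e_1,e_2\}\in\faces(F)$ and $H=\conv\{e_3,e_4,e_5,e_6\}$ one computes $\dist(G,\conv(\vertices(\cC)\setminus G))=\dist(G,H)=\sqrt{3}/2<1$, so both $\Phi(F,\cC)$ and $\bar\Phi(F,\cC)$ are at most $\sqrt{3}/2$. Hence the terms $1/\|\ssig^{-1}_{I_F}\|^*$ and $1/\|\1_{I_F}\|^*$ in the corollary do not follow from Theorem~\ref{thm:difd} and are not valid in general; the part of the statement your argument does establish is $\bar\Phi(F,\cC)\ge 1/\|\ssig^{-1}_{I_F^c}\|^*$, which is also the only term needed to recover the Garber--Meshi-type bound $\mu/\mathrm{card}(X^*)^{1/(2\theta)}$ in Proposition~\ref{prop:aff_err.face.simplex}; the companion term $\mu/(n-\mathrm{card}(X^*))^{1/(2\theta)}$ there inherits the same problem.
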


\begin{proof}{Proof of Proposition~\ref{prop:aff_err.face.simplex}.} This readily follows from Proposition~\ref{prop:aff_err.face}, Corollary~\ref{cor.bound.facialdist.std}, and the observation that
\[
\|\1_{I_{F(X^*)}^c}\|_2 = \sqrt{\text{\rm card}(X^*)} \text{ and } \|\1_{I_{F(X^*)}}\|_2 = \sqrt{n-\text{\rm card}(X^*)}.
\]
\end{proof}

\section{Conclusion}

We provide a template to derive affine-invariant convergence rates of the following popular versions of the Frank-Wolfe algorithm to minimize a differentiable convex function on a polytope $\cC$: vanilla, away-step, blended pairwise, and in-face.

Our template shows a convergence rate $\cO(t^{1/(2\theta-1)})$ for all of these versions of the Frank-Wolfe algorithm when a suitable {\em curvature property} and {\em error bound property of degree $\theta\in [0,1/2]$} hold. 
In the special case  $\theta = 1/2$, the expression $\cO(t^{1/(2\theta-1)})$ is to be interpreted as linear rate of convergence.  Our convergence results can also be extended to other versions of the Frank-Wolfe algorithm on polytopes that we did not discuss in this paper such as the fully-corrective and pairwise versions.

The curvature and error bound properties are both affine-invariant.  The error bound property is relative to an affine invariant distance-like function $\d: \cC\times \cC \rightarrow [0,1]$.  The suitable choice of distance function $\d$ for the affine-invariant convergence analysis of each version of the Frank-Wolfe algorithm is as follows: 
\begin{itemize}
\item {\em Radial distance} for vanilla Frank-Wolfe.
\item {\em Vertex distance} for away-step Frank-Wolfe and blended pairwise Frank-Wolfe.
\item {\em Face distance} for in-face Frank-Wolfe.
\end{itemize}

We also provide some sufficient conditions for the error bound property in terms of a H\"olderian error bound of the objective function and the following two geometric measures associated to the faces of the polytope $\cC$: the {\em inner facial distance}, and the {\em outer facial distance} from a face of $\cC$.

In addition to providing a unified derivation of convergence rates, our template recovers and extends previously known (albeit affine dependent) convergence results that had been derived separately for different versions of the Frank-Wolfe algorithm.

\subsection*{Acknowledgements}

Research reported in this paper was partially supported by the Deutsche Forschungsgemeinschaft (DFG, German Research Foundation) under Germany's Excellence Strategy – The Berlin Mathematics Research Center MATH$^+$ (EXC-2046/1, project ID 390685689, BMS Stipend).  Research reported in this paper was also partially supported by the Bajaj Family Chair at the Tepper School of Business, Carnegie Mellon University.

\bibliographystyle{plain}







\end{document}